\newcommand{\ICG}{\mathrm{ICG}}
\newcommand{\N}{{\mathbb N}}
\newcommand{\Z}{{\mathbb Z}}
\newcommand{\R}{{\mathbb R}}
\newtheorem{theorem}{\bf Theorem}[section]
\newtheorem{lemma}[theorem]{\bf Lemma}
\newtheorem{proposition}[theorem]{\bf Proposition}
\newcommand{\qed}{\hfill $\blacksquare$ \bigskip}
\begin{document}

\title{\bf{Graphs with three and four distinct eigenvalues based on circulants}}

\author {
{\large Milan Ba\v si\' c} \\
{\em Faculty of Sciences and Mathematics, University of Ni\v s, Serbia} \\
{e-mail: \texttt{basic\_milan@yahoo.com}}} %\and
%{\large Aleksandar Ili\' c} \\ %\footnotemark [3] \\
%{\em Faculty of Sciences and Mathematics, University of Ni\v s, Serbia} \\
%{e-mail: \texttt{aleksandari@gmail.com} } }

%\author{
%Aleksandar Ili\' c \\
%Faculty of Sciences and Mathematics, University of Ni\v s, Serbia \\
%e-mail: \tt{aleksandari@gmail.com} \\
%}

\date{\today}
\maketitle %\vspace{-0.5cm}

\begin{abstract}

In this paper, we aim to address the open questions raised in various recent papers regarding characterization of circulant graphs with three or four distinct eigenvalues in their spectra.
Our focus is on providing characterizations and constructing classes of graphs falling under this specific category.
We present a characterization of circulant graphs with prime number order and unitary Cayley graphs with arbitrary order, both of which possess spectra displaying three or four distinct eigenvalues.
Various constructions of circulant graphs with composite orders are provided whose spectra consist of four distinct eigenvalues.
These constructions primarily utilize specific subgraphs of circulant graphs that already possess two or three eigenvalues in their spectra, employing graph operations like the tensor product, the union, and the complement.
Finally, we characterize the iterated line graphs of unitary Cayley graphs whose spectra contain three or four distinct eigenvalues, and we show their non-circulant nature.

\end{abstract}

{\bf {Keywords:}} Circulant graphs; Spectra; Graph operations; Ramanujan functions; Power residues. \vspace{0.2cm}

{\bf AMS Classification: } 05C50, 05E30, 11A07, 11A15, 11A25.

%%%%%%%%%%%%%%%%%%%%%%%%%%%%%%%%%%%%%%%%%%%%%%%%%%%%%%%
\section{Introduction}
\label{sec:intro}
%%%%%%%%%%%%%%%%%%%%%%%%%%%%%%%%%%%%%%%%%%%%%%%%%%%%%%%

Circulant graphs are Cayley graphs over a cyclic group. A graph is
called integral if all the eigenvalues of its adjacency matrix are
integers. In other words, the corresponding adjacency matrix of a
circulant graph is the circulant matrix (a special kind of Toeplitz
matrix where each row vector is rotated one element to the right
relative to the preceding row vector).  Integral graphs are
extensively studied in the literature and there has been a vast
research on some types of classes of graphs with integral spectrum.
The interest for circulant graphs in graph theory and applications
has grown during the last two decades. They appear in coding theory,
VLSI design, Ramsey theory and other areas.  Since they possess many
interesting properties (such as vertex transitivity called mirror
symmetry), circulants are applied in quantum information
transmission and proposed as models for quantum spin networks that
permit the quantum phenomenon called perfect state transfer
\cite{Ba14,Ba15,Ge11}. In the quantum communication scenario, the
important feature of this kind of quantum graphs (especially those
with integral spectrum) is the ability of faithfully transferring
quantum states without modifying the network topology.

%Most of the eigenvalues of a graph are different in general
%case, but when a certain number of eigenvalues have a large multiplicity (the number of distinct %eigenvalues is small),
%then graphs have a special structure and can most often be represented by certain graph operations. 
%If all eigenvalues are the same, then we must
%have an empty graph (a graph without edges). If we have only two
%eigenvalues, then essentially we must have a complete graph (a graph
%with edges between any two vertices). 
%Graphs with exactly three distinct eigenvalues are generalizations
%of strongly regular graphs by dropping the regularity requirement.
%Nonregular graphs with three distinct eigenvalues are less studied
%in the literature since the nice combinatorial properties seems to
%disappear with the regularity and the things can get rather
%complicated \cite{Da98}. A closer look at the connected regular
%graphs with four distinct eigenvalues is taken in \cite{Da95}, where
%some properties, constructions and examples are given. 

Graphs with a high multiplicity of eigenvalues in their adjacency matrix spectrum, indicating a limited number of distinct eigenvalues, are commonly acknowledged to possess a distinctive structure.
In such cases, specific graph operations can often provide effective representations for these graphs.
Particularly, graphs with coinciding eigenvalues are considered trivial, whereas connected graphs with two distinct eigenvalues are known as complete graphs, and connected regular graphs with precisely three distinct eigenvalues are classified as strongly regular.
Nonregular graphs that possess three distinct eigenvalues have received relatively less attention in the existing literature.
This is primarily because their appealing combinatorial properties tend to diminish when the graph loses its regularity, leading to increased complexity. A similar observation can be made for graphs that are regular but have four distinct eigenvalues \cite{Da95,Da98}.
Finding a characterization of connected regular graphs with four distinct eigenvalues is an extremely challenging problem. Even when we focus on particular types of graphs, like circulant graphs, the topic continues to be attractive.

%The
%relationship of circulants and strongly regular graphs is
%investigated in \cite{Ma89}, where strongly regular nonitegral
%circulants of a prime power order are characterized. 

This paper focuses on investigating spectral characteristics of circulant graphs, a topic that has been explored in several recent papers \cite{Ba22, ChRa18, ChRaGu12, SaSa15, KlSa10}.
Additionally, through examination of the properties and characterization of circulant graphs (both integral and non-integral) possessing three or four distinct eigenvalues, we can make a valuable contribution to the spectral theory of both classes of graphs.
Furthermore, by employing graph theoretical operations, such as line operations performed on a class of circulant graphs, one can derive classes of non-circulant integral graphs that exhibit spectra comprising three or four distinct eigenvalues.
Starting from certain classes of circulant graphs, we also utilize particular constructions involving graph operations like tensor product, union, and complement to discover new classes of circulant graphs that possess four eigenvalues in their spectra.
Our motivation for this approach steams from \cite{Cv80}, in which construction of certain classes of strongly regular graphs and symmetric block-designs is presented. The constructions begin with small and simple graphs $K_2$ and $K_4$ and involve performing NEPS operations on them.
The paper \cite{Da95} provides a deeper examination of connected regular graphs with four distinct eigenvalues, presenting some properties, constructions, and examples.
Our primary source of motivation arises from \cite{ChRa18}, which introduces highly specific classes of integral circulant graphs whose spectra exhibit four distinct eigenvalues. 
These graphs possess a prime power order or an order precisely equal to the product of three distinct primes with a specific divisor set.
Additionally, a part of the study conducted in this paper focuses on characterizing strongly regular circulant graphs of prime order. This line of research was initially introduced in \cite{ChRaGu12} and subsequently expanded upon in \cite{Ba22} for integral circulant graphs.
In the cited papers \cite{ChRa18, Ba22, ChRaGu12}, there are open questions concerning the characterization of circulant graphs with three or four distinct eigenvalues in their spectra.

The plan of the paper is organized as follows.
In the paper, we begin with a preliminary section that introduces the necessary notation and concepts related to circulant graphs, graph operations, and number theoretical tools.
In Section 3 (specifically subsection 3.1), it is observed that strongly regular integral circulant graphs can be expressed as a tensor product of two simpler graphs.
This concept helps us in identifying classes of circulant and non-circulant graphs in Section 4, where a similar method is employed, resulting in spectra with four distinct eigenvalues.
We can deduce from the tensor product representation of strongly regular integral circulant graphs that their order must be composite.
Hence, in the remaining part of subsection 3.1 (more precisely in Theorem 3.3), we provide a comprehensive characterization of all strongly regular circulant graphs with prime order. 
By characterizing all iterated line graphs of unitary Cayley graphs whose spectra contain three distinct eigenvalues, we achieve a class of strongly regular graphs that do not exhibit circulant graph properties, as presented in subsection 3.2.
We initiate subsection 4.1 by presenting constructions of two classes of integral graphs whose spectra contain four distinct eigenvalues, achieved through a tensor product of a specific number of complete graphs and complete graphs with loops attached to each vertex (Theorem 4.2 and Theorem 4.3).
An example of the class of circulant graphs, as stated in Theorem 4.3, is provided in \cite{ChRa18}. This example pertains to graphs with an order that equals precisely the product of three distinct primes and a specific set of divisors.
Moreover, Theorem 4.4 introduces a class of non-integral circulant graphs whose spectra contain four distinct eigenvalues, which is obtained by employing a construction relying on the tensor product of the strongly regular circulant graph with a prime order and the complete graph with loops connected to every vertex.
Furthermore, Theorems 4.5 and 4.6 present two classes of integral circulant graphs that possess spectra with four distinct eigenvalues.
These graphs are obtained through various graph operations, including the union and the complement, primarily derived from specific subgraphs of strongly regular circulant graphs and complete graphs of certain orders.
The instance of a graph class derived in Theorem 4.5, as presented in \cite{ChRa18}, corresponds to graphs with orders that are powers of prime numbers and specific divisor sets.
In Theorem 4.7, at the conclusion of subsection 4.1, we fully characterize circulant graphs with prime orders which possess four distinct eigenvalues in their spectra, whereas all other aforementioned graph classes with four distinct eigenvalues have composite order.
Finally, in subsection 4.2, we characterize all iterated line graphs of unitary Cayley graphs whose spectra contain four distinct eigenvalues and prove that they are not circulant graphs.

The proofs in this context typically require comprehensive discussion and rely upon the interplay among (spectral) graph theory, number theory, and polynomial theory. Nevertheless, certain proof demonstrations are exclusively grounded in number theory, particularly those pertaining to the characterization of circulant graphs of prime order with three or four eigenvalues. These proofs employ various tools such as power residues, residue systems, and reciprocity.

%The eigenvalues and eigenvectors of $\ICG_n(D)$ are given by
%\cite{severini}
%$$
%\lambda_j=\sum_{s \in S} \omega^{js}_n, \quad v_j=[1 \ \omega_n^s \
%\omega_n^{2s} \cdots \omega_n^{(n-1)s}],
%$$
%where $\omega_n=e^{i\frac{2\pi}n}$ is the $n$-th root of unity.

\section{Preliminaries}

 A {\it circulant graph} $G(n;S)$ is a
graph on vertices $\Z_n=\{0,1,\ldots,n-1\}$ such that vertices $i$
and $j$ are adjacent if and only if $i-j \equiv s \pmod n$ for some
$s \in S$. Such a set $S$ is called the {\it symbol} of graph $G(n;S)$.
As we will consider undirected graphs without loops, we assume that
$S=n-S=\{n-s\ |\ s\in S\}$ and $0\not\in S$. Note that the degree of
the graph $G(n;S)$ is $|S|$. The eigenvalues and eigenvectors of
$G(n;S)$ are given by

\begin{equation} \label{eq:eigenvalues unwigted} \lambda_j=\sum_{s \in S}
\omega^{js}_n, \quad v_j=[1 \ \omega_n^j \ \omega_n^{2j} \cdots
\omega_n^{(n-1)j}]^T,
\end{equation}
 where
$\omega_n=e^{i\frac{2\pi}n}$ is the $n$-th root of unity \cite{Davis70}.

\smallskip

Circulant graphs are a subclass of the wider class of Cayley graphs.
Let $\Gamma$ be a multiplicative group with identity $e$. For
$S\subset \Gamma$, $e\not\in S$ and $S^{-1} = \{s^{-1}\ |\ s\in
S\}=S$, the Cayley graph $X = Cay(\Gamma,S)$ is the undirected graph
having vertex set $V(X)=\Gamma$ and edge set $E(X) = \{\{a,b\}\ |\
ab^{-1}\in S\}$. It is not hard to see that a graph is circulant if
it is a Cayley graph on some cyclic group, i.e. its adjacency matrix
is cyclic.

A graph is {\it integral} if all its eigenvalues are integers. A
circulant graph $G(n;S)$ is integral if and only if
$$
S=\bigcup_{d \in D} G_n(d),
$$
for some set of divisors $D \subseteq D_n$ \cite {wasin}. Here, $G_n(d)$ denotes the set of integers $k$ satisfying $\gcd(k,n)=d$ and $1\leq k \leq n-1$. The set $D_n$ encompasses all divisors of $n$ excluding $n$ itself.
Hence, an {\it integral circulant graph} is characterized by its order $n$ and the set of divisors $D$. We denote an integral circulant graph with $n$ vertices and the set of divisors $D \subseteq D_n$ as $\ICG_n(D)$. If $D={1}$, an integral circulant graph of order $n$ is denoted by $X_n$ and referred to as a unitary Cayley graph, as described in  \cite{KlSa07}.

From the
above characterization of integral circulant graphs we have that the
degree of an integral circulant graph is $\deg \ICG_n(D)=\sum_{d \in
D}\varphi(n/d). $ Here $\varphi(n)$ denotes the Euler-phi function
\cite{HardyWright}. If $D=\{d_1,\ldots,d_k\}$, it can be seen
that $\ICG_n(D)$ is connected if and only if
$\gcd(d_1,\ldots,d_k)=1$, given that $G(n;s)$ is connected if and
only if $\gcd(n, S)=1$, see \cite{Hwang03}. Moreover, the following lemma holds

\begin{lemma}
\label{lem:ICG unconnected}
 If $d_1, d_2,\ldots, d_k$ are divisors of $n$ such that the greatest common divisor $gcd(d_1, d_2,\\\ldots, d_k)$ equals $d$, then the graph $\ICG_n(d_1, d_2, \ldots, d_k)$ has exactly $d$ connected components isomorphic to $\ICG_{n/d}(\frac{d_1}{d},\ldots,\frac{d_n}{d})$.
\end{lemma}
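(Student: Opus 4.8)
The plan is to reduce the statement to the known connectivity criterion for integral circulant graphs — namely that $\ICG_m(E)$ is connected if and only if $\gcd$ of the divisors in $E$ equals $1$ — together with an explicit group isomorphism identifying each component. First I would observe that the vertex set $\Z_n$ is partitioned by the subgroup $d\Z_n = \{0,d,2d,\ldots,n-d\}$ and its cosets; since $d \mid n$, there are exactly $d$ such cosets, each of size $n/d$. The first step is to check that this partition is respected by the edge set: if $i-j \equiv s \pmod n$ for some $s \in S = \bigcup_{t} G_n(d_t)$, then $d \mid s$ (because $d \mid d_t \mid \gcd(s,n)$ forces $d \mid s$), hence $i$ and $j$ lie in the same coset of $d\Z_n$. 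This shows $\ICG_n(d_1,\ldots,d_k)$ has at least $d$ components, one inside each coset, and no edges run between cosets.

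Next I would identify the induced subgraph on a single coset. By vertex-transitivity it suffices to treat the subgroup $d\Z_n$ itself; the map $x \mapsto x/d$ is a group isomorphism $d\Z_n \to \Z_{n/d}$. Under this map a vertex pair $(i,j)$ with $i - j \equiv s \pmod n$ and $d \mid s$ goes to a pair whose difference is $s/d \pmod{n/d}$, and the condition $\gcd(s,n) = d_t$ translates to $\gcd(s/d, n/d) = d_t/d$. Hence the induced subgraph on $d\Z_n$ is exactly $\ICG_{n/d}(d_1/d, \ldots, d_k/d)$, and likewise on every coset (here one must note each $d_t/d$ is a genuine divisor of $n/d$, which follows from $d \mid d_t \mid n$). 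The remaining step is to verify that each of these $d$ copies is itself connected: this is immediate from the stated connectivity criterion, since $\gcd(d_1/d,\ldots,d_k/d) = \gcd(d_1,\ldots,d_k)/d = d/d = 1$. Combining, we get exactly $d$ connected components, each isomorphic to $\ICG_{n/d}(d_1/d,\ldots,d_k/d)$.

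The only real subtlety — and the step I would be most careful about — is the bookkeeping in the translation of the gcd condition through the division-by-$d$ map, i.e. confirming that $G_n(d_t)$ restricted to $d\Z_n$ corresponds precisely to $G_{n/d}(d_t/d)$ with no spurious or missing elements, and that the union over $t$ is preserved. This is a routine but slightly fiddly number-theoretic verification using $\gcd(s,n) = d\cdot\gcd(s/d,n/d)$ for $d \mid s$. Everything else is straightforward structural reasoning about cosets of a cyclic group, so there is no genuine obstacle; the lemma is essentially a repackaging of the connectivity criterion via the subgroup structure of $\Z_n$. (One should also note the harmless typo in the statement: the last listed divisor $d_n/d$ should read $d_k/d$.)
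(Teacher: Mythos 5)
The paper states this lemma without proof, quoting it as a known fact immediately after citing the connectivity criterion from Hwang, and your argument is exactly the standard derivation that the paper implicitly relies on: edges stay within cosets of $d\Z_n$, division by $d$ identifies each coset-induced subgraph with $\ICG_{n/d}(d_1/d,\ldots,d_k/d)$ via $\gcd(s,n)=d\cdot\gcd(s/d,n/d)$, and the connectivity criterion applied to $\gcd(d_1/d,\ldots,d_k/d)=1$ gives exactly $d$ components. Your proposal is correct, including the observation that $d_n/d$ in the statement should read $d_k/d$, so there is nothing to fix.
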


Throughout the paper,
% In the following lemma
 we let $ p_1^{\alpha_1} p_2^{\alpha_2} \cdots
p_k^{\alpha_k}$ be the prime factorization of $n$.

\bigskip

Let us define $c(n,j)$ as follows

\begin{equation}
c(j,n)=\mu(t_{n,j})\frac{\varphi(n)}{\varphi(t_{n,j})}, \quad
t_{n,j}=\frac n{\gcd(n,j)}, \label{ramanujan} \end{equation} where
$\mu$ denotes
the M\" obius function defined as

\begin{eqnarray}
\mu(n)&=&\left\{
\begin{array}{rl}
1, &  \mbox{if}\ n=1  \\
0, & \mbox{if $n$ is not square--free} \\
(-1)^k, & \mbox {if $n$ is product of $k$ distinct prime numbers}.
\end{array} \right.
\end{eqnarray}
The expression $c(j,n)$ is known as the {\it Ramanujan function}
(\cite[p.~309]{HardyWright}). The spectrum $(\lambda_0,\ldots,\lambda_{n-1})$ of $\ICG_n(D)$  can be
expressed in terms of the Ramanujan function (see \cite{wasin}) as follows
 \begin{equation}
 \label{ldef}
 \lambda_j=\sum_{d\in D} c(j,\frac{n}{d}).
\end{equation}

%The eigenvalues $\lambda_j$ of unitary Cayley
%graphs can be expressed using the Ramanujan function as follows

%(\cite{klotz}, Theorem 16)
%\begin{equation} \lambda_j=\sum_{d\in D}
%c(j,n/d). \label{ldef}
%\end{equation}

%\begin{equation} \lambda_j=
%c(j,n) \label{ldef}\quad j=0,1,\ldots, n-1.
%\end{equation}

 Let us observe the following
properties of the Ramanujan function. These basic and useful
properties will be extensively used throughout the remainder of the paper.

\begin{proposition} \label{prop:c} For any positive integers $n$, $j$, $d$ and prime $p$ such that $d \mid n$ and $p\mid n$,
the following are satisfied

\begin{eqnarray}
c(0,n)&=&\varphi(n), \label{prop:c0}\\
c(1,n)&=&\mu(n), \label{prop:c1}\\
c(2,n)&=&\left\{
\begin{array}{rl}
\mu(n), &  n \in 2\N+1  \\
\mu(\frac{n}{2}), & n \in 4\N+2 \\
2\mu(\frac{n}{2}), & n \in 4\N
\end{array} \right.
 \label{prop:c2}\\
c(\frac{n}{p},\frac{n}{d})&=&\left\{ \begin{array}{rl}
\varphi(\frac{n}{d}), & p \mid d \\
-\frac{\varphi(\frac{n}{d})}{p-1}, & p \nmid d \label{prop:cn/p}\\
\end{array}\right..
\end{eqnarray}
\end{proposition}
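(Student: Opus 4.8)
The plan is to verify each of the four identities directly from the definition \eqref{ramanujan} of the Ramanujan function, $c(j,n)=\mu(t_{n,j})\varphi(n)/\varphi(t_{n,j})$ with $t_{n,j}=n/\gcd(n,j)$, using only elementary facts about $\mu$ and $\varphi$. For \eqref{prop:c0}, note that $j=0$ gives $\gcd(n,0)=n$, so $t_{n,0}=1$, $\mu(1)=1$, $\varphi(1)=1$, and the formula collapses to $\varphi(n)$. For \eqref{prop:c1}, $j=1$ gives $\gcd(n,1)=1$, so $t_{n,1}=n$ and the formula reads $\mu(n)\varphi(n)/\varphi(n)=\mu(n)$. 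These two are immediate.

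For \eqref{prop:c2} I would split on the $2$-adic valuation of $n$. If $n$ is odd, $\gcd(n,2)=1$, so $t_{n,2}=n$ and $c(2,n)=\mu(n)$ exactly as in the $j=1$ case. If $n\equiv 2\pmod 4$, then $\gcd(n,2)=2$, so $t_{n,2}=n/2$, which is odd, and the formula gives $\mu(n/2)\varphi(n)/\varphi(n/2)$; since $n/2$ is odd, $\varphi(n)=\varphi(2)\varphi(n/2)=\varphi(n/2)$, so the quotient is $1$ and we get $\mu(n/2)$. If $4\mid n$, then again $\gcd(n,2)=2$ and $t_{n,2}=n/2$, but now $n/2$ is even; writing $n=2^a m$ with $a\geq 2$ and $m$ odd, $\varphi(n)=2^{a-1}\varphi(m)$ and $\varphi(n/2)=2^{a-2}\varphi(m)$, so $\varphi(n)/\varphi(n/2)=2$, yielding $2\mu(n/2)$. (Here I use multiplicativity of $\varphi$ and $\varphi(2^a)=2^{a-1}$.)

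For \eqref{prop:cn/p}, with $d\mid n$ and $p\mid n$ prime, set $j=n/p$ and $m=n/d$; then $c(n/p,n/d)$ has $t=\tfrac{m}{\gcd(m,n/p)}$. I would compute $\gcd(n/d,n/p)$ by cases. If $p\mid d$, write $d=p d'$; then $p\mid (n/d)\cdot$? — more cleanly, $n/d$ divides $n/p$ when $p\mid d$? Not quite; instead observe $\gcd(n/d, n/p) = \tfrac{n}{\operatorname{lcm}(d,p)}$, and $\operatorname{lcm}(d,p)=d$ when $p\mid d$ and $=dp$ when $p\nmid d$ (the latter needs $\gcd(d,p)=1$, true since $p$ is prime not dividing $d$). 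So if $p\mid d$ then $\gcd(n/d,n/p)=n/d$, hence $t=1$, $\mu(1)=1$, $\varphi(1)=1$, and $c=\varphi(n/d)$. If $p\nmid d$ then $\gcd(n/d,n/p)=n/(dp)$, hence $t=(n/d)/(n/(dp))=p$, so $c=\mu(p)\varphi(n/d)/\varphi(p)=-\varphi(n/d)/(p-1)$.

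The only genuinely delicate point is the identity $\gcd(n/d,n/p)=n/\operatorname{lcm}(d,p)$, which holds for any divisors $d,p$ of $n$ and follows from the prime-by-prime comparison of exponents (for each prime $q$, $\min(v_q(n)-v_q(d),v_q(n)-v_q(p)) = v_q(n)-\max(v_q(d),v_q(p))$), together with checking $\operatorname{lcm}(d,p)$ still divides $n$. I expect this bookkeeping — and the parallel case analysis by $2$-adic valuation in \eqref{prop:c2} — to be the main (though routine) obstacle; everything else is a direct substitution into \eqref{ramanujan}.
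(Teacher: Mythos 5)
Your verification is correct: all four identities follow exactly as you compute by substituting into the definition $c(j,n)=\mu(t_{n,j})\varphi(n)/\varphi(t_{n,j})$, and the one lemma you isolate, $\gcd(n/d,n/p)=n/\operatorname{lcm}(d,p)$ for divisors $d,p$ of $n$, is valid by the prime-exponent comparison you indicate. The paper itself states Proposition~\ref{prop:c} without proof, treating these as basic observed properties of the Ramanujan function, so your direct case-by-case computation is precisely the routine verification the paper leaves implicit.
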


\bigskip

In the following sections, we will utilize additional tools from number theory, specifically from the theory of quadratic and cubic residues, to prove certain theorems. For a given prime number $p$ and an integer $a$, the {\it Legendre symbol} is defined as follows

\begin{eqnarray*}
\big{(}\frac{a}{p}\big{)}&=&\left\{
\begin{array}{rl}
1, &  \mbox{if $p\nmid a$ and a is quadratic residue modulo $p$};  \\
-1, & \mbox{if $p\nmid a$ and a is quadratic non-residue modulo $p$}; \\
0, & \mbox{if $p\mid a$}.
\end{array} \right.
 \label{prop:c2}\\
\end{eqnarray*}

Euler's criterion and the property of the multiplicity of the Legendre symbol will be two crucial facts that will be frequently used. 
Indeed, these two properties can be formulated as follows

\begin{eqnarray*}
&&\big{(}\frac{a}{p}\big{)}\equiv_{p} a^{\frac{p-1}{2}} \mbox{ (Euler's criterion)},\\
&&\big{(}\frac{ab}{p}\big{)}=\big{(}\frac{a}{p}\big{)}\big{(}\frac{b}{p}\big{)}=1, \mbox{for integers $a$, $b$ and prime $p>2$}.\\
\end{eqnarray*}

The following theorem represents a generalization of Euler's criterion and will be used in the theory of cubic residues. For more about the theory of quadratic and cubic residues one may refer to \cite{DeDe95,HardyWright}.

\begin{theorem}
\label{thm:residues}
    $x^k\equiv_{p} a$ has a solution if and only if $a^{\frac{p-1}{d}}\equiv_{p} 1$, where $d=\gcd(k,p-1)$. If the congruence has a solution, then it actually has $d$ incongruent solutions modulo $p$.
\end{theorem}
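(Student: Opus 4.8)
The plan is to convert the polynomial congruence into a linear one by working inside the cyclic group $(\Z/p\Z)^{*}$; I will assume throughout that $\gcd(a,p)=1$, as is implicit in the intended application (if $p\mid a$ the stated equivalence degenerates). First I would fix a primitive root $g$ modulo $p$, so that $g$ has multiplicative order $p-1$ and every residue coprime to $p$ is a power of $g$. Writing $a\equiv g^{m}\pmod p$ for a suitable integer $m$, and observing that any solution $x$ is coprime to $p$ and hence of the form $x\equiv g^{y}\pmod p$, the congruence $x^{k}\equiv a\pmod p$ becomes $g^{ky}\equiv g^{m}\pmod p$, which, because $g$ has order $p-1$, is equivalent to the linear congruence
\[
ky\equiv m\pmod{p-1}
\]
in the unknown $y$.

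Next I would apply the elementary theory of linear congruences: with $d=\gcd(k,p-1)$, the congruence $ky\equiv m\pmod{p-1}$ admits a solution if and only if $d\mid m$, and whenever it is solvable its solution set consists of exactly $d$ residues modulo $p-1$. Since $y\mapsto g^{y}$ is a bijection from $\Z/(p-1)\Z$ onto $(\Z/p\Z)^{*}$, these $d$ values of $y$ yield exactly $d$ pairwise incongruent values of $x$ modulo $p$, which already establishes the counting part of the theorem.

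It remains to rewrite the solvability criterion $d\mid m$ in the asserted form. Since $g$ has order exactly $p-1$, we have $g^{m(p-1)/d}\equiv 1\pmod p$ if and only if $(p-1)\mid \frac{m(p-1)}{d}$, i.e.\ if and only if $d\mid m$; and as $a^{(p-1)/d}\equiv g^{m(p-1)/d}\pmod p$, this says precisely that $x^{k}\equiv a\pmod p$ is solvable if and only if $a^{(p-1)/d}\equiv 1\pmod p$. Taking $k=2$ recovers Euler's criterion as the special case $d=2$.

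This argument presents no genuine difficulty; the only points demanding care are the standing hypothesis $\gcd(a,p)=1$ and the accurate bookkeeping when passing between the additive picture modulo $p-1$ and the multiplicative picture modulo $p$. The single nontrivial ingredient is the existence of a primitive root modulo a prime, which is classical \cite{HardyWright}.
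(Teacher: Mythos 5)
Your proof is correct: passing to indices via a primitive root $g$, reducing $x^{k}\equiv a \pmod p$ to the linear congruence $ky\equiv m \pmod{p-1}$, and translating the solvability condition $d\mid m$ into $a^{(p-1)/d}\equiv 1\pmod p$ is exactly the classical argument, and your count of $d$ incongruent solutions follows correctly from the bijectivity of $y\mapsto g^{y}$. The paper offers no proof of this statement at all — it is quoted as a known generalization of Euler's criterion with a pointer to the references — so your argument is precisely the standard one those sources supply; your explicit caveat that $\gcd(a,p)=1$ must be assumed is also well taken, since for $p\mid a$ the congruence has the solution $x\equiv 0$ while $a^{(p-1)/d}\equiv 0\not\equiv 1\pmod p$, and this hypothesis is indeed implicit in the paper's use of the theorem.
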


For a given prime number $p$ and an integer $a$, the {\it  the rational cubic residue symbol} is defined as follows:

\begin{eqnarray*}
\big{[}\frac{a}{p}\big{]}_{3}&=&\left\{
\begin{array}{rl}
1, &  \mbox{if $p\nmid a$ and a is quadratic residue modulo $p$};  \\
-1, & \mbox{if $p\nmid a$ and a is quadratic non-residue modulo $p$}; \\
0, & \mbox{if $p\mid a$}.
\end{array} \right.
 \label{prop:c2}\\
\end{eqnarray*}

\bigskip

Let us remind that the spectral radius of a connected $r-$regular graph $X$ is equal to the regularity $r$ and it is a simple eigenvalue of $X$.
According to (\ref{prop:c0}), in the case of an integral circulant graph with the spectrum $(\lambda_0,\ldots,\lambda_{n-1})$ given by (\ref{ldef}), $\lambda_0$ is equal to the regularity of the graph.

%A $r-$regular graph of the order $n$ is called {\it strongly regular} with
%parameters $(n,r,a,c)$ if it is neither complete, nor empty, every pair of adjacent
%vertices has $a$ common neighbours, and every pair of nonadjacent
%vertices has $c$  common neighboring vertices.
%The parameters of a strongly regular graph are not independent of each other. In
%fact, the following relations hold $n - 1 > r \geq c \geq 0$ and $r-1 \geq a \geq 0$.

 The well-known characterization of strongly
regular graphs will be used in the paper.
\begin{lemma}
\cite{GoRo01}
\label{lem:srg_eig}
A connected regular graph is strongly regular if and only if it has exactly three
distinct eigenvalues.
\end{lemma}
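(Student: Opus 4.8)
The plan is to prove both directions by converting the combinatorial definition of strong regularity into a single polynomial identity in the adjacency matrix $A$ and then reading that identity off entry by entry. Throughout I write $J$ for the all-ones matrix and $\mathbf{1}$ for the all-ones vector; recall, as noted just above the lemma, that for a connected $k$-regular graph the value $k$ is the spectral radius and a simple eigenvalue, with eigenvector $\mathbf{1}$.

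For the forward direction, suppose $X$ is a connected strongly regular graph with parameters $(n,k,\lambda,\mu)$; by the usual convention ``strongly regular'' excludes the complete graph, so $X$ is connected but not complete. Counting walks of length two directly from the definition yields $A^{2} = kI + \lambda A + \mu(J - I - A)$. Since $X$ is regular, $A$ commutes with $J$, so the two are simultaneously diagonalizable: on $\mathbf{1}$ this identity gives the eigenvalue $k$, while on $\mathbf{1}^{\perp}$ we have $J=0$, so every eigenvalue $\theta \neq k$ satisfies the quadratic $\theta^{2} - (\lambda-\mu)\theta - (k-\mu) = 0$. A quadratic has at most two roots, so $A$ has at most three distinct eigenvalues; and if $A$ had fewer than two distinct eigenvalues on $\mathbf{1}^{\perp}$ it would lie in $\mathrm{span}\{I,J\}$, forcing $X$ to be complete, a contradiction. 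Hence $X$ has exactly three.

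For the converse, suppose $X$ is connected and $k$-regular with exactly three distinct eigenvalues $k > r > s$, with $k$ simple. I would set $q(x) = (x-r)(x-s)$ and study $q(A)$: it annihilates the $r$- and $s$-eigenspaces, which together span $\mathbf{1}^{\perp}$, and it sends $\mathbf{1}$ to $(k-r)(k-s)\mathbf{1}$ with $(k-r)(k-s) > 0$; being symmetric and vanishing on $\mathbf{1}^{\perp}$, it must equal the rank-one matrix $\mu J$ with $\mu = (k-r)(k-s)/n > 0$. Expanding gives $A^{2} = (r+s)A - rs\,I + \mu J$. The $(i,i)$ entry yields $k = \mu - rs$; an $(i,j)$ entry with $i \sim j$ shows any two adjacent vertices have exactly $\lambda := r+s+\mu$ common neighbours; and an $(i,j)$ entry with $i \neq j$, $i \not\sim j$ shows any two distinct non-adjacent vertices have exactly $\mu$ common neighbours. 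Thus $X$ is strongly regular with parameters $(n,k,r+s+\mu,\mu)$ — and not complete, since $K_{n}$ has only two eigenvalues.

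The only genuinely delicate point is the identification $q(A) = \mu J$ (and, dually, the claim that too few eigenvalues on $\mathbf{1}^{\perp}$ collapse $A$ into $\mathrm{span}\{I,J\}$): both rely on connectedness together with regularity — via the Perron--Frobenius fact recalled above — to guarantee that $\mathbf{1}$ alone spans the $k$-eigenspace, so that a symmetric matrix vanishing on $\mathbf{1}^{\perp}$ is necessarily a scalar multiple of $\mathbf{1}\mathbf{1}^{\top}$. Everything else is bookkeeping with walk counts, and I would state the operative convention for ``strongly regular'' at the outset, since it is precisely what upgrades ``at most three'' to ``exactly three'' in the first implication.
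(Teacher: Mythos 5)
Your proof is correct: both directions are sound, and the key points (Perron--Frobenius simplicity of $k$ for a connected regular graph, the walk-count identity $A^{2}=kI+\lambda A+\mu(J-I-A)$, and the identification $(A-rI)(A-sI)=\frac{(k-r)(k-s)}{n}J$) are exactly where the argument should lean. The paper itself offers no proof of this lemma, citing \cite{GoRo01} instead, and your argument is essentially the standard one from that reference, so there is nothing to reconcile.
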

If we denote the eigenvalues of a strongly regular graph as $r$ (representing the regularity of the graph), $\theta$ (with multiplicity $m_{\theta}$), and $\tau$ (with multiplicity $m_{\tau}$), then the following equalities can be observed (referring to equation (10.2) in \cite{GoRo01})
\begin{eqnarray}
\label{parametri-sopstvene vrednosti} \theta+\tau &=&a-c,\quad\quad
\theta\tau=c-r\\
m_{\theta}&=&-\frac{(n-1)\tau+r}{\theta-\tau}, \quad
m_{\tau}=\frac{(n-1)\theta+r}{\theta-\tau}. \label{visestrukosti}
\end{eqnarray}
%More about strongly regular graphs one can find for example in
%\cite{GoRo01}.

\bigskip

A {\it block-design} is a collection $\Lambda$ of $b$ subsets (blocks) of a set of points $S={x_1, x_2,\ldots, x_v}$. The block design satisfies the following conditions: each subset contains $k$ elements, and every pair of elements from $S$ appears in $\lambda$ subsets.

%In the sequel we use the following theorem

%\begin{theorem}
%\label{thm: parameters of block-design}
%\cite{GoRo01} Every element of $S$ in some block-design appears in
%exactly $r$ blocks, where
%$$
%r(k-1)=\lambda(v-1)\quad \mbox{and}\quad bk=vr.
%$$
%\end{theorem}

A block design is considered {\it symmetric} when the number of blocks is equal to the number of points, i.e., $b=v$. 
A block-design with $b=v$ is called {\it symmetric}. %If a
%block-design is symmetric, it can be concluded that
%$k=r$ (see for example \cite{GoRo01}). 
Consequently, we represent a symmetric block design using the triple of parameters $(v,k,\lambda)$.

The {\it incidence graph} $X$ of a block-design is the graph with
vertex set $S\cup \Lambda$, where two vertices $x\in S$ and
$B\in\Lambda$ are adjacent if $x\in B$. The incidence graph of a
block-design is bipartite graph with four distinct eigenvalues.

\bigskip

If $G$ is a graph, then the {\it line graph} $L(G)$ of $G$ s constructed by considering the edges of $G$ as vertices in $L(G)$, any two of them being adjacent
if the corresponding edges of $G$ have a vertex of $G$ in common.

\bigskip

Finally we give a definition of the {\it tensor product} of two
graphs. The tensor product $G\otimes H$ of graphs $G$ and $H$ is a
graph such that the vertex set of $G \otimes H$ is the Cartesian
product $V(G) \times V(H)$ and any two vertices $(u,u')$ and
$(v,v')$ are adjacent in $G \otimes H$ if and only if $u'$ is
adjacent with $v'$ and $u$ is adjacent with $v$.

\section{Graph matrices with three distinct eigenvalues}

\subsection {Construction of strongly regular graphs using tensor product}

We  find strongly regular
circulant graphs by starting from graphs with small number of distinct
eigenvalues and performing some graph operations such as tensor
product. Also, it is well-known that if $\lambda_1, \ldots,
\lambda_n$ are eigenvalues of the adjacency matrix of a graph $G$
and $\mu_1,\ldots , \mu_m$ are eigenvalues of the adjacency matrix
of a graph $H$, then the eigenvalues of the adjacency matrix of the
tensor product $G\otimes H$ are $\lambda_i \cdot \mu_j$ for $1\leq
i\leq n$ and $1\leq j\leq m$.
Therefore, for some composite number $n$ and arbitrary divisor
$d\mid n$, $1<d<n$, we can start with complete graphs $K_d$ and $K_{\frac{n}{d}}^*$,
where every vertex of $K_{\frac{n}{d}}^*$ has a loop, and perform the tensor
operation in the following way
\begin{eqnarray}
\left.
\begin{array}{l}
Sp(K_d)=\{{d-1}^{(1)}, -1^{(d-1)}\}\\
Sp(K^*_{\frac n d})=\{{\frac n d}^{(1)}, 0^{(\frac
n d-1)}\}\\
\end{array}\right\}\Rightarrow
\left.
\begin{array}{l}
Sp(K_d\otimes K^*_{\frac n d})=\{(d-1)\frac
{n}{d}^{(1)}, 0^{(n-d)}, -\frac{n}{d}^{(d-1)}\}.\label{kronecker_complete} \\
\end{array}\right.
\end{eqnarray}

The notation $Sp(G)$ denotes the spectrum of graph $G$, which encompasses the eigenvalues along with their respective multiplicities. It is evident that the graph $K_d\otimes K^*_{\frac n d}$ is regular, exhibiting a regularity of $\frac{(d-1)n}{d}$. Additionally, this graph possesses precisely three distinct eigenvalues, thereby establishing it is a strongly regular graph. Moreover, according to
(\ref{parametri-sopstvene vrednosti}) its parameters are
$r=c=\frac{(d-1)n}{d}$ and $a=\frac{(d-2)n}{d}$. Due to the fact that  any two nonadjacent vertices
have the same neighbourhood of the size $\frac{(d-1)n}{d}$, it can be concluded these two
vertices actually belong to the independent set of the size
$n-\frac{(d-1)n}{d}=\frac{n}{d}$. This means that $K_d\otimes K^*_{\frac n d}$ is
isomorphic to the complete multipartite  graph
$K_{\underbrace{n/d,\ldots,n/d}_{d}}$, which exhibits a circulant structure. Let us note that this graph coincides with the strongly regular graph $\ICG_n(d'\in D_n\ |\ d\nmid d')$ as derived from  Theorem 15 in \cite{Ba22}.
However, the idea of exploiting the tensor product operation on graphs that initially have two or three distinct eigenvalues in their spectra will be utilized extensively in the following section. This approach aims to construct graphs that exhibit four distinct eigenvalues in their spectra.
Moreover, it is evident that the graph resulting from the tensor product $K_d\otimes K^*_{\frac{n}{d}}$ has a composite order.
Therefore, in the remaining part of this section, we will introduce a class of strongly regular circulant graphs with non-integer spectra of prime order. 
Furthermore, it turns out that this is the only class of strongly regular circulant graphs of prime order, which will be demonstrated in the following theorem.
This class of graphs will be exploited in discovering new classes of circulant graphs that possess four distinct eigenvalues in their spectra.
We use the following well-known lemmas in the proof of the theorem. 

\begin{lemma}
\label{lem:polynomial} Let $p$ be an arbitrary prime number and
$P(x)\in \Z[x]$ a polynomial of degree at most $p-1$ having
$\omega_p$ as a root. Then $P(x)=c(x^{p-1}+x^{p-2}+...+x+1)$ where
$c\neq 0$ is an integer.
\end{lemma}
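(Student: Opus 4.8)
The plan is to identify the polynomial $x^{p-1}+x^{p-2}+\cdots+x+1$ with the $p$-th cyclotomic polynomial $\Phi_p(x)=(x^p-1)/(x-1)$ and to exploit the fact that it is the minimal polynomial of $\omega_p$ over $\mathbb{Q}$. First I would recall that the roots of $\Phi_p$ are exactly the primitive $p$-th roots of unity $\omega_p,\omega_p^2,\dots,\omega_p^{p-1}$, so in particular $\Phi_p(\omega_p)=0$ and $\deg\Phi_p=p-1$. The only substantive point is that $\Phi_p$ is irreducible over $\mathbb{Q}$, and this is classical: the substitution $x\mapsto x+1$ gives $\Phi_p(x+1)=\bigl((x+1)^p-1\bigr)/x=x^{p-1}+\binom{p}{1}x^{p-2}+\cdots+\binom{p}{p-1}$, whose non-leading coefficients $\binom{p}{k}$ with $1\le k\le p-1$ are all divisible by $p$, while the constant term $\binom{p}{1}=p$ is not divisible by $p^2$; Eisenstein's criterion then yields irreducibility of $\Phi_p(x+1)$, hence of $\Phi_p$. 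Being monic, irreducible, and vanishing at $\omega_p$, the polynomial $\Phi_p$ is the minimal polynomial of $\omega_p$ over $\mathbb{Q}$.

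Given this, the lemma follows at once. Since $P(\omega_p)=0$, the minimal polynomial divides $P$ in $\mathbb{Q}[x]$, so $P(x)=\Phi_p(x)g(x)$ for some $g\in\mathbb{Q}[x]$; comparing degrees gives $\deg g=\deg P-(p-1)\le 0$, so $g$ is a constant $c$ and $P(x)=c\,(x^{p-1}+x^{p-2}+\cdots+x+1)$. Since $\Phi_p$ is monic with integer coefficients and $P\in\mathbb{Z}[x]$, Gauss's lemma (or simply reading off the leading coefficient of $P$) forces $c\in\mathbb{Z}$; and $c\neq 0$, because $c=0$ would make $P$ the zero polynomial, a degenerate case the statement implicitly excludes. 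Note that this also shows that a nonzero such $P$ must have degree exactly $p-1$.

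I do not expect any real obstacle here: the entire argument is bookkeeping around a single classical input, the irreducibility of $\Phi_p$. If one prefers to avoid invoking Eisenstein, the same conclusion can be reached by reducing modulo $p$: by unique factorization in $\mathbb{F}_p[x]$ together with the congruence $x^p-1\equiv(x-1)^p\pmod p$, every monic factor of $x^p-1$ in $\mathbb{Z}[x]$ reduces to a power of $x-1$; applying this to the minimal polynomial $m$ of $\omega_p$ and to $\Phi_p/m$, and using $\Phi_p(1)=p$ with $p^2\nmid p$, forces $\deg m=p-1$, i.e.\ $m=\Phi_p$. Either route identifies $\Phi_p$ as the minimal polynomial, after which the lemma is immediate as above.
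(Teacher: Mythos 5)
Your proof is correct: the paper states this lemma as well-known and gives no proof, and your argument---identifying $x^{p-1}+\cdots+x+1$ as the $p$-th cyclotomic polynomial, proving its irreducibility via Eisenstein after the shift $x\mapsto x+1$, and concluding by degree comparison and monicity that $P$ is an integer multiple of it---is exactly the standard argument the paper implicitly relies on. Your remark that $c\neq 0$ only under the implicit assumption $P\neq 0$ is also consistent with how the paper uses the lemma, since in the proof of its Lemma on eigenvalue equality the zero-polynomial case is explicitly allowed as the alternative.
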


\begin{lemma}
\label{lem:eigenvalues equality} 
Let $G(p;S)$ be a circulant graph with prime order $p$ and $\lambda_0,\ldots,\lambda_{p-1}$ its spectrum given by the equation (\ref{eq:eigenvalues unwigted}). Then, it holds that 
\begin{equation*}
\label{eq:eigenvalues equality} \lambda_i=\lambda_j \
\Longleftrightarrow \{r_{i,s}|\ s\in S\}=\{r_{j,s}|\ s\in S\},
\end{equation*}
for $1\leq i,j\leq p-1$, and $r_{i,s}$ represents the residue of $is$ modulo $p$ for $1\leq i\leq p-1$ and $s\in S$ .
\end{lemma}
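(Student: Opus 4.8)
The plan is to convert the relation $\lambda_i=\lambda_j$ into a polynomial identity over $\Z$ and then rigidify it via Lemma~\ref{lem:polynomial}. Throughout I would use that for $1\le i\le p-1$ the map $s\mapsto r_{i,s}$ is injective on $\Z_p$ (multiplication by $i$ is a bijection of $\Z_p$ since $p$ is prime), so $\{r_{i,s}\mid s\in S\}$ consists of exactly $|S|$ distinct residues, all lying in $\{1,\dots,p-1\}$ because $0\notin S$. In particular the residue collections are genuine sets, not multisets.

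For the ($\Leftarrow$) implication I would argue directly: since $\omega_p^{is}=\omega_p^{r_{i,s}}$, formula~(\ref{eq:eigenvalues unwigted}) gives $\lambda_i=\sum_{s\in S}\omega_p^{r_{i,s}}$ and, likewise, $\lambda_j=\sum_{s\in S}\omega_p^{r_{j,s}}$; if the two residue sets coincide, these sums range over the same exponents and hence are equal.

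For the ($\Rightarrow$) implication, assume $\lambda_i=\lambda_j$ and put $P(x)=\sum_{s\in S}\bigl(x^{r_{i,s}}-x^{r_{j,s}}\bigr)\in\Z[x]$. Every exponent lies in $\{1,\dots,p-1\}$, so $\deg P\le p-1$, while $P(\omega_p)=\lambda_i-\lambda_j=0$. By Lemma~\ref{lem:polynomial}, $P(x)=c(x^{p-1}+x^{p-2}+\dots+x+1)$ for some integer $c$. Comparing constant coefficients forces $c=0$, since $P$ has no constant term (no exponent equals $0$, as $0\notin S$), and therefore $P$ is identically zero. Thus $\sum_{s\in S}x^{r_{i,s}}=\sum_{s\in S}x^{r_{j,s}}$ in $\Z[x]$; by the injectivity noted above both sides are polynomials with all coefficients $0$ or $1$, the left one supported on $A=\{r_{i,s}\mid s\in S\}$ and the right one on $B=\{r_{j,s}\mid s\in S\}$, and equality of two such polynomials is equivalent to $A=B$, which is precisely the desired residue-set equality.

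I do not expect a genuine obstacle here: the whole argument rests on Lemma~\ref{lem:polynomial} together with the observation that the constant coefficient of $P$ vanishes, which kills the cyclotomic multiple. The only points needing care are that primality of $p$ (invertibility of $i$ and $j$) is what makes the residue collections sets rather than multisets, and that $x^{p-1}+\dots+x+1$ has degree exactly $p-1$, so any polynomial of degree at most $p-1$ vanishing at $\omega_p$ is either the zero polynomial or a nonzero scalar multiple of it.
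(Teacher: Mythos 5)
Your proof is correct and follows essentially the same route as the paper: form $P(x)=\sum_{s\in S}x^{r_{i,s}}-\sum_{s\in S}x^{r_{j,s}}$, note $\deg P\le p-1$ and $P(\omega_p)=0$, and invoke Lemma~\ref{lem:polynomial} to force $P$ to be a multiple of $x^{p-1}+\dots+x+1$ and then the zero polynomial. The only cosmetic difference is how the nonzero multiple is excluded (you compare constant coefficients, the paper evaluates at $x=1$), and your closing remark that injectivity of $s\mapsto r_{i,s}$ makes the coefficients $0$/$1$ is a slightly more explicit justification of the final set equality than the paper gives.
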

\begin{proof}
It can be observed from (\ref{eq:eigenvalues unwigted}) that $\lambda_i=\lambda_j \Longleftrightarrow \sum_{s\in S}\omega_p^{r_{i,s}}-\sum_{s\in S}\omega_p^{r_{j,s}}=0$. This implies that $\omega_p$ is a root of the polynomial $P(x)=\sum_{s\in S}x^{r_{i,s}}-\sum_{s\in S}x^{r_{j,s}}\in \mathbb{Z}[x]$. Considering that the polynomial $P(x)$ has a degree of at most $p-1$, as stated in Lemma \ref{lem:polynomial}, it can be either equal to $cA(x)$ where $A(x)=x^{p-1}+x^{p-2}+...+x+1$, or it can be the zero polynomial. However, since $P(1)=0$ and $A(1)=cp\neq 0$, it follows that $P(x)\neq cA(x)$, and therefore $P(x)$ must be the zero polynomial.
This implies that  $\lambda_i=\lambda_j \
\Longleftrightarrow \{r_{i,s}|\ s\in S\}=\{r_{j,s}|\ s\in S\}$.

    \qed
\end{proof}

\begin{theorem} 
\label{thm:strongly-regular prime}
For a prime number $p$ the circulant graph $G(p;S)$  is strongly regular if and only if $S$ is a set of all quadratic residues modulo $p$
 or all  quadratic non-residues modulo $p$ and $p\in 4\N+1$.
\end{theorem}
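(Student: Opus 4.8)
The plan is to exploit Lemma~\ref{lem:eigenvalues equality}, which reduces the question of counting distinct eigenvalues of $G(p;S)$ to counting the distinct ``residue multisets'' $\{r_{i,s}\mid s\in S\}$ as $i$ ranges over $1,\ldots,p-1$. Since $\lambda_0=|S|$ is always an eigenvalue (the regularity), the graph is strongly regular if and only if it is connected, regular (automatic for circulants), and the values $\lambda_1,\ldots,\lambda_{p-1}$ take exactly two distinct values. By Lemma~\ref{lem:eigenvalues equality} this means the map $i\mapsto iS=\{is\bmod p\mid s\in S\}$ on the nonzero residues partitions $\Z_p^*$ into exactly two classes. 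Note $\Z_p^*$ acts on subsets of $\Z_p^*$ by multiplication, and $iS=jS$ iff $i^{-1}j$ stabilizes $S$; so the orbit of $S$ under this action has size $(p-1)/|\mathrm{Stab}(S)|$, and the number of distinct sets $iS$ equals this orbit size. Hence strong regularity forces the orbit of $S$ to have size exactly $2$, i.e. $|\mathrm{Stab}(S)|=(p-1)/2$.

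The key number-theoretic step is then: the only subgroup of $\Z_p^*$ of index $2$ is the group $Q$ of quadratic residues, and a subset $S\subseteq\Z_p^*$ is stabilized by a subgroup $H$ of index $2$ exactly when $S$ is a union of $H$-cosets, i.e. $S\in\{\emptyset, Q, \Z_p^*\setminus Q, \Z_p^*\}$. The proper nonempty choices are $S=Q$ or $S=\Z_p^*\setminus Q$ (the non-residues). First I would verify that for these $S$ the stabilizer is \emph{exactly} $Q$ (not larger): if some non-residue $t$ stabilized $Q$ then $tQ=Q$, but $tQ$ is the set of non-residues, a contradiction; so $\mathrm{Stab}(Q)=Q$ has order $(p-1)/2$, giving an orbit of size $2$ and hence exactly three distinct eigenvalues. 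Then I must handle the undirected/loopless constraint imposed in the definition of circulant graphs, namely $S=p-S$ and $0\notin S$: certainly $0\notin Q$, and $Q=p-Q=-Q$ holds precisely when $-1$ is a quadratic residue, i.e. when $p\equiv 1\pmod 4$; the same applies to the non-residue set. This is exactly where the hypothesis $p\in 4\N+1$ enters, and it must be stated as part of the characterization. Conversely, when $p\equiv 3\pmod 4$, neither $Q$ nor its complement is symmetric, so no valid symbol of prime order yields a strongly regular graph this way — consistent with the statement. I should also record that connectivity is automatic since $\gcd(p,S)=1$ whenever $S\neq\emptyset$ and $p$ is prime, and that both resulting graphs are nontrivial (not complete, not empty) because $1<|S|=(p-1)/2<p-1$ for $p\geq 5$ (for $p=3$ one checks directly that $p\equiv 3\pmod4$ excludes it, and $p=5$ gives the $5$-cycle).

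For the converse direction of the theorem — that strong regularity forces $S$ to be one of these two sets — I would argue: strong regularity gives, via Lemma~\ref{lem:srg_eig} and Lemma~\ref{lem:eigenvalues equality}, that the orbit of $S$ under multiplication by $\Z_p^*$ has size exactly $2$, so $\mathrm{Stab}(S)$ is a subgroup of order $(p-1)/2$, necessarily $Q$ (the unique index-$2$ subgroup, since $\Z_p^*$ is cyclic). Then $S$ is a union of cosets of $Q$, and being a proper nonempty subset it equals $Q$ or $\Z_p^*\setminus Q$. Finally the loopless-undirected requirement $-S=S$ forces $-1\in\mathrm{Stab}(S)=Q$, hence $p\equiv 1\pmod 4$. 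The main obstacle I anticipate is not any single deep fact but the careful bookkeeping of the edge cases and the precise translation between ``$\lambda_i=\lambda_j$'' and the group action: one must be sure that exactly two distinct values among $\lambda_1,\ldots,\lambda_{p-1}$ (together with the distinct value $\lambda_0$) gives \emph{three} eigenvalues total, and that $\lambda_0$ is genuinely different from the other two (which follows since $\lambda_0=(p-1)/2$ while, by the known formula for Gauss sums or directly, the two eigenvalues on $Q$ are $\tfrac{-1\pm\sqrt{p}}{2}$ when $p\equiv1\pmod4$ — but one can avoid invoking this and instead note $\lambda_0$ is a simple eigenvalue of a connected regular graph while each of the other two has multiplicity $(p-1)/2>1$).
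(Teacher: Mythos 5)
Your proposal is correct, and it reaches the characterization by a genuinely different route than the paper for the central step. Both arguments start from the same reduction (Lemma~\ref{lem:eigenvalues equality} plus the simplicity of $\lambda_0$ for a connected regular graph, so that strong regularity is equivalent to the translates $iS$, $1\le i\le p-1$, forming exactly two distinct sets), but from there the paper works by hand: it proves $S\cup T=\{1,\ldots,p-1\}$, $S\cap T=\emptyset$, $|S|=|T|=\frac{p-1}{2}$, and then identifies the two classes with quadratic residues and non-residues by taking products $\prod_{s\in S}r_{i,s}$, applying Euler's criterion and the multiplicativity of the Legendre symbol, with a matching explicit verification in the converse direction. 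You instead observe that $i\mapsto iS$ is the multiplication action of $\Z_p^*$ on subsets, so the number of distinct translates is the orbit size $(p-1)/|\mathrm{Stab}(S)|$; orbit size $2$ forces $\mathrm{Stab}(S)$ to be the unique index-$2$ subgroup of the cyclic group $\Z_p^*$, namely the quadratic residues $Q$, and $S$ must be a union of $Q$-cosets, hence $Q$ or its complement; the condition $S=p-S$ becomes $-1\in\mathrm{Stab}(S)=Q$, i.e.\ $p\equiv 1\pmod 4$, and the converse is the easy check that $\mathrm{Stab}(Q)$ (and of its complement) is exactly $Q$. Your orbit--stabilizer framing buys concision and makes the disjointness, equal-cardinality, and coset facts automatic, and it dispenses with the product/Euler's-criterion computation; the paper's argument is more elementary and self-contained (only Euler's criterion and Legendre-symbol multiplicativity, no appeal to the subgroup structure of $\Z_p^*$), and its explicit residue bookkeeping is reused almost verbatim in the cubic-residue analogue (Theorem~\ref{thm:four eigenvalues prime}), where your index-$2$ shortcut would need to be redone with the index-$3$ subgroup. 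Your attention to the edge cases ($S$ proper and nonempty, connectivity, $\lambda_0$ distinct from the other two eigenvalues) closes the points the paper handles implicitly, so no gap remains.
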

\begin{proof}
  
%For a given $1\leq i\leq p-1$ and $s\in S$ we have
%$|\omega_p^{is}|<1$ since $p\nmid is$. 
Let $G(p;S)$ be a strongly regular graph. Considering the regularity of $G(p;S)$, which is equal to $|S|$, and the fact that $\lambda_0=|S|$, it follows that the sequence of eigenvalues $\lambda_1,\lambda_2,...,\lambda_{p-1}$, given by (\ref{eq:eigenvalues unwigted}), must consist of exactly two distinct eigenvalues (according to Lemma \ref{lem:srg_eig}).
%This implies that
%$\lambda_i<|S|$ for $1\leq i\leq p-1$ and $\lambda_0=|S|$. 

Denote by $r_{i,s}$ the residue of $is$ modulo $p$, for $1\leq i\leq p-1$ and $s\in S$. 
If we denote $S_i=\{r_{i,s}|\ s\in S\}$, for any $1\leq i\leq p-1$, according to Lemma \ref{lem:eigenvalues equality},   we conclude that 

\begin{equation}
\label{eq:eigenvalues equality1} \lambda_i=\lambda_j \
\Longleftrightarrow S_i=S_j.
\end{equation}
Therefore, since $S_1=S$, the number of distinct eigenvalues in the spectrum of $G(p;S)$ is equal to three if and
only if $\{S_i|\ 1\leq i\leq p-1\}=\{S,T\}$ for some $T,\
T\subseteq \{1,\ldots,p-1\}$.
Moreover, for two distinct
integers $s_1,s_2\in S$ we have $r_{i,s_1}\neq r_{i,s_2}$, which
implies $|S_i|=|\{r_{i,s}|\ s\in S\}|=|S|=|T|$, for $1\leq i\leq p-1$. 
Furthermore, for a given $s\in S$, we can conclude that $\{r_{i,s}|\ 1\leq i\leq p-1\}=\{1,\ldots,p-1\}$, since $\{1,\ldots,p-1\}$ forms a reduced residue system modulo $p$ and $\gcd(s,p)=1$. Therefore, we have $S\cup T=\{1,\ldots,p-1\}$.

%Let $R=\{1,2,...,p-1\}$ and $S=\{s_1,s_2,...,s_k\}.$ Observe the
%next matrix

%\centerline  {$r_{1,s_1},\  r_{1,s_2} ,...,\  r_{1,s_k}$}

%\centerline  {$r_{2,s_1},\  r_{2,s_2} ,...,\  r_{2,s_k}$}

%\centerline {$\ \ \ \ \ \vdots\ \ \ \ \ \ \ \vdots \ \ \ \ \ \ \ \ \
%\vdots\ \ \ \ \ . $}
%
%\centerline  {$r_{p-1,s_1} , r_{p-1,s_2} ,..., r_{p-1,s_k}$}

%Notice also that the set of elements of the
%first row is equal to $S$.

\smallskip

Now, we will show that $S$ and $T$ are disjoint.
Suppose that there exists some $c\in S\cap T$. This means that $c\in S_i$ for $1\leq i\leq p-1$. Therefore, for every $1\leq i\leq p-1$, there exists $s\in S$ such that $c=r_{i,s}$, and consequently, $s\equiv_p c\cdot i^{-1}$, where $i^{-1}$ denotes the modular inverse of $i$ modulo $p$. This implies that $\{c\cdot i^{-1}\ |\ 1\leq i\leq p-1\}\subseteq S$.
On the other hand, since $p$ is a prime number, both sets $\{i^{-1}|\ 1\leq i\leq p-1\}$ and $\{c\cdot i^{-1}\equiv s\ |\ 1\leq i\leq p-1\}$ form reduced residue systems modulo $p$. Therefore, we can conclude that $\{1,\ldots,p-1\}\subseteq S$.
Given that $|S|=|T|$, we finally obtain $S=T=\{1,\ldots,p-1\}$. However, this contradicts the fact that $S\neq T$, and we have proved that $S\cap T=\emptyset$.

Since $|S|=|T|$, $S\cup T=\{1,\ldots,p-1\}$ and $S\cap T=\emptyset$, it  holds that $|S|=|T|=\frac{p-1}{2}$. According to
Lemma \ref{eq:eigenvalues equality}, $\lambda_i=\lambda_j$ yields
$\prod_{s\in S}r_{i,s}=\prod_{s\in S}r_{j,s}$ and thus $\prod_{s\in
S}is\equiv_p\prod_{s\in S}js$ and $i^{|S|}\equiv_pj^{|S|}$. Using
Euler's Criterion  we obtain $\big{(}\frac{i}{p}\big{)}\equiv_p
i^{\frac{p-1}{2}}\equiv_p
j^{\frac{p-1}{2}}\equiv_p\big{(}\frac{j}{p}\big{)}$. This means that
if $\lambda_i=\lambda_j$, then the numbers $i$ and $j$ are either
both quadratic residues or both quadratic non-residues modulo $p$.
Since $S_1=S$ it follows $\{i|\ S_i=S,\
1\leq i\leq p-1\}\subseteq \{i|\
\big{(}\frac{i}{p}\big{)}=1,\
1\leq i\leq p-1\}$. Similarly, we conclude that $\{i|\ S_i=T,\
1\leq i\leq p-1\}\subseteq \{i|\
\big{(}\frac{i}{p}\big{)}=-1,\
1\leq i\leq p-1\}$. Given that $\{i|\
\big{(}\frac{i}{p}\big{)}=1,\
1\leq i\leq p-1\}=|\{i|\
\big{(}\frac{i}{p}\big{)}=-1,\
1\leq i\leq p-1\}|=\frac{p-1}{2}$ and $\{i|\ S_i=S,\
1\leq i\leq p-1\}\cup\{i|\ S_i=T,\
1\leq i\leq p-1\}=\{1,\ldots, p-1\}$, we have that $\{i|\ S_i=S,\
1\leq i\leq p-1\}= \{i|\
\big{(}\frac{i}{p}\big{)}=1,\
1\leq i\leq p-1\}$ and $\{i|\ S_i=T,\
1\leq i\leq p-1\}= \{i|\
\big{(}\frac{i}{p}\big{)}=-1,\
1\leq i\leq p-1\}$ 

Suppose there exists $s\in S$ such that $\big{(}\frac{s}{p}\big{)}=1$. For every $1\leq i\leq p-1$ such that $\big{(}\frac{i}{p}\big{)}=1$, we have that 
$S_i=S$ and $r_{i,s}\in S_i$, which implies that $r_{i,s}\in S$. Since 
$\big{(}\frac{is}{p}\big{)}=\big{(}\frac{i}{p}\big{)}\big{(}\frac{s}{p}\big{)}=1$, we conclude that $\{r_{i,s}\ |\ \big{(}\frac{i}{p}\big{)}=1,\  1\leq i\leq p-1 \}\subseteq \{i|\
\big{(}\frac{i}{p}\big{)}=1,\
1\leq i\leq p-1\}$. Moreover, from the fact that $i\neq j$ implies $r_{i,s}\neq r_{j,s}$, for $1\leq i,j\leq p-1$,
we further get that $\{r_{i,s}\ |\ \big{(}\frac{i}{p}\big{)}=1,\  1\leq i\leq p-1\}= \{i|\
\big{(}\frac{i}{p}\big{)}=1,\
1\leq i\leq p-1\}$. Finally, from the preceding discussion it can be concluded that 
$\{i|\ \big{(}\frac{i}{p}\big{)}=1,\ 1\leq i\leq p-1\}\subseteq S$ and since $|\{i|\
\big{(}\frac{i}{p}\big{)}=1,\
1\leq i\leq p-1\}|=|S|=\frac{p-1}{2}$, it holds that $\{i|\
\big{(}\frac{i}{p}\big{)}=1,\
1\leq i\leq p-1\}= S$. If we assume that there exists $s\in S$ such that $\big{(}\frac{s}{p}\big{)}=-1$, it can be proven in a similar fashion $\{i|\
\big{(}\frac{i}{p}\big{)}=-1,\
1\leq i\leq p-1\}= S$.

%\smallskip

%Now, we prove that $p\in 4\N+1$. Without loss of generality, suppose that 
%$S$ is the set of all quadratic residues. According to the definition of $S$ ($S=p-S$), we have $1\in S$ and $p-1\in S$. %This means that there exist integer $x$, such that $x^2\equiv_{p} -1$. As $x^4\equiv_{p} 1$, we show that $r_p(x)=4$, %where by $r_p(x)$ we denote the order of $x$ modulo $p$. So, by Lagrange's theorem we obtain that $r_p(x)\mid \varphi(p)$, %that is $4\mid p-1$.
%This means that $\big{(}\frac{-1}{p}\big{)}=1$. Using Euler's Criterion we further obtain that $(-1)^{\frac{p-1}%%{2}}\equiv_{p} 1$ and $\frac{p-1}{2}$ is even, which suppose to be proven. 

Now, we will prove that $p \in 4\mathbb{N}+1$. Without loss of generality, let us assume that $S$ is the set of all quadratic residues. According to the definition of $S$ as $S = p - S$, we can conclude that both $1$ and $p-1$ are elements of $S$. This implies that $\left(\frac{-1}{p}\right) = 1$. By applying Euler's Criterion, we further deduce that $(-1)^{\frac{p-1}{2}} \equiv_p 1$, and it follows that $\frac{p-1}{2}$ is even, as we set out to prove.

\medskip

Suppose now that $S$ is the set of all quadratic
residues and $p\in 4\N+1$. If $x\in S,$ then for any $s\in S$ we have
$\big{(}\frac{xs}{p}\big{)}=\big{(}\frac{x}{p}\big{)}\big{(}\frac{s}{p}\big{)}=1$
and $r_{x,s}\in S$ which implies $S_x\subseteq S$. Since
$|S_x|=|S|=\frac{p-1}{2}$ we obtain $S_x=S$. If $T$ is the set of all quadratic non-residues and $x\in T$, then for
every $s\in S$ there holds
$\big{(}\frac{xs}{p}\big{)}=\big{(}\frac{x}{p}\big{)}\big{(}\frac{s}{p}\big{)}=-1$
which yields $r_{x,s}\in T$. Thus, we have $S_x\subseteq T$ and
$S_x=T$. 
This way we have proved that $G(p;S)$ has exactly three distinct eigenvalues in its spectrum. Similar conclusion can be derive for $S$ being set of all quadratic non-residues.

In the rest of the proof, we show that $S=p-S$. It is sufficient to prove that  $-1$ is a quadratic residue modulo $p$.
Using Euler's Criterion, we have that $\left(\frac{-1}{p}\right)\equiv_{p} (-1)^{\frac{p-1}{2}} \equiv_p 1$, which is supposed to be proven.

\qed
\end{proof}

It can be easily concluded that a strongly regular graph $G(p;S)$, for $p\in 4\N+1$, is a self-complementary graph. Indeed, if $S$ contains all quadratic residues modulo $p$, then the set of symbols of $\overline{G(p;S)}$ contains all quadratic non-residues. We can establish a bijection $f:\{0,1,\ldots,p-1\}\rightarrow \{0,1,\ldots,p-1\}$ such that $f(x)=r_{b,x}$ for all 
$x\in \{0,1,\ldots,p-1\}$ and some non-quadratic residue $b$ modulo $p$. It can be easily shown that this bijection is an isomorphism. 

Now, we can proceed with determining of the spectrum and parameters of the strongly regular graph $G(p;S)$, where $p\in 4\N+1$ and $S$ is the set of quadratic residues modulo $p$. 
Let $\lambda_0,\ldots,\lambda_{p-1}$ be the spectrum of $G(p;S)$ given by (\ref{eq:eigenvalues unwigted}).
From the proof of Theorem \ref{thm:strongly-regular prime}, we immediately see that the regularity of the graph $r$ is equal to the eigenvalue $\lambda_0=|S|=\frac{p-1}{2}$. Now, let $j$ be a quadratic residue modulo $p$. Since the equation $x^2\equiv_{p} i$ has two incongruent solutions modulo $p$, according to Theorem  \ref{thm:residues}, we see that $1+2\lambda_j=\sum_{i=0}^{p-1}\omega_p^{i^2}$.  
Let $z$ denotes the sum $z=\sum_{i=0}^{p-1}\omega_p^{i^2}$. We see that $|z|^2=z\overline{z}=(\sum_{i=0}^{p-1}\omega_p^{i^2}) (\sum_{i=0}^{p-1}\omega_p^{-i^2})=\sum_{i=0}^{p-1}\sum_{j=0}^{p-1}\omega_p^{i^2-j^2}=\sum_{i=0}^{p-1}\sum_{j=0}^{p-1}\omega_p^{(i-j)(i+j)}$. Given that we can establish a bijection $g:\{0,1,\ldots,p-1\}\times \{0,1,\ldots,p-1\}\rightarrow \{0,1,\ldots,p-1\}\times \{0,1,\ldots,p-1\}$ such that $g(i,j)=(r_{1,i-j}, r_{1,i+j})$, then the last sum $\sum_{i=0}^{p-1}\sum_{j=0}^{p-1}\omega_p^{(i-j)(i+j)}$ is equal to $\sum_{i=0}^{p-1}\sum_{j=0}^{p-1}\omega_p^{ij}$. Finally, we have that $\sum_{i=0}^{p-1}\sum_{j=0}^{p-1}\omega_p^{ij}=\sum_{j=0}^{p-1}\omega_p^{0}+\sum_{i=1}^{p-1}\frac{\omega_p^{ip}-1}{\omega_p^i-1}=p$, and therefore $|z|=\sqrt{p}$. As $z\in \R$, suppose that $z=\sqrt{p}$.  This directly implies that $\lambda_i=\frac{z-1}{2}=\frac{\sqrt{p}-1}{2}$. Let $\tau=\lambda_i$ and $\theta$ be the negative eigenvalue in the spectrum of $G(p;S)$. According to the proof of the preceding theorem, we have that $m_\theta=m_\tau=\frac{p-1}{2}$.  From (\ref{visestrukosti}), we deduce that $\theta=-\frac{\tau(p+1-m_{\theta})+r}{m_{\theta}}$. Moreover, since $p+1-m_{\theta}=r=m_{\theta}=\frac{p-1}{2}$, we have that $\theta=-\tau-1=-\frac{\sqrt{p}-1}{2}-1=-\frac{\sqrt{p}+1}{2}$. Furthermore, using (\ref{parametri-sopstvene vrednosti}), we see that $c=\theta\tau+r$, which directly implies that $c=\frac{p-1}{4}$. 
Taking into account the same equation, it can be observed that $a=\theta+\tau+c$, and a simple calculation leads to the result $a=\frac{p-5}{4}$. If we assume $z=-\sqrt{p}$, we can determine that $\tau=-\frac{\sqrt{p}+1}{2}$ and $\theta=\frac{\sqrt{p}-1}{2}$. This indicates that we obtain the same set of eigenvalues in this case.

\subsection {Construction of strongly regular graphs using line operator}

%Given a graph $G$, its line graph $L(G)$ is a graph such that each
%vertex of $L(G)$ represents an edge of $G$; and two vertices of
%$L(G)$ are adjacent if and only if their corresponding edges share a
%common endpoint (i.e. are incident) in $G$. 
In this section, we explore additional classes of strongly regular graphs by using the line graph operator $L$ on the class of unitary Cayley graphs. This concept arises from the well-known observation that two classes of strongly regular graphs can be obtained by applying the line operator $L$ to complete graphs $K_n$ and complete bipartite graphs $K_{n,n}$. It is worth mentioning that both of these classes are circulant graphs, similar to unitary Cayley graphs. The graph $L(K_n)$ has the parameters $(\frac{n(n-1)}{2}, 2n-4, n-2, 4)$ (also known as triangular graphs), while $L(K_{n,n})$ has parameters $(n^2, 2n-2, n-2, 2)$ (referred to as square lattice graphs). We will demonstrate that by repeatedly applying the line operator, we can obtain strongly regular graphs that are not circulant graphs

To begin with, we will describe all strongly regular graphs within the class of unitary Cayley graphs.

\begin{theorem}
\label{thm:sruc} Unitary Cayley graph $X_n$ is strongly regular if
and only if $n$ is composite prime power.
\end{theorem}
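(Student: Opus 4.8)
The plan is to read off the spectrum of $X_n=\ICG_n(\{1\})$ from the Ramanujan function and then apply Lemma~\ref{lem:srg_eig}. By (\ref{ldef}) the eigenvalues of $X_n$ are $\lambda_j=c(j,n)$ for $0\le j\le n-1$, and as $j$ runs over $\{0,\dots,n-1\}$ the quantity $\gcd(n,j)$ runs over all divisors of $n$, hence so does $t_{n,j}=n/\gcd(n,j)$. Thus the set of distinct eigenvalues of $X_n$ is exactly
\[
\Big\{\,\mu(t)\,\tfrac{\varphi(n)}{\varphi(t)}\ :\ t\mid n\,\Big\},
\]
and in particular $0$ is an eigenvalue precisely when $n$ is not square-free (take $t=p^2$ for a prime with $p^2\mid n$, realized by $j=n/t$). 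Since $X_n$ is connected and $\varphi(n)$-regular, Lemma~\ref{lem:srg_eig} reduces the claim to showing that $X_n$ has exactly three distinct eigenvalues if and only if $n=p^{\alpha}$ with $\alpha\ge 2$.

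The two easy directions I would settle first. If $n=p$ is prime, then the square-free divisors are $1$ and $p$, giving eigenvalues $\varphi(p)=p-1$ and $\mu(p)\varphi(p)/\varphi(p)=-1$; so $X_p=K_p$ has only two distinct eigenvalues and is not strongly regular. If $n=p^{\alpha}$ with $\alpha\ge 2$, the square-free divisors are again only $1$ and $p$, contributing $\varphi(p^{\alpha})=p^{\alpha-1}(p-1)$ and, by (\ref{prop:cn/p}) with $d=1$, the value $-\varphi(p^{\alpha})/(p-1)=-p^{\alpha-1}$; every non-square-free divisor contributes $0$, which does occur since $\alpha\ge 2$. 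These three values are pairwise distinct, so $X_{p^{\alpha}}$ has exactly three distinct eigenvalues and is strongly regular.

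For the converse implication, assume $n$ has at least two distinct prime divisors and let $p<q$ be the two smallest of them. Choosing $j$ with $t_{n,j}$ equal to $1$, $p$, $q$, and $pq$ respectively, and evaluating $c(j,n)$ via (\ref{ramanujan}) (using (\ref{prop:c0}) and (\ref{prop:cn/p}) for the first three, and $\mu(pq)=1$, $\varphi(pq)=(p-1)(q-1)$ for the last), one obtains that
\[
\varphi(n),\qquad -\frac{\varphi(n)}{p-1},\qquad -\frac{\varphi(n)}{q-1},\qquad \frac{\varphi(n)}{(p-1)(q-1)}
\]
are all eigenvalues of $X_n$. The first and last are positive and the middle two negative; the two negative ones differ because $p-1\ne q-1$; and the last is strictly less than $\varphi(n)$ because $(p-1)(q-1)\ge 2$. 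Hence these four eigenvalues are pairwise distinct, so $X_n$ has at least four distinct eigenvalues and is not strongly regular.

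I do not expect a serious obstacle in this argument; the only point needing a little care is the bookkeeping in the last paragraph, namely verifying that the four displayed quantities are genuinely attained as eigenvalues (which follows from the fact that $t_{n,j}$ realizes every divisor of $n$) and that they are genuinely distinct, and this is immediate from Proposition~\ref{prop:c} together with the elementary inequalities above.
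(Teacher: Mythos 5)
Your proposal is correct and follows essentially the same route as the paper: read the spectrum of $X_n$ off the Ramanujan function $c(j,n)$ and invoke Lemma~\ref{lem:srg_eig}, with the prime-power computation matching the paper's Case 1 verbatim. The only (minor) variation is in the composite non-prime-power case, where your fourth witness eigenvalue $\mu(pq)\varphi(n)/\varphi(pq)=\varphi(n)/((p-1)(q-1))>0$ yields four pairwise distinct eigenvalues directly and thereby sidesteps the special case $n=2p_2$ that the paper has to treat separately via $\lambda_1=\mu(n)$.
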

\begin{proof}
Recall that a connected regular graph is strongly regular if it has
exactly three distinct eigenvalues (according to Lemma \ref{lem:srg_eig}). Since $X_n$ is a connected
regular graph with spectral radius $\lambda_0$, it is sufficient to
characterize all Unitary Cayley graphs such that the set
$\{\lambda_1,\ldots,\lambda_{n-1}\}$ contains exactly two distinct
values.

Suppose that the set $\{\lambda_1,\ldots,\lambda_{n-1}\}$ contains
exactly two distinct values. We distinguish two cases depending on the
different values of $k$.

\medskip

{\bf Case 1.} $k=1$. For $n=p_1$, it is clear that $X_{p_1}$ is complete and it is not strongly regular by definition.
Let $n={p_1}^{\alpha_1}$ for $\alpha_1\geq 2$ and let $1\leq j\leq n-1$
be an arbitrary index such that $j={p_1}^\beta M$, for $0\leq \beta\leq
\alpha_1-1$ and $M\in 2\N+1$. Then, we conclude that
$t_{n,j}={p_1}^{\alpha_1}/\gcd({p_1}^{\alpha_1},{p_1}^{\beta}M)={p_1}^{\alpha_1-\beta}$.
Furthermore, as
\begin{eqnarray}
\label{sopstvenese za jako regularni unitarni}
\lambda_j=c(j,n)&=&\mu({p_1}^{\alpha_1-\beta})\varphi({p_1}^{\alpha_1})/\varphi({p_1}^{\alpha_1-\beta})=\left\{
\begin{array}{rl}
0, & 0\leq \beta \leq \alpha_1-2 \\
-{p_1}^{\alpha_1-1}, & \beta=\alpha_1-1 \\
\end{array}\right.,
\end{eqnarray}
it is clear that $X_{p_1^{\alpha_1}}$ is a strongly regular graph, for
$\alpha_1\geq 2$.

\medskip

{\bf Case 2.} $k\geq 2$. Let $p$ be an arbitrary prime
divisor of $n$. Considering (\ref{prop:cn/p}), we
obtain that
$\lambda_{n/p}=c(n/p,n)=-\varphi(n)/(p-1)$.
Since $k\geq 2$, then there exists at least two prime divisors $p_i$
and $p_j$ of $n$ such that $i<j$ and
$|\lambda_{n/p_i}|>|\lambda_{n/p_j}|$. Furthermore, as $\lambda_1=\mu(n)$, according to (\ref{prop:c1}), we see that
$|\lambda_{n/p_i}|>|\lambda_{n/p_j}|\geq  1\geq \lambda_1$ and hence $X_n$ is not strongly regular, if $|\lambda_{n/p_j}|>1$.
On the other hand, the equality $|\lambda_{n/p_j}|=  1$ holds if and only if  $n=2p_2$. However, in this case 
$\lambda_{n/p_j}=  -1$, $\lambda_1=\mu(n)=1$, and hence $\lambda_1\neq \lambda_{n/p_j}$. It follows that the graph $X_{2p_2}$ does not meet the criteria for being strongly regular.

 \qed
\end{proof}

It is easy to see that strongly regular graphs in the class of unitary Cayley graphs are
complete multipartite graphs on $p^\alpha$ ($\alpha\geq 2$) vertices, with regularity $p^{\alpha-1}(p-1)$. According to
(\ref{parametri-sopstvene vrednosti}), any two distinct adjacent
vertices have $p^{\alpha-1}(p-2)$ common neigbours, while any to
nonadjacent vertices have $p^{\alpha-1}(p-1)$ common neighbours. Furthermore, the multiplicities  of the eigenvalues $\theta$ and $\tau$ are $m_{\theta}=p(p^{\alpha-1}-1)$ and $m_{\tau}=p-1$, respectively.

\bigskip

We use the following theorem due to the fact that the line graph of a regular graph is also a regular graph.
%Moreover, when we say that a graph of the order $n$ has the spectrum $\lambda_1,\ldots, \lambda_n$, we %mean that $\lambda_1,\ldots, \lambda_n$ are the eigenvalues of its adjacency matrix.

\begin{theorem}
\cite{Sa67} Let $G$ be $k$ regular connected graph with $n$ vertices
and $\lambda_1,\ldots, \lambda_n$ the eigenvalues of its adjacency matrix. Then the spectrum
of $L(G)$ consists of $-2$ with multiplicity $\frac{kn}{2} - n$ and
$k + \lambda_i - 2$ for every $1 \leq i \leq n$.
\label{lg-regular}
\end{theorem}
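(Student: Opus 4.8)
The plan is to relate the adjacency matrix of $L(G)$ to that of $G$ through the vertex–edge incidence matrix of $G$. Let $B$ denote the $n\times m$ incidence matrix of $G$, where $m=\frac{kn}{2}$ is the number of edges. Since $G$ is a simple $k$-regular graph, a direct entrywise computation gives $BB^{T}=A(G)+kI_{n}$: the diagonal entry $(i,i)$ counts the edges incident to vertex $i$, which is $k$, while for $i\neq j$ the entry $(i,j)$ counts the edges joining $i$ and $j$, which is $A(G)_{ij}$. Similarly, $B^{T}B=A(L(G))+2I_{m}$: every edge has exactly two endpoints, which gives the diagonal value $2$, and two distinct edges contribute $1$ to the corresponding off-diagonal entry precisely when they share an endpoint, which is exactly the adjacency relation defining $L(G)$.

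Next I would invoke the standard identity relating the characteristic polynomials of $BB^{T}$ and $B^{T}B$ for an arbitrary $n\times m$ real matrix $B$, namely
\begin{equation*}
\det(xI_{m}-B^{T}B)=x^{\,m-n}\det(xI_{n}-BB^{T}).
\end{equation*}
It follows that the multiset of eigenvalues of $B^{T}B$ is the multiset of eigenvalues of $BB^{T}$ together with the eigenvalue $0$ taken with multiplicity $m-n=\frac{kn}{2}-n$. The eigenvalues of $BB^{T}=A(G)+kI_{n}$ are exactly $\lambda_{i}+k$ for $1\leq i\leq n$, so the eigenvalues of $B^{T}B$ are $\lambda_{i}+k$ for $1\leq i\leq n$ together with $0$ with multiplicity $\frac{kn}{2}-n$.

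Finally, since $A(L(G))=B^{T}B-2I_{m}$, subtracting $2$ from each eigenvalue of $B^{T}B$ shows that the spectrum of $L(G)$ consists of $k+\lambda_{i}-2$ for $1\leq i\leq n$ together with $-2$ with multiplicity $\frac{kn}{2}-n$, which is the assertion.

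The two incidence-matrix identities are routine bookkeeping; the one genuinely load-bearing ingredient is the determinant identity for $BB^{T}$ and $B^{T}B$ (equivalently, the fact that $MM^{T}$ and $M^{T}M$ share the same nonzero eigenvalues with multiplicities and differ only in the multiplicity of $0$), so I would either cite it or include its short proof. I would also flag the mild implicit hypothesis $k\geq 2$ (so that $m-n\geq 0$ and the stated multiplicity of $-2$ is meaningful); for $k=1$ the only connected example is $K_{2}$, a degenerate boundary case.
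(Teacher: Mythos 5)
Your argument is correct: the identities $BB^{T}=A(G)+kI_{n}$ and $B^{T}B=A(L(G))+2I_{m}$ together with the fact that $B^{T}B$ and $BB^{T}$ share their nonzero spectrum (with the eigenvalue $0$ picking up the extra multiplicity $m-n=\frac{kn}{2}-n$) give exactly the stated spectrum of $L(G)$, and your caveat about $k\geq 2$ is a fair remark on the degenerate case $K_{2}$. Note that the paper offers no proof of this statement at all — it is quoted as a known result of Sachs \cite{Sa67} — so there is nothing to compare against; what you have written is the standard incidence-matrix proof of that classical theorem and would serve as a self-contained justification.
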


This theorem enables us to easily observe that the
number of distinct eigenvalues of a regular $G$ is less than or equal 
 to the number of distinct eigenvalues of $L(G)$.

\begin{theorem}
\label{thm:sr-line of ucg}
Let $X_n$ be unitary Cayley graph of the order $n$. Then the line graph
of $X_n$ is strongly regular if and only if $n$ is either prime
greater than $3$ or $n$ is a power of $2$ greater than $2$.
\end{theorem}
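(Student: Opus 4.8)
\noindent
The plan is to read the spectrum of $L(X_n)$ off that of $X_n$ by Sachs' theorem (Theorem~\ref{lg-regular}) and then count distinct eigenvalues: $L(X_n)$ is a connected regular graph, so by Lemma~\ref{lem:srg_eig} it is strongly regular precisely when it has exactly three distinct eigenvalues. Put $k=\varphi(n)$ for the regularity of $X_n$. By Theorem~\ref{lg-regular}, the eigenvalues of $L(X_n)$ are the numbers $k+\lambda-2$ with $\lambda$ running over the eigenvalues of $X_n$, together with $-2$. Since $\lambda\mapsto k+\lambda-2$ is injective, the observation following Theorem~\ref{lg-regular} --- that applying $L$ to a regular graph never decreases the number of distinct eigenvalues --- shows that if $X_n$ has at least four distinct eigenvalues, then so does $L(X_n)$, which is then not strongly regular. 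By Theorem~\ref{thm:sruc} together with the fact that a connected graph with exactly two distinct eigenvalues is complete (and $X_n$, for $n\geq 2$, is complete iff $n$ is prime), the only $n\geq 2$ for which $X_n$ has at most three distinct eigenvalues are the primes and the composite prime powers $p^{\alpha}$ with $\alpha\geq 2$. It therefore suffices to analyse these two cases.

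Suppose first that $n=p$ is prime, so $X_p=K_p$, with eigenvalues $p-1$ and $-1$. By Theorem~\ref{lg-regular}, the spectrum of $L(K_p)$ is $\{(2p-4)^{(1)},\,(p-4)^{(p-1)},\,(-2)^{(p(p-3)/2)}\}$. For $p\geq 5$ all three multiplicities are positive and the three values are pairwise distinct (equality among them would force $p\in\{0,1,2\}$), so $L(X_p)$ is strongly regular --- indeed it is the triangular graph $T(p)$. For $p=3$ the multiplicity of $-2$ is $0$ and $L(K_3)=K_3$ is complete, and for $p=2$ one has $L(K_2)=K_1$; neither is strongly regular. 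Hence in this case $L(X_n)$ is strongly regular exactly when $n$ is a prime greater than $3$.

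Now suppose $X_n$ has three distinct eigenvalues, so by Theorem~\ref{thm:sruc} $n=p^{\alpha}$ with $\alpha\geq 2$, and by~(\ref{sopstvenese za jako regularni unitarni}) the eigenvalues of $X_{p^{\alpha}}$ are $k=p^{\alpha-1}(p-1)$, $0$, and $-p^{\alpha-1}$. By Theorem~\ref{lg-regular}, every eigenvalue of $L(X_n)$ then belongs to $\{\,2k-2,\ k-2,\ k-p^{\alpha-1}-2,\ -2\,\}$, so $L(X_n)$ is strongly regular iff two of these four numbers coincide. Running through the possible equalities, each of them forces $k=0$, $k=\pm p^{\alpha-1}$, or $p^{\alpha-1}=0$; since $k=p^{\alpha-1}(p-1)>0$, the only one that can hold is $k-p^{\alpha-1}-2=-2$, i.e.\ $p=2$. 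Thus for odd $p$ the graph $L(X_{p^{\alpha}})$ has four distinct eigenvalues and is not strongly regular, while for $p=2$ the four candidates collapse to the three distinct numbers $2^{\alpha}-2$, $2^{\alpha-1}-2$, and $-2$, so $L(X_{2^{\alpha}})$ is strongly regular for every $\alpha\geq 2$. Combined with the reduction of the first paragraph, this yields the claimed characterization.

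The computations above are short; the care required is only in verifying that each eigenvalue produced by Sachs' theorem actually occurs with positive multiplicity --- this is where the degenerate values $p=2,3$ are eliminated in the prime case --- and that the surviving values are genuinely distinct. The crux is the three-eigenvalue case, where the only conceivable obstacle is a hidden coincidence among the four candidate eigenvalues of $L(X_{p^{\alpha}})$; this evaporates at once, since every such coincidence reduces to one of the impossible identities $k=0$ or $k=\pm p^{\alpha-1}$ unless $p=2$.
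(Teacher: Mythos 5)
Your proof is correct and follows essentially the same route as the paper's: Sachs' theorem (Theorem~\ref{lg-regular}) together with Theorem~\ref{thm:sruc} reduces the problem to the prime and composite prime-power cases, and the same explicit spectra $\{2p-4,p-4,-2\}$ and $\{2p^{\alpha-1}(p-1)-2,\,p^{\alpha-1}(p-1)-2,\,p^{\alpha-1}(p-2)-2,\,-2\}$ decide them. The only step you leave implicit is that in the odd prime-power case the eigenvalue $-2$ actually occurs with positive multiplicity (i.e.\ $\varphi(p^{\alpha})>2$, so $\frac{kn}{2}-n>0$), which is what forces four rather than three distinct eigenvalues there; since this is immediate, the argument stands.
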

\begin{proof}
Let $k=\varphi(n)$ be the regularity of $X_n$.

If $kn/2-n=0$, then the number of distinct eigenvalues of $L(X_n)$ is
equal to the number of distinct eigenvalues of $X_n$. In this case, we have that
$k=\varphi(n)=2$, and therefore $n\in\{3,4,6\}$. As $X_n\simeq C_n \simeq
L(X_n)$ for $n\in\{3,4,6\}$, according to Theorem \ref{thm:sruc}, we
have that $L(X_4)$ is the only strongly regular graph.

If $kn/2-n>0$, then one eigenvalue of $L(X_n)$ must be $-2$. Now,
suppose that $L(X_n)$ is strongly regular. As $L(X_n)$ has three
distinct eigenvalues then either $X_n$ has three distinct
eigenvalues such that one of them is equal to $-k$
($k+\lambda_i-2=-2$) or $X_n$ has two distinct eigenvalues and
none of them is equal to $-k$.

According to Theorem \ref{thm:sruc}, $X_n$ has three distinct
eigenvalues if and only if $n=p^\alpha$, for some prime $p$ and $\alpha\geq 2$,
and using (\ref{sopstvenese za jako regularni unitarni}) the eigenvalues of $X_n$ are $\{p^{\alpha-1}(p-1), 0,
-p^{\alpha-1}\}$. Furthermore, according to Theorem \ref{lg-regular}, any eigenvalue of $L(X_n)$ takes one of the
following values $\{2p^{\alpha-1}(p-1)-2, p^{\alpha-1}(p-1)-2,
p^{\alpha-1}(p-2)-2, -2 \}$. Therefore, as $p^{\alpha-1}(p-2)-2$ is the second smallest eigenvalue, we deduce, in this case,
that $L(X_n)$ is strongly regular if and only if $n=p^{\alpha}$ and
$p^{\alpha-1}(p-2)-2=-2$, that is, for $n=2^{\alpha}$.

If $X_n$ has exactly two eigenvalues it is a complete graph and it is easy to see that $n=p$,
for some prime $p$, and the eigenvalues of $X_n$ are $\{p-1, -1\}$.
It is clear that none of them is equal to $-k=-\varphi(p)=-p+1$, for
$p>2$, and therefore we conclude that $L(X_p)$ is strongly regular,
for $p>3$, and the eigenvalues of $L(X_p)$ are $\{2(p-1)-2,p-4,-2\}$. \qed
\end{proof}

Using the following statement, we will prove that the founded classes $L(X_p)$ for prime $p>3$,
and $L(X_{2^{\alpha}})$ for $\alpha\geq 2$, establish new classes
of strongly regular graphs, by proving that are not circulant graphs
(with the exception of $X_4\simeq C_4$).

\begin{theorem}
\cite{BrHo12} \label{line graphs and circulants} Let $G$ be a
connected graph such that $L(G)$ is a circulant. Then $G$ must
either be $C_n$, $K_4$, or $K_{a,b}$ for some $a$ and $b$ such that
$\gcd(a,b) = 1$.
\end{theorem}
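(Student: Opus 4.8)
The plan is to use two classical facts about line graphs. First, for a connected $G$, $L(G)$ is regular precisely when $G$ is regular or $G$ is bipartite with each of its two colour classes of constant degree; since a circulant is regular, this already restricts $G$ to these two families. Second, Whitney's theorem: if $G$ is connected with at least five vertices, the natural map $\mathrm{Aut}(G)\to\mathrm{Aut}(L(G))$ is an isomorphism. First I would clear the finitely many connected graphs on at most four vertices by inspection, which (ignoring the trivial $K_1$) leaves exactly $K_2=K_{1,1}$, $P_3=K_{1,2}$, $C_3$, $C_4$, $K_{1,3}$ and $K_4$, all of them on the asserted list; note $L(K_4)=K_{2,2,2}=Cay(\Z_6,\{1,2,4,5\})$, which is why $K_4$ must appear. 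So from now on $|V(G)|\ge 5$; put $n=|E(G)|$, the number of vertices of $L(G)$.

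Let $C\cong\Z_n$ be a cyclic subgroup of $\mathrm{Aut}(L(G))$ acting regularly on $V(L(G))$; by Whitney's theorem I may regard $C$ as a cyclic subgroup of $\mathrm{Aut}(G)$ acting regularly on $E(G)$. Two observations drive the argument. (i) An automorphism of a simple graph is determined by its vertex action, so $C$ acts faithfully on $V(G)$ and a generator $\sigma$ has order exactly $n$ as a permutation of $V(G)$. (ii) For every vertex $v$ the stabiliser $C_v$ permutes the $\deg_G(v)$ edges of $G$ at $v$, and since these are vertices of $L(G)$ on which $C$ acts freely, $C_v$ acts freely on them; hence $|C_v|$ divides $\deg_G(v)$ and the $C$-orbit of $v$ has size at least $n/\deg_G(v)$.

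Suppose first that $G$ is $d$-regular. If $d\le 2$ then $G=C_n$ (as $|V(G)|\ge5$), and $L(C_n)=C_n$ is a circulant. If $d\ge 3$, then $n=|V(G)|d/2$, and by (ii) every vertex orbit has size $\ge|V(G)|/2$, so there are at most two orbits, of equal size if there are two; hence $\sigma$ is a single $|V(G)|$-cycle or a product of two $|V(G)|/2$-cycles on $V(G)$, so its order on $V(G)$ is at most $|V(G)|$ — contradicting (i), since $n=|V(G)|d/2>|V(G)|$. Thus the only regular examples are the cycles and (from the small-case check) $K_4$. Now suppose $G$ is bipartite with colour classes $A,B$ of constant degrees $a<b$. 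Then $C$ fixes the (unique) bipartition, and since $|A|=n/a\ne n/b=|B|$ it fixes $A$ and $B$ separately; applying (ii) inside $A$ shows the orbit of any vertex of $A$ is all of $A$, and likewise for $B$, so $\sigma$ acts as an $|A|$-cycle on $A$ and a $|B|$-cycle on $B$, whence $\mathrm{lcm}(|A|,|B|)=n=a|A|=b|B|$ by (i). Writing $g=\gcd(|A|,|B|)$, this gives $|A|=bg$, $|B|=ag$, $n=abg$. Fixing a base edge identifies $E(G)$ $C$-equivariantly with $\Z_n$, under which $A\cong\Z_n/H_A$ and $B\cong\Z_n/H_B$ for the unique subgroups $H_A,H_B$ of $\Z_n$ of orders $a,b$, and a vertex of $A$ is adjacent to a vertex of $B$ exactly when the two cosets meet, i.e. when they have the same image in $Q=\Z_n/(H_A+H_B)$. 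Hence $G$ is a disjoint union of $|Q|$ complete bipartite graphs; connectedness forces $|Q|=1$, so $g=\gcd(a,b)=1$, and then $|A|=b$, $|B|=a$, so $G=K_{a,b}$ with $\gcd(a,b)=1$.

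The crux, and the step I expect to be most delicate, is this biregular case: turning the arithmetic constraints imposed by the regular cyclic action into the coset description of $G$, and from it reading off both that $G$ is a single $K_{a,b}$ and that $\gcd(a,b)=1$ is not an extra hypothesis but is forced (it is, of course, also exactly the condition under which $L(K_{a,b})=K_a\cp K_b$ is a circulant, while $L(C_n)=C_n$ always is — the easy converse). The other point requiring care is the appeal to Whitney's theorem: it is what lets the cyclic group be transported from $\mathrm{Aut}(L(G))$ back to $\mathrm{Aut}(G)$, and the fact that this fails precisely for $K_4$ — where $\mathrm{Aut}(K_{2,2,2})$ strictly contains $S_4=\mathrm{Aut}(K_4)$ — is exactly the source of the exceptional graph in the statement.
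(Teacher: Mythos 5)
The paper does not prove this statement at all: it is imported verbatim from \cite{BrHo12}, so there is no internal argument to compare yours with, and your proposal has to be judged as an independent proof. Judged that way, it is correct, and it is a clean group-theoretic route that is quite different in spirit from simply citing the source. The checkpoints all hold: the exhaustive list of connected graphs on at most four vertices with circulant line graph is right (the other candidates $P_4$, the paw and $K_4-e$ have non-regular line graphs); the automorphism form of Whitney's theorem, $\mathrm{Aut}(G)\cong\mathrm{Aut}(L(G))$ naturally for connected $G$ on at least five vertices, is valid since all exceptional graphs ($K_2$, the paw, $K_4-e$, $K_4$) have at most four vertices; your observation (ii) is sound because a regular action on $E(G)$ restricts to a free action of the vertex stabiliser $C_v$ on the star at $v$, so $|C_v|\mid \deg(v)$; and since the orbits of the cyclic group $\langle\sigma\rangle$ are exactly the cycles of $\sigma$, the order count $n=|V|d/2>|V|$ kills the regular case with $d\ge 3$, while in the biregular case transitivity on each side gives $\mathrm{lcm}(|A|,|B|)=n$ and the coset model $A\cong\Z_n/H_A$, $B\cong\Z_n/H_B$ with adjacency governed by $Q=\Z_n/(H_A+H_B)$ is justified by orbit–stabiliser plus uniqueness of subgroups of a cyclic group.

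Two small expository gaps you should close, neither of them fatal. First, the step ``connectedness forces $|Q|=1$, so $g=\gcd(a,b)=1$'' needs the one-line computation $|H_A+H_B|=\mathrm{lcm}(a,b)$, hence $|Q|=n/\mathrm{lcm}(a,b)=g\,\gcd(a,b)$, which is what makes $|Q|=1$ equivalent to $g=1$ \emph{and} $\gcd(a,b)=1$. (In fact you could get $\gcd(a,b)=1$ even earlier and more cheaply: $\mathrm{lcm}(|A|,|B|)=\mathrm{lcm}(n/a,n/b)=n/\gcd(a,b)$, so the order condition alone forces $\gcd(a,b)=1$; equivalently, freeness of $C$ on $E(G)$ gives $H_A\cap H_B=\{0\}$. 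Connectedness is then only needed to rule out $G$ being $g$ disjoint copies of $K_{a,b}$.) Second, when you invoke Whitney you should state explicitly that you use both injectivity and surjectivity of the natural map, since you need the cyclic group to sit inside $\mathrm{Aut}(G)\subseteq\mathrm{Sym}(V(G))$ with a generator of vertex-order exactly $n$; as noted, this is fine for $|V(G)|\ge 5$. With these two sentences added, your argument is a complete proof of the Brown--Hoshino theorem.
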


%According to Theorem \ref{thm:sr-line of ucg}, in fact, we prove that $L(X_p)$ for prime $p>3$, and
%$L(X_{2^{\alpha}})$ for $\alpha\geq 2$, are not circulant graphs .

Suppose first that $L(X_p)$ for prime $p>3$, is circulant. Since
$X_p$ is isomorphic to the complete graph $K_p$, for prime $p>3$,
then it cannot be isomorphic to any of the following
graphs $C_n$, $K_4$, or $K_{a,b}$ for some $a$ and $b$ such that
$\gcd(a,b) = 1$.

Now, suppose that $L(X_{2^{\alpha}})$ for $\alpha\geq 2$, is circulant.
It is easy to see that $X_{2^{\alpha}}$ is a complete bipartite graph
with the independent sets $C_i=\{0\leq j\leq 2^{\alpha}-1\ |\
j\equiv_2 i\}$, for $i\in\{0,1\}$. Therefore, $X_{2^{\alpha}}\simeq
K_{2^{\alpha-1},2^{\alpha-1}}$ and since $\alpha\geq 2$ we have
$\gcd(2^{\alpha-1},2^{\alpha-1})\neq 1$. 
Furthermore, the number of edges in $X_{2^{\alpha}}$ is equal to $\frac{2^{\alpha}\cdot2^{\alpha-1}}{2}$, which is distinct from the number of edges in $C_{2^{\alpha}}$, that is $2^{\alpha}$ (for $\alpha>2$). Therefore, based on Theorem \ref{line graphs and circulants}, we can conclude that strongly regular line graphs of unitary Cayley graphs are not circulants.

\bigskip

For $k\geq 1$, the $k$-th iterated line graph of $G$ is $L^k(G) =
L(L^{k-1}(G))$, where $L^0(G) = G$ and $L^1(G) = L(G)$. In the
following theorem we prove that the class of strongly regular graphs
derived from unitary Cayley graphs can not be extended any more
using the line graph operation.

\begin{theorem}
\label{thm:L^2-sr}
Let $X_n$ be unitary Cayley graph of the order $n$. Then $L^2(X_n)$ is
strongly regular if and only if $n=4$.
\end{theorem}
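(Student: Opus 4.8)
The plan is to iterate the spectral analysis one more level, reducing to the already-settled case of when $L(X_n)$ is strongly regular (Theorem \ref{thm:sr-line of ucg}) together with the iterated-line-graph structure of Theorem \ref{lg-regular}. First I would recall that since $X_n$ is connected and regular, so is $L(X_n)$, and hence $L^2(X_n)=L(L(X_n))$ is a regular connected graph; thus by Lemma \ref{lem:srg_eig} it is strongly regular exactly when it has three distinct eigenvalues. Writing $k=\varphi(n)$ for the regularity of $X_n$ and $r$ for the regularity of $L(X_n)$, I would note $r=2k-2$, and apply Theorem \ref{lg-regular} twice to express the eigenvalues of $L^2(X_n)$ in terms of those of $X_n$: if the eigenvalues of $X_n$ are $\lambda_i$, then $L(X_n)$ has eigenvalues $k+\lambda_i-2$ together with $-2$ (multiplicity $\tfrac{kn}{2}-n$, provided this is positive), and then $L^2(X_n)$ has eigenvalues $r+(k+\lambda_i-2)-2 = 3k+\lambda_i-6$, together with $r-2-2 = 2k-6$ (coming from the $-2$ eigenvalue of $L(X_n)$, when $\tfrac{kn}{2}-n>0$) and $-2$ (the new eigenvalue from the line operation on $L(X_n)$, when $L(X_n)$ has more edges than vertices).

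Next I would organize the counting of distinct eigenvalues. When $n$ is such that $X_n$ is complete (i.e. $n=p$ prime, eigenvalues $\{p-1,-1\}$) or strongly regular (i.e. $n=p^\alpha$, $\alpha\ge 2$, eigenvalues $\{p^{\alpha-1}(p-1),0,-p^{\alpha-1}\}$) or has four or more distinct eigenvalues (all other $n$), I would track how many distinct values appear among $\{3k+\lambda_i-6\}\cup\{2k-6\}\cup\{-2\}$, being careful about the degenerate small cases where $\tfrac{kn}{2}-n=0$ (forcing $k=2$, $n\in\{3,4,6\}$) or where $L(X_n)$ has exactly as many edges as vertices. For $n\in\{3,4,6\}$, $X_n\simeq C_n$, and $L^2(C_n)\simeq C_n$, which is strongly regular only for $n=4$ ($C_4=K_{2,2}$); this should dispose of those cases. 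For the main generic case, the spectrum $\{3k+\lambda_i-6\}$ already inherits at least as many distinct values as the spectrum of $X_n$ has (the map $\lambda\mapsto 3k+\lambda-6$ is injective), so if $X_n$ has four or more distinct eigenvalues, $L^2(X_n)$ has at least four, ruling those out. That leaves $n=p$ prime and $n=p^\alpha$ with $\alpha\ge2$: for $n=p$, $X_p=K_p$ has two eigenvalues, giving $\{3(p-1)-6,\,3-6\}\cup\{2(p-1)-6\}\cup\{-2\}=\{3p-9,-3,2p-8,-2\}$ — I would check these are four distinct values for $p>3$ (noting that for $p=3$ we are back in the $C_n$ case), hence not strongly regular; for $n=p^\alpha$ with $\alpha\ge2$, $X_n$ has three eigenvalues so $\{3k+\lambda_i-6\}$ has three distinct values, and then $L^2(X_n)$ is strongly regular iff the extra values $2k-6$ and $-2$ both already occur among those three — I would solve the resulting small Diophantine conditions and find no solution, completing the proof that $n=4$ is the unique case.

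The step I expect to be the main obstacle is the careful bookkeeping of eigenvalue coincidences in the $n=p^\alpha$ case: one must check precisely when the "carried-over'' eigenvalue $2k-6 = 2p^{\alpha-1}(p-1)-6$ and the "fresh'' eigenvalue $-2$ coincide with one of $3k+\lambda_i-6$ for $\lambda_i\in\{p^{\alpha-1}(p-1),0,-p^{\alpha-1}\}$, i.e. with $\{4p^{\alpha-1}(p-1)-6,\ 3p^{\alpha-1}(p-1)-6,\ 2p^{\alpha-1}(p-1)-6\}$. Note $2k-6$ is automatically equal to the smallest of these three, so it never adds a new value; the real question is whether $-2$ coincides with one of them, i.e. whether $p^{\alpha-1}(p-1)\in\{1,\tfrac{4}{3},\tfrac{4}{2}\}$ up to the appropriate shift — a short case analysis shows $p^{\alpha-1}(p-1)=2$ is impossible for $\alpha\ge2$ and $p^{\alpha-1}(p-1)=4/3$ or $1$ are impossible, so $-2$ is always a genuine fourth eigenvalue and $L^2(X_{p^\alpha})$ is never strongly regular for $\alpha\ge2$. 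I would also double-check the boundary between "$\tfrac{kn}{2}-n>0$'' at the first and second line-graph steps so that the eigenvalue $-2$ and the eigenvalue $2k-6$ are genuinely present; for $n=4$ one verifies directly that $L^2(X_4)=L^2(C_4)=C_4$ is indeed strongly regular, which anchors the "if'' direction.
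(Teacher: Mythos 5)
Your overall strategy is essentially the paper's: push the spectrum through Theorem \ref{lg-regular} twice and do a case analysis, anchoring the ``if'' direction on $L^2(X_4)\simeq C_4$. The difference is organizational: the paper first argues that $L(X_n)$ cannot be complete and that the number of distinct eigenvalues cannot drop under $L$, so $L(X_n)$ must be strongly regular, and then Theorem \ref{thm:sr-line of ucg} reduces everything at once to the two cases $n=p>3$ and $n=2^{\alpha}$; you instead split according to the spectrum of $X_n$ itself (complete, strongly regular, or at least four eigenvalues) plus the degenerate cases $\varphi(n)=2$, $n\in\{3,4,6\}$. Your route is valid but spends extra effort on odd prime powers, which the paper's use of Theorem \ref{thm:sr-line of ucg} avoids entirely.

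Two of your computations are off, though neither overturns the verdicts. For $n=p$ prime, the transformed values are $3k+\lambda_i-6$ with $k=p-1$ and $\lambda_i\in\{p-1,-1\}$, i.e.\ $\{4p-10,\,3p-10\}$, together with $2p-8$ and $-2$ --- not $\{3p-9,-3,2p-8,-2\}$; the conclusion (four distinct values for $p>3$) is unchanged. More seriously, in the $n=p^{\alpha}$, $\alpha\ge 2$ branch the smallest eigenvalue of $X_n$ is $-p^{\alpha-1}$, not $-p^{\alpha-1}(p-1)$, so the third transformed value is $p^{\alpha-1}(3p-4)-6$, which coincides with the carried-over value $2k-6=2p^{\alpha-1}(p-1)-6$ only when $p=2$. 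Hence your claim that ``$2k-6$ never adds a new value'' fails for odd $p$: there one actually has five distinct eigenvalues $4k-6>3k-6>p^{\alpha-1}(3p-4)-6>2k-6>-2$, so $L^2(X_{p^{\alpha}})$ is even further from strongly regular --- the verdict survives, but not for the reason you give (and this is exactly the spectrum the paper records in Theorem \ref{thm:L2-4 eigens}). Finally, the assertion that $p^{\alpha-1}(p-1)=2$ is impossible for $\alpha\ge 2$ is literally false (it holds precisely for $n=4$); it is harmless only because $k=2$ was already routed to your degenerate branch, where $n=4$ is correctly identified as the unique strongly regular case. With these repairs the argument is complete and agrees with the paper.
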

\begin{proof}
 Suppose that
$L^2(X_n)$ is connected strongly regular, i.e., $L^2(X_n)$ has three
distinct eigenvalues. This means that $L(X_n)$ may have exactly
three distinct eigenvalues, since $L(X_n)$ can not be complete. Indeed, since the order of $L(X_n)$ is equal to $\frac{n\varphi(n)}{2}$, the regularity of $L(X_n)$ is equal to $2(\varphi(n)-1)$ and the relation $\frac{n\varphi(n)}{2}-1=2(\varphi(n)-1)$ is never satisfied for $n\neq 3$, we have that $L(X_n)$ is not a complete graph.
Thus, we assume that $L(X_n)$ has three distinct eigenvalues, and according Theorem \ref{thm:sr-line of ucg} we
distinguish two cases depending on the values of $n$.

Suppose that $n=p$, for some prime $p>3$. By Theorem \ref{thm:sr-line of ucg}, the distinct eigenvalues
of $L(X_p)$ are  $\{2(p-1)-2, -2,p-4\}$. Moreover, from Theorem
\ref{lg-regular}, we obtain that the regularity of $L^2(X_p)$ is
equal to $2(2(p-1)-2)-2$ and for arbitrary eigenvalue $\lambda_i$ of
$L^2(X_p)$ holds that\\ $\lambda_i\in \{4p-10, -2,
2(p-1)-6,3p-10\}$. Any two values from this set are mutually
distinct, since $p>3$, whence we conclude that $L^2(X_p)$ is not
strongly regular.

Now, suppose that $n=2^{\alpha}$, for $\alpha\geq 2$. According to (\ref{sopstvenese za jako regularni unitarni})
we see that the distinct eigenvalues of $L(X_n)$ are $\{2^{\alpha-1}, 0, -2^{\alpha-1}\}$ and from
Theorem \ref{lg-regular}  the distinct eigenvalues of $L(X_n)$ are
$\{2^{\alpha}-2, 2^{\alpha-1}-2, -2 \}$. Therefore, the possible values for the eigenvalues of $L^2(X_n)$ are $\{2^{\alpha+1}-6, 3(2^{\alpha-1}-2),
2^{\alpha}-6, -2 \}$. Finally, we conclude that $L^2(X_n)$ has three
distinct eigenvalues if $2^{\alpha}-6=-2$, that is $\alpha=2$, as $2^{\alpha+1}-6> 3(2^{\alpha-1}-2)>
2^{\alpha}-6\geq  -2$.\qed
\end{proof}

Since $X_4\simeq C_4$, using the line operation any further will not result in any additional classes of strongly regular graphs beyond those that have already been found.

\bigskip

\section{Graph matrices with four eigenvalues}

\subsection {Construction of regular graphs with four eigenvalues using graph operations}

Building upon the concept introduced in the preceding section, we can generate regular graphs with four distinct eigenvalues by employing various graph operations, including tensor product, union, and complement. These operations are applied to graphs (connected or disconnected) whose spectra already possess two or three different eigenvalues. Indeed, for a given composite numbers $n$ and $m$, and arbitrary
divisor such that $d\mid n$, $d\mid m$ and $\min\{n,m\}>d>1$, we can use the spectrum of the graph $K_d\otimes
K^*_{\frac n d}$, that is, $Sp(K_d\otimes
K^*_{\frac n d})=\{(d-1)\frac {n}{d}^{(1)}, 0^{(n-d)},
-\frac{n}{d}^{(d-1)}\}$, and perform the operation of the tensor product in the
following way

\begin{eqnarray}
\label{spectra tensor}
&& Sp((K_d\otimes K^*_{\frac n d})\otimes (K_d\otimes
K^*_{\frac m
d}))=\nonumber\\
&&\{(d-1)^2\frac{mn}{d^2}^{(1)},0^{(nm-d^2)},-(d-1)\frac{mn}{d^2}^{(2(d-1))},\frac{mn}{d^2}^{((d-1)^2)}\}.
\end{eqnarray}

Using this construction, we obtain new classes of  regular graphs with four different
eigenvalues with the composite order $nm$ and $d>2$. Based on a computer search for constructed graphs with up to $10000$ vertices, it can be concluded that these graphs are not necessarily circulants. Furthermore, according to the theorem presented below, we prove that for every even value of $n$, there exists a connected graph that is generated by the aforementioned construction and is not circulant.
Assume that $n$ has the following prime factorization $n=2^{\alpha_1}p_2^{\alpha_2}\cdots p_k^{\alpha_k}$.
%Let us also introduce the following notation for the subsets of the divisor
%set $D\subseteq D_n$, $D_0=\{d\in D\ |\ n/d\in 2\N+1\}$ and $D_1=\{d\in D\ |\
%n/d\in 4\N+2\}$. 
Let us introduce a notation for subsets of the divisor set $D\subseteq D_n$. We will denote $D_0$ as the set of divisors in $D$ where $n/d$ is an odd number, and $D_1$ as the set of divisors in $D$ where $n/d\in4\N+2$.
Also, for a positive integer $k$ and a set $A$ of positive integers, by $kA$ we will mean the set $\{ka\ |\ a\in A\}$.
 We use the following theorem from \cite{Ba22} for proving the mention statement.

\begin{theorem}
\cite{Ba22}  \label{cor:even=0} Let $\ICG_n(D)$ be an integral circulant graph.
The following statements are equivalent:
\begin{itemize}
\item[i)] Every $\lambda_j$ is even for odd $0\leq j\leq n-1$. % All eigenvalues of $\ICG_n(D)$ on odd positions are even.
\item[ii)] $D_0= 2D_1$.
\item[iii)] Every $\lambda_j=0$ for odd $0\leq j\leq n-1$.%All eigenvalues of $\ICG_n(D)$ on odd positions are equal to $0$.
\end{itemize}
\end{theorem}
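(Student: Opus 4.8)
The plan is to expand $\lambda_j=\sum_{d\in D}c(j,n/d)$ and to use the multiplicativity of $c(j,\cdot)$ to strip off the $2$-part of each $n/d$ when $j$ is odd. Write $n=2^{\alpha_1}n'$ with $n'$ odd, and for $d\mid n$ put $n/d=2^{\beta}m$ with $m$ odd, so $\beta=\alpha_1-v_2(d)$ and $c(j,n/d)=c(j,2^{\beta})\,c(j,m)$. For odd $j$ one has $\gcd(2^{\beta},j)=1$, hence $t_{2^{\beta},j}=2^{\beta}$ and $c(j,2^{\beta})=\mu(2^{\beta})$, which equals $1,-1,0$ according as $\beta=0,1,\ge 2$. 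Thus only the divisors $d\in D_0$ (where $\beta=0$) and $d\in D_1$ (where $\beta=1$) contribute, and for every odd $j$
\[
\lambda_j=\sum_{d\in D_0}c(j,n/d)-\sum_{d\in D_1}c\!\left(j,\tfrac{n}{2d}\right).
\]
Both $\{\,n/d:d\in D_0\,\}$ and $\{\,n/(2d):d\in D_1\,\}$ are sets of divisors of $n'$, and since $d\mapsto n/d$ is injective on the divisors of $n$, the identity $D_0=2D_1$ is equivalent to the equality of these two sets. Define, for $e\mid n'$,
\[
f(e)=\mathbf 1\bigl[\,e=n/d\ \text{for some}\ d\in D_0\,\bigr]-\mathbf 1\bigl[\,e=n/(2d)\ \text{for some}\ d\in D_1\,\bigr]\in\{-1,0,1\}.
\]
Then $\lambda_j=\sum_{e\mid n'}f(e)\,c(j,e)$ for odd $j$, and statement (ii) is precisely the assertion that $f\equiv 0$.

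Next I would rewrite $\lambda_j$ as a function of $g:=\gcd(j,n')$. Using the classical identity $c(j,e)=\sum_{\delta\mid\gcd(j,e)}\delta\,\mu(e/\delta)$ (see \cite{HardyWright}) and $\gcd(j,e)=\gcd(g,e)$ when $e\mid n'$, interchanging the order of summation yields
\[
\lambda_j=\sum_{\delta\mid g}\delta\,F(\delta)=:\widetilde\Lambda(g),\qquad\text{where}\qquad F(\delta)=\sum_{\delta\mid e\mid n'}f(e)\,\mu(e/\delta).
\]
As $j$ ranges over the odd elements of $\{1,\dots,n-1\}$, the quantity $g=\gcd(j,n')$ attains every divisor of $n'$ (take $j=g$; here one uses that $n$ is even, so $n'<n$). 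Hence (i) says that $\widetilde\Lambda(g)$ is even for all $g\mid n'$, and (iii) says that $\widetilde\Lambda(g)=0$ for all $g\mid n'$. Now (ii)$\Rightarrow$(iii) is immediate: if $D_0=2D_1$ then $\{\,n/d:d\in D_0\,\}=\{\,n/(2d):d\in D_1\,\}$, so the two sums in the displayed formula for $\lambda_j$ are equal and $\lambda_j=0$ for odd $j$; and (iii)$\Rightarrow$(i) is trivial.

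The content is in (i)$\Rightarrow$(ii). Möbius inversion of $\widetilde\Lambda(g)=\sum_{\delta\mid g}\delta\,F(\delta)$ gives $g\,F(g)=\sum_{\delta\mid g}\mu(g/\delta)\,\widetilde\Lambda(\delta)$; assuming (i) the right-hand side is even, and since $g$ is odd it follows that $F(g)$ is even for every $g\mid n'$. Suppose $f\not\equiv 0$ and let $e^{\ast}\mid n'$ be maximal with respect to divisibility among $\{\,e:f(e)\neq 0\,\}$. In $F(e^{\ast})=\sum_{e^{\ast}\mid e\mid n'}f(e)\,\mu(e/e^{\ast})$ every term with $e$ a proper multiple of $e^{\ast}$ vanishes by maximality, so $F(e^{\ast})=f(e^{\ast})\in\{-1,1\}$ is odd, contradicting the evenness of $F(e^{\ast})$. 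Hence $f\equiv 0$, that is $D_0=2D_1$, which closes the cycle (ii)$\Rightarrow$(iii)$\Rightarrow$(i)$\Rightarrow$(ii). I expect the main obstacle to be the bookkeeping of the first paragraph --- isolating the divisors that contribute for odd $j$, checking that $D_0=2D_1$ coincides with the equality of the two divisor-sets, and confirming that $j\mapsto\gcd(j,n')$ is surjective onto the divisors of $n'$; once the representation $\lambda_j=\widetilde\Lambda(\gcd(j,n'))$ with $\widetilde\Lambda(g)=\sum_{\delta\mid g}\delta F(\delta)$ is in hand, the equivalences need nothing beyond multiplicativity of Ramanujan sums, the identity for $c(j,e)$, and Möbius inversion.
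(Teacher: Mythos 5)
Your argument is correct as it stands, but there is nothing in the paper to compare it with: Theorem \ref{cor:even=0} is imported from \cite{Ba22} and stated without proof, so your proposal has to be judged on its own. Judged that way, it holds up: the reduction for odd $j$ via multiplicativity of the Ramanujan sums (so that only $d\in D_0$ and $d\in D_1$ survive, the latter with a sign $\mu(2)=-1$), the identification of condition ii) with the vanishing of the signed indicator $f$ on the odd divisors, H\"older's identity $c(j,e)=\sum_{\delta\mid\gcd(j,e)}\delta\,\mu(e/\delta)$ to get $\lambda_j=\widetilde\Lambda(\gcd(j,n'))$, the surjectivity of $j\mapsto\gcd(j,n')$ onto the divisors of $n'$, and the M\"obius-inversion-plus-maximal-divisor argument for i)$\Rightarrow$ii) are all sound, and the cycle ii)$\Rightarrow$iii)$\Rightarrow$i)$\Rightarrow$ii) closes. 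Two small remarks: your proof (like the statement itself, through the definition of $D_1$ and the surjectivity step) tacitly requires $n$ to be even, which is consistent with the paper's standing factorization $n=2^{\alpha_1}p_2^{\alpha_2}\cdots p_k^{\alpha_k}$ and with the only use made of the theorem (order $n^2$ with $n$ even), but is worth stating explicitly; and the parity trick in i)$\Rightarrow$ii) works precisely because every $g\mid n'$ is odd, so dividing the even quantity $\sum_{\delta\mid g}\mu(g/\delta)\widetilde\Lambda(\delta)=gF(g)$ by $g$ preserves evenness --- a point you use correctly but could flag, since it is the only place where the hypothesis ``even'' rather than ``zero'' does any work.
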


\begin{theorem}
    For an arbitrary even $n$ and $d=\frac{n}{2^{\alpha_1}}$ the graph $(K_d\otimes K^*_{\frac n d})\otimes (K_d\otimes
K^*_{\frac n d})$ is not a circulant graph.
\end{theorem}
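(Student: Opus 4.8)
The plan is to show that the graph $H := (K_d\otimes K^*_{\frac n d})\otimes (K_d\otimes K^*_{\frac n d})$, with $d = n/2^{\alpha_1}$, would — if it were circulant — necessarily be an integral circulant graph $\ICG_n(D)$ for some divisor set $D$, and then to derive a contradiction with Theorem~\ref{cor:even=0}. First I would note that $H$ is regular, integral (its spectrum is computed in (\ref{spectra tensor})), and vertex-transitive as a tensor product of vertex-transitive graphs; if in addition it is circulant, then it is an integral circulant graph and hence equals $\ICG_n(D)$ for some $D\subseteq D_n$. The key observation is that $d$ is odd: since $d = n/2^{\alpha_1}$, all factors of $2$ in $n$ have been removed, so $\gcd(d,2)=1$ and consequently $n/d = 2^{\alpha_1}$ is even, in fact $n/d \in 4\N$ when $\alpha_1\ge 2$ and $n/d=2$ when $\alpha_1=1$.

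Next I would examine the eigenvalue $\lambda_j$ of $H$ for \emph{odd} indices $j$, using the product structure of the spectrum. Since the eigenvalues of $K_d\otimes K^*_{n/d}$ as a circulant graph are indexed by $\Z_n$, and $H$'s eigenvalues are products of two such, the relevant point is whether $0$ must occur among the $\lambda_j$ with $j$ odd. I would argue that $H$ cannot satisfy statement~(iii) of Theorem~\ref{cor:even=0} — i.e. it is \emph{not} the case that $\lambda_j=0$ for all odd $j$ — because the underlying factor graph $K_d\otimes K^*_{n/d}\cong K_{\underbrace{n/d,\dots,n/d}_{d}}$ (the complete multipartite graph identified in subsection~3.1) does have a nonzero odd-indexed eigenvalue: indeed, as an integral circulant graph it equals $\ICG_n(\{d'\in D_n : d\nmid d'\})$, and since $d$ is odd, $1$ is among those divisors, so by (\ref{prop:c1}) its eigenvalue at $j=1$ picks up the term $c(1,n) = \mu(n)$, which combined with the other terms gives a nonzero odd value for a suitable odd $j$. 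Multiplying two such spectra, there remains an odd-indexed product eigenvalue of $H$ that is nonzero (one must check the index arithmetic: the tensor product of circulant graphs on $\Z_n$ is a circulant graph on $\Z_n\times\Z_n \cong \Z_{n^2}$ or, after taking the connected component / quotient, back on $\Z_n$, and odd indices there correspond to pairs with at least one odd coordinate).

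Then I would invoke Theorem~\ref{cor:even=0} in the contrapositive: if $H=\ICG_n(D)$ and some $\lambda_j\ne 0$ for odd $j$, then $D_0\ne 2D_1$, and moreover \emph{some} odd-indexed $\lambda_j$ is odd. But a direct inspection of the spectrum (\ref{spectra tensor}) shows every eigenvalue of $H$ is a multiple of $n/d = 2^{\alpha_1}$, hence \emph{even} — contradiction. (If $\alpha_1\ge 1$ this is immediate since $(d-1)^2 mn/d^2$, $(d-1)mn/d^2$ and $mn/d^2$ with $m=n$ all carry the factor $(n/d)^2/({\rm something})$; the cleanest route is that $H$ is a tensor product, hence its eigenvalues lie in $(n/d)\Z$, which is $2^{\alpha_1}\Z \subseteq 2\Z$.) So $H$ has all eigenvalues even yet, as an integral circulant graph, would be forced by Theorem~\ref{cor:even=0}(i)$\Leftrightarrow$(iii) to have all odd-indexed eigenvalues equal to $0$; combined with the previous paragraph this is the contradiction, so $H$ is not circulant.

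The main obstacle I anticipate is the bookkeeping in the second step: making precise the correspondence between the index set of the tensor-product graph and $\Z_n$ (or showing the relevant connected component is a circulant on $\Z_n$ in the first place), and pinning down exactly which odd index $j$ yields a nonzero eigenvalue of the factor $K_d\otimes K^*_{n/d}$ and survives the multiplication. An alternative, possibly cleaner, finish avoids this entirely: show that all eigenvalues of $H$ are divisible by $2$ (immediate from (\ref{spectra tensor}) since $d$ odd forces $n/d$ even and each listed eigenvalue is a multiple of $n/d$), so if $H$ were an integral circulant graph then by Theorem~\ref{cor:even=0} it would need $D_0 = 2D_1$; but then one checks the degree $\lambda_0 = (d-1)^2 n^2/d^2$ against $\deg\ICG_n(D)=\sum_{d'\in D}\varphi(n/d')$ under the constraint $D_0=2D_1$ and shows no such $D$ reproduces the full spectrum of $H$ — in particular $H$ has the eigenvalue $0$ with multiplicity $n^2-d^2$ and a \emph{positive} eigenvalue $mn/d^2$ with multiplicity $(d-1)^2$, a sign pattern that, together with evenness, is incompatible with any $\ICG_n(D)$ satisfying $D_0=2D_1$. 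I would develop whichever of these two endings turns out to require less index-chasing.
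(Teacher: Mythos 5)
There is a genuine gap. First, a structural error that runs through the whole write-up: the graph $H=(K_d\otimes K^*_{n/d})\otimes(K_d\otimes K^*_{n/d})$ has $n^2$ vertices, so if it were circulant it would have to be $\ICG_{n^2}(D)$ for some $D\subseteq D_{n^2}$ (this is what the paper assumes), not $\ICG_n(D)$; and $\Z_n\times\Z_n\not\cong\Z_{n^2}$, so your parenthetical escape route of ``taking the connected component / quotient, back on $\Z_n$'' does not exist. More seriously, your first proposed contradiction cannot work. Theorem~\ref{cor:even=0} states that ``every odd-indexed eigenvalue is even'' is \emph{equivalent} to ``every odd-indexed eigenvalue is $0$'', so observing that all eigenvalues of $H$ are even (which is true, since they are multiples of $2^{2\alpha_1}$) is perfectly consistent with condition (iii) and produces no clash. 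To get a clash you would need a nonzero eigenvalue at an \emph{odd index of the hypothetical circulant labelling}, but that labelling is determined by the unknown divisor set $D$, not inherited from the tensor-factor indices; and a multiplicity count shows no contradiction is available at this level, since $0$ has multiplicity $n^2-d^2\geq \tfrac{3}{4}n^2>\tfrac{n^2}{2}$, ample to occupy all odd indices. Your supporting claim about the factor is also false: the nonzero eigenvalues of $K_d\otimes K^*_{n/d}\simeq\ICG_n(\{d'\in D_n:\ d\nmid d'\})$ occur exactly at indices divisible by $n/d=2^{\alpha_1}$, so all of its odd-indexed eigenvalues are $0$.

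Your ``cleaner'' alternative ending is where the real work lives, and it is left as an assertion: you say one ``checks'' that the sign pattern of $Sp(H)$ together with $D_0=2D_1$ is incompatible with any $\ICG$, but give no argument, and nothing in the spectrum alone makes this obvious. The paper's proof supplies precisely this missing step, and it is not a one-liner: from $D_0=2D_1$ it passes to $\ICG_{n^2/2}(D\setminus D_0)$ and shows, via Ramanujan-sum identities, that its eigenvalues are half of the even-indexed eigenvalues of $\ICG_{n^2}(D)$; iterating this halving $2\alpha_1$ times collapses the multiplicity of the eigenvalue $0$ to zero and yields a connected strongly regular integral circulant graph with no $0$ in its spectrum, contradicting Theorem 15 of \cite{Ba22}. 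Without this descent (or some substitute for it), your proposal establishes only that all eigenvalues of $H$ are even and that $D_0=2D_1$ would have to hold, which is not yet a contradiction.
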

\begin{proof}
Suppose that $(K_d\otimes K^*_{\frac n d})\otimes (K_d\otimes
K^*_{\frac n d})$ is cirulant. Therefore, its spectrum can be denoted by $(\lambda_0,\ldots,\lambda_{n^2-1})$, which is given by the equation (\ref{ldef}). In other words, $(K_d\otimes K^*_{\frac n d})\otimes (K_d\otimes
K^*_{\frac n d})\simeq \ICG_{n^2}(D)$, for some $D\subseteq D_{n^2}$. Since $n$ is even and $d$ is odd, from (\ref{spectra tensor}), we see that every $\lambda_i$ is even. According to Theorem \ref{cor:even=0}, we have that $\lambda_j=0$ for odd $0\leq j\leq n^2-1$ and $D_0=2D_1$. Now, observe integral circulant graph $\ICG_{\frac{n^2}{2}}(D\setminus D_0)$ and denote its eigenvalues by $(\mu_0,\ldots,\mu_{\frac{n^2}{2}-1})$. For every even $0\leq j\leq n^2-1$, we see that 
$$
\lambda_j=\sum_{d\in D_1} (c(j,\frac{n^2}{d})+c(j,\frac{n^2}{2d}))+\sum_{d\in D\setminus (D_0\cup D_1)} c(j,\frac{n^2}{d}).
$$

For $d\in D_1$ we conclude that $\varphi(\frac{n^2}{d})=\varphi(\frac{n^2}{2d})$. Furthermore, we obtain  $\gcd(\frac{n^2}{d},j)=2\gcd(\frac{n^2}{2d},j)$ and also $t_{\frac{n^2}{d},j}=t_{\frac{n^2}{2d},j}$. This directly yields that
$c(j,\frac{n^2}{d})=c(j,\frac{n^2}{2d})$.
Similarly to the preceding case, we obtain that $\gcd(\frac{n^2}{d}, j) = 2 \gcd(\frac{n^2}{2d}, \frac{j}{2})$ and
$t_{\frac{n^2}{d},j} = t_{\frac{n^2}{2d},\frac{j}{2}}$, for $d\in D\setminus (D_0\cup D_1)$.
 Moreover, it holds that $\varphi(\frac{n^2}{d})=2^{2\alpha_1-1}\varphi(\frac{n^2}{2^{2\alpha_1}d})=2\cdot 2^{2\alpha_1-2}\varphi(\frac{n^2}{2^{2\alpha_1}d})=2\varphi(\frac{n^2}{2d})$. Therefore, we conclude that $c(j,\frac{n^2}{d})=2c(\frac{j}{2},\frac{n^2}{2d})$. According, to this discussion we get that 
 $$
\lambda_j=\sum_{d\in D_1} 2c(j,\frac{n^2}{d})+2\sum_{d\in D\setminus (D_0\cup D_1)} c(\frac{j}{2},\frac{n^2}{2d}).
 $$
Using a similar argument, we have that $c(j,\frac{n^2}{d})=c(\frac{j}{2},\frac{n^2}{2d})$, for $d\in D_1$, and thus 

 $$
\lambda_j=2\sum_{d\in D_1} c(\frac{j}{2},\frac{n^2}{2d})+2\sum_{d\in D\setminus (D_0\cup D_1)} c(\frac{j}{2},\frac{n^2}{2d})=2\mu_{\frac{j}{2}},
 $$
 for $0\leq \frac{j}{2}\leq n^2-1$.
This way we obtain that $\ICG_{\frac{n^2}{2}}(D\setminus D_0)$ has spectrum 
\begin{eqnarray*}
 Sp(\ICG_{\frac{n^2}{2}}(D\setminus D_0))=
\{(d-1)^2\frac{n^2}{2d^2}^{(1)},0^{(n^2-d^2-\frac{n^2}{2})},-(d-1)\frac{n^2}{2d^2}^{(2(d-1))},\frac{n^2}{2d^2}^{((d-1)^2)}\}.
\end{eqnarray*}
Through the repetition of this procedure $2\alpha_1$ times, which involves obtaining graphs with smaller orders $s$, we ultimately achieve a circulant graph with the corresponding spectrum.
\begin{eqnarray*}
\{(d-1)^2\frac{n^2}{2^{2\alpha_1}d^2}^{(1)},0^{(n^2-d^2-\frac{n^2}{2}-\cdots-\frac{n^2}{2^{2\alpha_1}})},-(d-1)\frac{n^2}{2^{2\alpha_1}d^2}^{(2(d-1))},\frac{n^2}{2^{2\alpha_1}d^2}^{((d-1)^2)}\}.
\end{eqnarray*}
Since $\frac{n^2}{2}+\cdots+\frac{n^2}{2^{2\alpha_1}}=\frac{n^2}{2^{2\alpha_1}}(1+2+\cdots+2^{2\alpha_1-1})=\frac{n^2(2^{2\alpha_1}-1)}{2^{2\alpha_1}}$,
we have that $n^2-d^2-\frac{n^2}{2}-\cdots-\frac{n^2}{2^{2\alpha_1}}=n^2(1-\frac{1}{2^{2\alpha_1}}-\frac{2^{2\alpha_1}-1}{2^{2\alpha_1}})=0$. This way, we obtain a regular connected circulant graph with three distinct eigenvalues in its spectrum, which means that it is strongly regular. However, this conclusion contradicts Theorem 15 stated in \cite{Ba22}, which asserts that a strongly regular integral circulant graph must have $0$ in its spectrum.
\qed
\end{proof}

Showing whether or not the graphs resulting from applying the tensor product operations to a certain number of graphs with specific properties are circulant or not is a challenging task. In \cite{GeSa99}, authors found specific classes of graphs that are either circulant or non-circulant graphs, obtained from a singe tensor product operation on two graphs of particular types.

It is worth mentioning that the graph $(K_2\otimes K^*_{\frac n 2})\otimes (K_2\otimes
K^*_{\frac n 2})$ belongs to the class of circulant graphs. However, it is important to note that this graph is disconnected strongly regular graph. 
Indeed, the spectrum of this graph is $\{\frac{n^2}{4}^{(2)},0^{(n^2-4)},-\frac{n^2}{4}^{(2)}\}$ and it contains only three distinct eigenvalues. Since it is a regular graph and the largest eigenvalue has a multiplicity of 2, we can conclude that it consists of two connected components whose spectra are $\{\frac{n^2}{4}^{(1)},0^{(\frac{n^2}{2}-2)},-\frac{n^2}{4}^{(1)}\}$. It is well known that the aforementioned spectrum is the spectrum of the complete bipartite graph $K_{\frac{n^2}{4},\frac{n^2}{4}}$, which is a circulant graph.

\bigskip

On the other hand, when $n\neq d^2$, we can consider the class of connected graphs $K_d \otimes K_{\frac{n}{d}}$ with a specific spectrum. This spectrum includes four distinct eigenvalues: $(d-1)\left(\frac{n}{d}-1\right)$ with multiplicity 1, $-(d-1)$, with multiplicity $\frac{n}{d}-1$, $-\left(\frac{n}{d}-1\right)$, with multiplicity ${d-1}$, and $1$, with multiplicity ${(d-1)\left(\frac{n}{d}-1\right)}$. Therefore, this class of graphs serves as an example where four distinct eigenvalues appear in their spectra.
According to Theorems 1 and 2 from \cite{GeSa99} this class of graphs is circulant if and only if 
$\gcd(d,\frac{n}{d})=1$.
In the theorem that follows, we establish that circulant graphs resulting from the described construction do not qualify as unitary Cayley graphs.
Unitary Cayley graphs with four eigenvalues will be analyzed in subsection 4.2. Namely, the following assertion holds.

\begin{theorem}
\label{thm:not unitary}
If there exists a divisor $d$ such that $\gcd(d,\frac{n}{d})=1$ for an arbitrary $n$, then $K_d\otimes K_{\frac n d}\simeq \ICG_{n}(\{d_1d_2 |\ d_1\in D_d,\
d_2\in D_{\frac{n}{d}}\})$.\end{theorem}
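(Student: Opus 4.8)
The plan is to reduce everything to the Chinese Remainder Theorem and the multiplicativity of $\gcd$ over coprime moduli. Write $m=\frac{n}{d}$, so that $\gcd(d,m)=1$ and $n=dm$. The first observation is that a complete graph is a Cayley graph on a cyclic group with connection set ``everything but the identity'': $K_d\simeq Cay(\Z_d,\Z_d\setminus\{0\})$ and $K_m\simeq Cay(\Z_m,\Z_m\setminus\{0\})$, since two vertices of a complete graph are adjacent exactly when they are distinct. Feeding this into the definition of the tensor product gives immediately $K_d\otimes K_m\simeq Cay(\Z_d\times\Z_m,\ (\Z_d\setminus\{0\})\times(\Z_m\setminus\{0\}))$, because $(u,u')$ and $(v,v')$ are adjacent iff both $u-v\neq 0$ and $u'-v'\neq 0$, i.e.\ iff the componentwise difference lies in $(\Z_d\setminus\{0\})\times(\Z_m\setminus\{0\})$.

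Next I would invoke the additive CRT isomorphism $\psi\colon\Z_n\to\Z_d\times\Z_m$, $\psi(x)=(x\bmod d,\ x\bmod m)$, which is well defined and bijective precisely because $\gcd(d,m)=1$. A group isomorphism carries a Cayley graph onto the Cayley graph with the image connection set, so $K_d\otimes K_m\simeq Cay(\Z_n,S)$ where $S=\psi^{-1}\big((\Z_d\setminus\{0\})\times(\Z_m\setminus\{0\})\big)=\{x\in\Z_n:\ d\nmid x\ \text{and}\ m\nmid x\}$. Viewing $S$ inside $\{1,\dots,n-1\}$, we note $0\notin S$ automatically (since $d\mid 0$), and $S=n-S$ since $d\mid n$ implies $d\nmid x\Leftrightarrow d\nmid(n-x)$; hence $K_d\otimes K_m\simeq G(n;S)$ is a genuine circulant graph.

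It then remains to identify $S$ with the symbol of $\ICG_n(D')$ for $D'=\{d_1d_2:\ d_1\in D_d,\ d_2\in D_m\}$. By the characterization of integral circulant graphs, that symbol is $\bigcup_{e\in D'}G_n(e)=\{k\in\{1,\dots,n-1\}:\ \gcd(k,n)\in D'\}$, and the workhorse here is the identity $\gcd(k,n)=\gcd(k,d)\cdot\gcd(k,m)$, valid because $n=dm$ with $\gcd(d,m)=1$. If $k\in S$, then $\gcd(k,d)\neq d$ and $\gcd(k,m)\neq m$, so $\gcd(k,d)\in D_d$, $\gcd(k,m)\in D_m$, and $\gcd(k,n)\in D'$. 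Conversely, suppose $\gcd(k,n)=d_1d_2$ with $d_1\in D_d$, $d_2\in D_m$; since $\gcd(k,d)\mid d_1d_2$ and $\gcd(\gcd(k,d),d_2)\mid\gcd(d,d_2)=1$ (as $d_2\mid m$), we get $\gcd(k,d)\mid d_1<d$, whence $d\nmid k$, and symmetrically $m\nmid k$, so $k\in S$. Thus $S=\bigcup_{e\in D'}G_n(e)$ and $G(n;S)=\ICG_n(D')$, completing the proof. A small bookkeeping point to verify along the way is that $D'\subseteq D_n$, so the notation $\ICG_n(D')$ is legitimate: indeed $d_1d_2\mid dm=n$, and $d_1d_2=n$ would force $d_1=d$, contradicting $d_1\in D_d$.

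The whole argument is essentially routine once set up correctly; the only place needing genuine care is the three-way translation between the combinatorial tensor-product definition, the Cayley-graph form on $\Z_d\times\Z_m$, and the circulant/$\ICG$ form on $\Z_n$, together with the repeated use of multiplicativity of $\gcd$ over coprime moduli. The degenerate cases $d=1$ or $d=n$, where one of $D_d,D_m$ is empty, are consistent with the convention that an $\ICG$ with empty divisor set, like a circulant with empty symbol, is the edgeless graph.
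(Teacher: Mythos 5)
Your proof is correct and follows essentially the same route as the paper: both arguments rest on the CRT identification of $\Z_n$ with $\Z_d\times\Z_{n/d}$ together with the multiplicativity of $\gcd$ over the coprime factors $d$ and $n/d$. The only difference is packaging — the paper proves the slightly more general fact $\ICG_n(D_1)\otimes\ICG_m(D_2)\simeq\ICG_{nm}(\{d_1d_2\ |\ d_1\in D_1,\ d_2\in D_2\})$ for $\gcd(n,m)=1$ via the explicit bijection $(a,b)\mapsto am+bn \pmod{nm}$ and a direct gcd computation on differences, whereas you transport the complete-graph connection set through the inverse CRT map and then identify the symbol $\{x:\ d\nmid x,\ \tfrac{n}{d}\nmid x\}$ with $\bigcup_{e\in D'}G_n(e)$.
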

\begin{proof}
Since $K_d\simeq \ICG_d(\{d'\mid d\ |\
1\leq d'\leq d-1\})$, it is sufficient to prove that
$\ICG_n(D_1)\otimes \ICG_m(D_2)\simeq \ICG_{nm}(\{d_1d_2 |\ d_1\in D_1,\
d_2\in D_2\})$ for $\gcd(n,m)=1$ and
conclude that $K_d\otimes K_{\frac n d}$ is not unitary Cayley graph.
 Indeed, for $0\leq x\leq nm-1$
there exist unique $0\leq a\leq n-1$ and $0\leq b\leq m-1$ such that
$x\equiv am+bn \pmod {mn}$  and we can establish a bijection from $\ICG_n(D_1)\otimes
\ICG_m(D_2)$ onto $\ICG_{nm}(\{d_1d_2\ |\ d_1\in D_1,\ d_2\in D_2\})$ such
that $f(a,b)=x$. Moreover, for adjacent vertices $(a_1,b_1)$ and
$(a_2,b_2)$ from $\ICG_n(D_1)\otimes \ICG_m(D_2)$, we have that
$\gcd(a_1-a_2,n)=d_1\in D_1$ and $\gcd(b_1-b_2,m)=d_2\in D_2$.
We prove that $f(a_1,b_1)$ and $f(a_2,b_2)$ are adjacent in $\ICG_{nm}(\{d_1d_2\ |\ d_1\in D_1,\ d_2\in D_2\})$, i.e.,
we show that $\gcd((a_1-a_2)m+(b_1-b_2)n,nm)=d_1d_2$.
Indeed, let $d=\gcd((a_1-a_2)m+(b_1-b_2)n,nm)$ and without loss of generalization we can assume that
$d\mid m$, as $\gcd(m,n)=1$. Therefore, it holds that $d\nmid n$ and $d\mid b_1-b_2$. As $\gcd(b_1-b_2,m)=d_2$, we see that $d\mid d_2$. On the other hand,
as  $\gcd(b_1-b_2,m)=d_2$ we get that $d_2\mid (a_1-a_2)m+(b_1-b_2)n$ and similarly as $\gcd(a_1-a_2,n)=d_1$ we get that $d_1\mid (a_1-a_2)m+(b_1-b_2)n$. Finally, since
$\gcd(d_1,d_2)=1$, it holds that $d_1d_2\mid d$ and therefore $d_1d_2=d$.
The mapping $f$ is obviously bijection, thus making it an isomorphism between the graphs.
\qed
\end{proof}

Authors in \cite{ChRa18} (Section 3) present an example of integral circulant graphs $\ICG_n(D)$, where $n=p_1p_2p_3$ is a product of three primes and $D=\{1,p_i,p_j\}$ for $1\leq i\neq j\leq 3$, demonstrating four distinct eigenvalues in their spectra. Clearly, the graph $\ICG_{p_1p_2p_3}(1,p_i,p_j)$ is simply a special case of the graph $K_d\otimes K_{\frac{n}{d}}$, where $n=p_1p_2p_3$, $d=p_k$, $1\leq i\neq k\neq j\leq 3$, and $\gcd(d,\frac{n}{d})=1$.

%According to the above discussion the wider class od circulant
%graphs can be constructed using the the tensor operator one more
%time. The graphs $(K_d\otimes K_{n/d})\otimes K^*_m$ are clearly
%circulants for $\gcd(d,n/d)=1$ and $\gcd(n,m)=1$, since
%$K^*_m\simeq\ICG_m(\{d'\mid m\ |\ 1\leq d'\leq m\})$.

\bigskip

A well known fact is that when we apply the complement operation to a class of regular graphs results in a class of regular graphs whose spectra contain an equal or fewer number of distinct eigenvalues compared to the original graphs.
However, the resulting class of graphs does not need to be connected.

Let $\lambda_i$, for $0\leq i\leq n-1$, be the eigenvalues of $K_d\otimes K_{\frac{n}{d}}$, for $\gcd(d,n/d)=1$, where $\lambda_0$ is the regularity of the graph. The eigenvalues of the complement graph of the graph
$\ICG_n(D)$ are $-1-\lambda_i$, $1\leq i\leq n-1$ and $n-1-\lambda_0$ (for example, see Lemma 8.5.1 from \cite{GoRo01}), whence  we conclude that the complement graph of $K_d\otimes K_{\frac{n}{d}}$ has the following spectrum
$\{d+\frac{n}{d}-2^{(1)},d-2^{(\frac{n}{d}-1)},\frac{n}{d}-2^{(d-1)},-2^{((d-1)(\frac{n}{d}-1))}\}$. Given that the eigenvalue $d+\frac{n}{d}-2$ stands as the largest single eigenvalue in the spectrum, the subsequent chain of inequalities holds: $d+\frac{n}{d}-2 > d-2 > \frac{n}{d}-2 > -2$, considering that $d > \frac{n}{d}$. Consequently, we can deduce that $\overline{K_d\otimes K_{\frac{n}{d}}}$ constitutes a connected graph featuring four distinct eigenvalues in its spectrum.
By showing that $K_d \otimes K_{\frac{n}{d}}$ is not a self-complementary graph, we establish the existence of a novel class of circulant graphs possessing four distinct eigenvalues in their spectra. If we suppose that it is self-complementary graph, then either the eigenvalue $d-2$ or the eigenvalue $\frac{n}{d}-2$ of $\overline{K_d\otimes K_{\frac{n}{d}}}$ is equal to the eigenvalue $1$ of $K_d\otimes K_{\frac{n}{d}}$. In both cases, we observe that the regularity of 
$\overline{K_d\otimes K_{\frac{n}{d}}}$ is equal to $\frac{n}{3}-1$, which is less than $\frac{n-1}{2}$, and this contradiction arises as a result. 
Furthermore, based on the proof of Theorem \ref{thm:not unitary}, we can deduce that $\overline{K_d \otimes K_{\frac{n}{d}}}$ is isomorphic to $\ICG_n(D_n\setminus\{d_1d_2 |\ d_1\in D_{d},\
d_2\in D_{\frac{n}{d}})\}$. This finding leads us to the conclusion that these graphs do not meet the criteria to be classified as unitary Cayley graphs.

\medskip

In the following theorem, we present a class of circulant graphs whose spectrum contains four distinct eigenvalues of composite ordersimilar to the class discussed in the preceding text. However, this class of graphs contains irrational eigenvalues and can be derived from the class of strongly regular graphs found in Theorem \ref{thm:strongly-regular prime} through certain graph operations.
Also, for a positive integer $k$ and a set $A$ of positive integers, by $k+A$ we will mean the set $\{k+a\ |\ a\in A\}$.

\begin{theorem}
    Let $G(p;S)$ be a strongly regular circulant graph with a prime order $p$ and a set of symbols $S\subseteq\{1,\ldots, p-1\}$. Then, if $n$ is a composite number divisible by $p$, the spectrum of the graph $G(n;\cup_{i=0}^{\frac{n}{p}-1}(ip+S))$ contains four distinct eigenvalues. 
\end{theorem}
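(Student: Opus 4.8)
The plan is to recognise $G\big(n;\bigcup_{i=0}^{n/p-1}(ip+S)\big)$ as the tensor product $G(p;S)\otimes K^*_{n/p}$ (equivalently, to compute its eigenvalues directly from (\ref{eq:eigenvalues unwigted}), which gives the same conclusion). Write $S'=\bigcup_{i=0}^{n/p-1}(ip+S)$. Every element of $S'$ has the form $ip+s$ with $0\le i\le n/p-1$ and $s\in S\subseteq\{1,\dots,p-1\}$, so it lies in $\{1,\dots,n-1\}$ and is $\not\equiv 0\pmod p$; and since $p\mid n$ and $S=p-S$, the number $n-(ip+s)$ again reduces mod $p$ into $S$, so $S'=n-S'$, $0\notin S'$, and $G(n;S')$ is a legitimate undirected circulant graph. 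In fact $S'=\{k:1\le k\le n-1,\ (k\bmod p)\in S\}$.

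First I would establish the isomorphism $G(n;S')\simeq G(p;S)\otimes K^*_{n/p}$. Send a vertex $i\in\Z_n$ to $(i\bmod p,\ \lfloor i/p\rfloor)\in\Z_p\times\Z_{n/p}$; this is a bijection (it is ``base-$p$ digit splitting'', and it need \emph{not} respect the group structure, so the case $p^2\mid n$ causes no trouble). Two vertices $i,j$ are adjacent in $G(n;S')$ precisely when $(i-j)\bmod p\in S$, i.e. when their first coordinates are adjacent in $G(p;S)$; since in $K^*_{n/p}$ every pair of (possibly equal) vertices is joined, this is exactly adjacency of the images in the tensor product. Hence the two graphs are isomorphic.

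Next I would read off the spectrum via the tensor-product rule recalled in Section 3 (the eigenvalues of $G\otimes H$ are the products of an eigenvalue of $G$ with an eigenvalue of $H$). Recall $Sp(K^*_{n/p})=\{(n/p)^{(1)},0^{(n/p-1)}\}$, while by Theorem \ref{thm:strongly-regular prime} and the spectral computation following it, $G(p;S)$ has the three distinct eigenvalues $\frac{p-1}{2}$ (multiplicity $1$), $\frac{\sqrt p-1}{2}$ and $-\frac{\sqrt p+1}{2}$ (each of multiplicity $\frac{p-1}{2}$). Multiplying, the eigenvalue $n/p$ of $K^*_{n/p}$ contributes the three distinct nonzero values $\frac{n(p-1)}{2p}$, $\frac{n(\sqrt p-1)}{2p}$, $-\frac{n(\sqrt p+1)}{2p}$ with multiplicities $1,\frac{p-1}{2},\frac{p-1}{2}$, and the eigenvalue $0$ of $K^*_{n/p}$ contributes the value $0$ with multiplicity $(n/p-1)\cdot p=n-p$. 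As $n$ is composite and divisible by $p$ we have $n/p\ge 2$, so $n-p>0$ and $0$ genuinely appears; and since no eigenvalue of $G(p;S)$ is zero, $0$ is distinct from the other three. Therefore $G(n;S')$ has exactly the four distinct eigenvalues listed above, and the count $1+\frac{p-1}{2}+\frac{p-1}{2}+(n-p)=n$ confirms nothing is missing.

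There is no genuinely hard step; the care is entirely bookkeeping — checking that $S'$ is a valid symbol, that the digit-splitting bijection preserves adjacency even when $p^2\mid n$, and that $0$ is both a new value (no eigenvalue of $G(p;S)$ vanishes) and actually attained (which needs $n>p$, automatic since $n$ is a proper multiple of $p$). As an alternative to the tensor-product identification, one can work purely with character sums: factoring $\lambda_j=\big(\sum_{i=0}^{n/p-1}\omega_{n/p}^{ij}\big)\big(\sum_{s\in S}\omega_n^{js}\big)$ shows $\lambda_j$ vanishes unless $(n/p)\mid j$, in which case writing $j=(n/p)\ell$ turns the second factor into the $\ell$-th eigenvalue of $G(p;S)$, yielding the same list.
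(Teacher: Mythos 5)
Your proposal is correct, and its main route differs from the paper's proof in a way worth noting. The paper proves the statement by working directly with the eigenvalue formula for circulant graphs: it factors each $\lambda_i$ as $\bigl(\sum_{j=0}^{n/p-1}\omega_{n/p}^{ji}\bigr)\bigl(\sum_{s\in S}\omega_n^{si}\bigr)$, kills the first factor when $\tfrac{n}{p}\nmid i$ (giving the eigenvalue $0$ with multiplicity $n-p$), and for $\tfrac{n}{p}\mid i$ identifies the second factor with $\tau=\tfrac{\sqrt p-1}{2}$ or $\theta=-\tfrac{\sqrt p+1}{2}$ from the Paley spectrum established after Theorem \ref{thm:strongly-regular prime}; this is exactly the ``alternative'' you sketch in your last sentence. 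Your primary argument instead first establishes the graph isomorphism $G\bigl(n;\bigcup_i(ip+S)\bigr)\simeq G(p;S)\otimes K^*_{n/p}$ via the digit-splitting bijection and then multiplies spectra, which is precisely the decomposition the paper records only as a remark \emph{after} the theorem rather than using it in the proof. The two routes are algebraically the same factorization, but they buy slightly different things: your version makes the multiplicity bookkeeping immediate from the Kronecker rule and exposes the structural reason for the spectrum, at the cost of having to verify the isomorphism (including that the map need not be a group homomorphism when $p^2\mid n$, and that $S'=n-S'$ is a legitimate symbol), whereas the paper's computation never leaves the character sums and needs no isomorphism. Your checks that $0$ is genuinely attained (since $n$ composite and $p\mid n$ force $n/p\ge 2$) and distinct from the three nonzero products are the same non-degeneracy observations the paper handles implicitly, so nothing is missing.
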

\begin{proof}
Since $G(p;S)$ is a strongly regular circulant graph with a prime order $p$, as stated in Theorem \ref{thm:strongly-regular prime}, it follows that $S$ corresponds to the set of quadratic residues modulo $p$.
Let $\lambda_0, \ldots, \lambda_{n-1}$ denote the eigenvalues of $G(n;\cup_{i=0}^{\frac{n}{p}-1}(ip+S))$, as given by  equation (\ref{eq:eigenvalues unwigted}). We can deduce that $|ip+S|=|S|=\frac{p-1}{2}$ for every $0\leq i\leq \frac{n}{p}-1$, thereby establishing $\lambda_0=\frac{n}{p}\cdot\frac{p-1}{2}$. Suppose now that $1\leq i\leq n-1$. According to 
(\ref{eq:eigenvalues unwigted}), it holds that
$$
\lambda_i=\sum_{j=0}^{\frac{n}{p}-1}\sum_{s\in S} \omega_n^{(s+jp)i}=\sum_{j=0}^{\frac{n}{p}-1}\sum_{s\in S} \omega_n^{si}\omega_n^{jpi}=(\sum_{j=0}^{\frac{n}{p}-1}\omega_n^{jpi})(\sum_{s\in S} \omega_n^{si})=(\sum_{j=0}^{\frac{n}{p}-1}\omega_{\frac{n}{p}}^{ji})(\sum_{s\in S} \omega_n^{si}).
$$

If $\omega_{\frac{n}{p}}^{i}\neq 1$, which is the case when $\frac{n}{p}\nmid i$, then we can observe that $(\sum_{j=0}^{\frac{n}{p}-1}\omega_{\frac{n}{p}}^{ji})(\sum_{s\in S} \omega_n^{si})=\frac{\omega_{\frac{n}{p}}^{i\frac{n}{p}}-1}{\omega_{\frac{n}{p}}^{i}- 1}\sum_{s\in S} \omega_n^{si}=0$. Consequently, we have demonstrated that $0$ is an eigenvalue of the graph $G(n;\cup_{i=0}^{\frac{n}{p}-1}(ip+S))$, and $\lambda_i=0$ whenever $\frac{n}{p}\nmid i$ for $1\leq i\leq n-1$. As a result, $|{1\leq i\leq n-1\ |\ \frac{n}{p}\nmid i}|=n-|{1\leq i\leq n\ |\ \frac{n}{p}\mid i}|=n-\frac{n}{n/p}=n-p$, which represents the multiplicity of the eigenvalue $0$.

If $\omega_{\frac{n}{p}}^{i}= 1$, which occurs when $\frac{n}{p}\mid i$, then we have $\sum_{j=0}^{\frac{n}{p}-1}\omega_{\frac{n}{p}}^{ji}=\frac{n}{p}$. 
Hence, we can establish that $\sum_{j=0}^{\frac{n}{p}-1}\omega_{\frac{n}{p}}^{ji}\sum_{s\in S} \omega_n^{si}=\frac{n}{p}\sum_{s\in S} \omega_p^{s{i_1}}$, where $i=\frac{n}{p}i_1$.
According to the discussion following Theorem \ref{thm:strongly-regular prime}, if $i_1$ is a quadratic residue modulo $p$, then $\sum_{s\in S} \omega_p^{s{i_1}}=\tau=\frac{\sqrt{p}-1}{2}$, while if $i_1$ is a non-quadratic residue modulo $p$, then $\sum_{s\in S} \omega_p^{s{i_1}}=\theta=\frac{-\sqrt{p}-1}{2}$. Therefore, if $\frac{ip}{n}$ is a quadratic residue then $\lambda_i=\frac{n}{p}\tau=\frac{n(\sqrt{p}-1)}{2p}$, while if $\frac{ip}{n}$ is a non-quadratic residue then $\lambda_i=\frac{n}{p}\theta=-\frac{n(\sqrt{p}+1)}{2p}$.
Since we have previously established that $m_{\tau}=m_{\theta}=\frac{p-1}{2}$, we can conclude that $\lambda_i$  also has a multiplicity of $\frac{p-1}{2}$, where $i_1=\frac{ip}{n}$ and $i_1$ is a (non-)residue modulo $p$.

    \qed
\end{proof}

Let us enumerate the vertices of the graph $G(n;\cup_{i=0}^{\frac{n}{p}-1}(ip+S))$ as $0, 1, \ldots, n-1$, and partition them into the classes $C_0, C_1, \ldots, C_{\frac{n}{p}-1}$ such that ${ip, ip+1, \ldots, ip+(p-1)} \in C_i$.
Let $u$ and $v$ be two vertices such that $u \in C_i$ and $v \in C_j$, for some $0 \leq i, j \leq \frac{n}{p}-1$. This implies that $u = ip + r_1$ and $v = jp + r_2$, for some $0 \leq r_1, r_2 \leq p-1$, and without loss of generality, we assume that $i \geq j$. Consequently, it holds that $u-v \in (i-j)p + S$ if and only if $r_1 - r_2 \in S$. In particular, if we set $u, v \in C_i$ and establish the mapping $f(u) = r_1$ and $f(v) = r_2$ ($f$ maps the elements of the class $C_i$ to their residues), it can be seen that a subgraph induced by the class $C_i$ is isomorphic to $G(p; S)$ with respect to the mapping $f$. Moreover, we can conclude that $G(n;\cup_{i=0}^{\frac{n}{p}-1}(ip+S))$ consists of $\frac{n}{p}$ copies of $G(p; S)$, and two vertices from distinct copies congruent modulo $p$ have the same neighborhood. 
Furthermore, we can represent each vertex $u$ in $C_i$ using a tuple where the first position corresponds to the class it belongs to (in this case, $i$), and the second position represents its position within the class $C_i$ determined by its residue $r$ modulo $p$. Hence, we can state that two vertices $(i,r_1)$ and $(j,r_2)$ are adjacent if and only if $r_1 - r_2 \in S$. In other words, in the graph $G(n;\cup_{i=0}^{\frac{n}{p}-1}(ip+S))$, the vertices $(i,r_1)$ and $(j,r_2)$ are adjacent if and only if $r_1$ and $r_2$ are adjacent in $G(p; S)$ for all $0 \leq i, j \leq \frac{n}{p}-1$.
Considering the set $\{i\ |\ 0\leq i\leq \frac{n}{p}-1\}$ as the vertices of the graph $K^{*}_{\frac{n}{p}}$, where every two vertices are adjacent, we can conclude that $(i,r_1)$ and $(j,r_2)$ are adjacent in $G(n;\cup_{i=0}^{\frac{n}{p}-1}(ip+S))$ if and only if $i$ and $j$ are adjacent in $K^{*}_{\frac{n}{p}}$, and $r_1$ and $r_2$ are adjacent in $G(p; S)$. This can be expressed as the isomorphism $G(n;\cup_{i=0}^{\frac{n}{p}-1}(ip+S))\simeq K^{*}_{\frac{n}{p}}\otimes G(p; S)$.

\medskip

In this context, we aim to discover a new classes of integral circulant graphs with composite order that possess four distinct eigenvalues in their spectra.
\begin{theorem}
\label{thm: second class}
   Integral circulant graph $\ICG_n(D)$ posses four distinct eigenvalues in its spectrum, whenever $n$ is a composite integer
and $D = \{d \in D_n\ |\ k \nmid d\}\cup km D_{\frac{n}{km}}$, 
for some divisors $n - 1 \geq k \geq 2$ of $n$ and $\frac{n}{k} - 1 \geq m \geq 2$ of $\frac{n}{k}$ .
\end{theorem}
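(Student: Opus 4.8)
The plan is to realise $D$ as a disjoint union of two divisor sets whose integral circulant graphs are already understood and to exploit the additivity of formula (\ref{ldef}) in the divisor set. Set $D'=\{d\in D_n\ |\ k\nmid d\}$ and $D''=km\,D_{n/(km)}$, so that $D=D'\cup D''$. Note that $km\mid n$ (since $k\mid n$ and $m\mid\frac nk$), that every element of $D''$ is divisible by $k$ so $D'\cap D''=\emptyset$, and that $1\in D'$ and $km\in D''$, so both sets are nonempty. By (\ref{ldef}), for every $0\le j\le n-1$,
\begin{equation*}
\lambda_j=\sum_{d\in D'}c(j,n/d)+\sum_{d\in D''}c(j,n/d)=\lambda_j'+\lambda_j'',
\end{equation*}
where $\lambda_j'$ and $\lambda_j''$ denote the $j$-th eigenvalues of $\ICG_n(D')$ and $\ICG_n(D'')$; equivalently, $\ICG_n(D)$ is the edge-disjoint union of these two circulant graphs and their adjacency matrices are simultaneously diagonalised by the vectors $v_j$ of (\ref{eq:eigenvalues unwigted}).

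Next I would identify the two summands. Because $k\mid n$, one has $k\mid\gcd(s,n)$ iff $k\mid s$, so $\ICG_n(D')$ is the complete multipartite graph $K_k\otimes K^*_{n/k}$ of (\ref{kronecker_complete}); writing its $j$-th eigenvalue as $\lambda_j'=\sum_{0\le s<n}\omega_n^{js}-\sum_{0\le s<n,\ k\mid s}\omega_n^{js}$ and evaluating the two geometric sums yields $\lambda_0'=(k-1)\frac nk$, $\lambda_j'=-\frac nk$ when $j\neq0$ and $\frac nk\mid j$, and $\lambda_j'=0$ otherwise. For $\ICG_n(D'')$, the greatest common divisor of $D''$ equals $km$, so by Lemma \ref{lem:ICG unconnected} it is $km$ vertex-disjoint copies of $\ICG_{n/(km)}(D_{n/(km)})=K_{n/(km)}$; its symbol is $\{km,2km,\dots,(\frac n{km}-1)km\}$, so $\lambda_j''=\sum_{t=1}^{n/(km)-1}\omega_{n/(km)}^{jt}$, which equals $\frac n{km}-1$ when $\frac n{km}\mid j$ and $-1$ otherwise.

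Finally I would combine and count the multiplicities. Since $k\mid km\mid n$ we have $\frac n{km}\mid\frac nk$, hence $\frac nk\mid j$ forces $\frac n{km}\mid j$; classifying the indices $j$ according to whether $\frac nk\mid j$ and whether $\frac n{km}\mid j$ therefore leaves exactly four classes, producing the values
\begin{equation*}
\mu_1=(k-1)\tfrac nk+\tfrac n{km}-1,\qquad \mu_2=-\tfrac nk+\tfrac n{km}-1,\qquad \mu_3=\tfrac n{km}-1,\qquad \mu_4=-1,
\end{equation*}
attained at $j=0$; at $j\neq0$ with $\frac nk\mid j$; at $j$ with $\frac n{km}\mid j$ but $\frac nk\nmid j$; and at $j$ with $\frac n{km}\nmid j$, respectively. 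Using $k\ge2$, $m\ge2$ and $km\le n-k<n$ (the last forcing $\frac n{km}\ge2$) one checks $\mu_1-\mu_3=(k-1)\frac nk>0$, $\mu_3-\mu_4=\frac n{km}\ge2$ and $\mu_4-\mu_2=(m-1)\frac n{km}>0$, so that $\mu_2<\mu_4<\mu_3<\mu_1$ are pairwise distinct; their multiplicities come out to $1$, $k-1$, $k(m-1)$ and $n-km$, each strictly positive and summing to $n$. Hence $\ICG_n(D)$ has precisely four distinct eigenvalues. I expect the only genuine work here to be this last bookkeeping step — verifying that the nested divisibility conditions yield exactly four index classes and that the values $\mu_2$ and $\mu_3$ really occur (which is where the hypotheses $k,m\ge2$ and $km<n$ enter); everything else reduces to routine roots-of-unity computations.
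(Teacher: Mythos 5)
Your proof is correct and follows essentially the same route as the paper: split $D$ into $\{d\in D_n\ |\ k\nmid d\}$ and $kmD_{n/(km)}$, add the corresponding spectra via (\ref{ldef}), identify the second piece as $km$ disjoint copies of a complete graph through Lemma \ref{lem:ICG unconnected}, and sort indices $j$ by divisibility by $\frac nk$ and $\frac n{km}$ into four classes. The only cosmetic difference is that you evaluate the spectrum of $\ICG_n(\{d\in D_n\ |\ k\nmid d\})$ directly by geometric sums, whereas the paper gets it by complementing $\ICG_n(kD_{n/k})$; your explicit verification that the four values are pairwise distinct and all four classes are nonempty is a welcome addition the paper leaves implicit.
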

\begin{proof}
First, we compute the eigenvalues of $\mu_0,\ldots,\mu_{rs-1}$, which is given by the equation (\ref{ldef}), of the graph $\ICG_{rs}(rD_s)$, for arbitrary positive integers $r\geq 2$ and $s\geq 1$. Given that $\gcd({d\ |\ d\in rD_s})=r\geq 2$, according to Lemma \ref{lem:ICG unconnected}, it follows that $\ICG_{rs}(rD_s)$ is disconnected and consists of $r$ connected components, each of which is isomorphic to the complete graph $\ICG_{s}(D_s)$. Furthermore, considering that the spectrum of $\ICG_{s}(D_s)$  is represented as 
$s-1,\underbrace{-1,\ldots,-1}_{s-1}$
and the spectrum of $\ICG_{rs}(rD_s)$ is composed of $r$ copies of the spectrum of $\ICG_{s}(D_s)$, we deduce that $\mu_j=s-1$, for $s\mid j$, and $\mu_j=-1$, for $s\nmid j$. 

Let $\lambda_0,\ldots, \lambda_{n-1}$ be the spectrum of $\ICG_n(D)$, $\nu_0,\ldots, \nu_{n-1}$ be the spectrum of $\ICG_n(\{d \in D_n\ |\ k \nmid d\})$ and $\eta_0,\ldots, \eta_{n-1}$ be the spectrum of $\ICG_n(km D_{\frac{n}{km}})$, obtained by (\ref{ldef}). It is clear that $\lambda_i=\nu_i+\eta_i$, for $0\leq i\leq n-1$. Since $\ICG_n(\{d \in D_n\ |\ k \nmid d\})$ represents a regular graph, we see that its complement $\ICG_n(\{d \in D_n\ |\ k \mid d\})\simeq \ICG_n(kD_{\frac{n}{k}})$ has spectrum $n-1-\nu_0,-1-\nu_1,\ldots, -1-\nu_{n-1}$.
According to the preceding discussion, we get that $n-1-\nu_0=\frac{n}{k}-1$, $-1-\nu_i=\frac{n}{k}-1$, for $\frac{n}{k}\mid i$, and  $-1-\nu_i=-1$, for $\frac{n}{k}\nmid i$, $1\leq i\leq n-1$. In conclusion, this implies that $\nu_0=n-\frac{n}{k}$, $\nu_i=-\frac{n}{k}$, for $\frac{n}{k}\mid i$, and  $\nu_i=0$, for $\frac{n}{k}\nmid i$, $1\leq i\leq n-1$. 
Similarly, we can compute the spectrum of  $\ICG_n(km D_{\frac{n}{km}})$ as follows:  
$\eta_i=\frac{n}{km}-1$, for $\frac{n}{km}\mid i$, and $\eta_i=-1$, for $\frac{n}{km}\nmid i$, $0\leq i\leq n-1$.
Finally, as $\lambda_i=\nu_i+\eta_i$, for $0\leq i\leq n-1$, it can be seen that $\lambda_0=n-\frac{n}{k}+\frac{n}{km}-1$, $\lambda_i=-\frac{n}{k}+\frac{n}{km}-1$, for $\frac{n}{k}\mid i$,
$\lambda_i=\frac{n}{km}-1$, for $\frac{n}{km}\mid i,\ \frac{n}{k}\nmid i $, and $\lambda_i=-1$, for $\frac{n}{km}\nmid i$, for $1\leq i\leq n-1$. The multiplicities of the particular eigenvalues are given by the following formula
$$
\{n-\frac{n}{k}+\frac{n}{km}-1^{(1)},\frac{n}{km}-1^{(km-k)},-\frac{n}{k}+\frac{n}{km}-1^{(k-1)},-1^{(n-mk)}\}.
$$

 \qed   
\end{proof}

It is worth noting that the class of graphs $\ICG_{p^{\alpha}}(1,p^{\alpha-1})$, as described in \cite{ChRa18} by Theorem 4.1, represents only a specific case within the broader class of graphs defined in Theorem \ref{thm: second class}. In the case of $n=p^\alpha$, $k=p$, and $m=p^{\alpha-2}$, we have $\{d \in D_n\ |\ k \nmid d\}=\{1\}$ and $km D_{\frac{n}{km}}=p^{\alpha-1}D_p=\{p^{\alpha-1}\}$. Consequently,  $\ICG_n(\{d \in D_n\ |\ k \nmid d\}\cup km D_{\frac{n}{km}})=\ICG_{p^{\alpha}}(1,p^{\alpha-1})$. Similarly, for $n=p^{\alpha}$, $m=p$ and $k=p^s$  (where $1<s\leq \alpha-2$), we can deduce that $\{d \in D_n\ |\ k \nmid d\}=\{1,p,\ldots,p^{s-1}\}$. We also have 
$km D_{\frac{n}{km}}=p^{s+1}D_{p^{\alpha-s-1}}=\{p^{s+1},p^{s+2},\ldots,p^{\alpha-1}\}$ and hence $\ICG_{p^{\alpha}}(D_{p^{\alpha}}\setminus \{p^s\})$ is an example of a graph which exhibits four distinct eigenvalues in its spectrum, as derived in  \cite{ChRa18} through Theorem 4.3. Moreover, the statement of Theorem 4.5 from \cite{ChRa18} directly follows from Theorem \ref{thm: second class} for $n=p^\alpha$, $k=p^{\alpha-2}$ and $m=p$.
Furthermore, the statement of Theorem 5.1 from \cite{ChRa18} directly follows from Theorem \ref{thm: second class} for even composite $n$, $k=d$ and $m=\frac{n}{2d}$, where $d$ is a divisor of $\frac{n}{2}$. Thus, we have demonstrated that all the classes of circulant graphs discussed in \cite{ChRa18} are merely specific instances of the class of graphs obtained in the theorem mentioned above.

It is important to highlight that the graph mentioned in Theorem \ref{thm: second class} can be expressed as the union of graphs: the strongly regular graph $\ICG_n({d \in D_n\ |\ k \nmid d})$ and $km$ copies of the complete graph $K_{km}$. Furthermore, referring to equation (\ref{kronecker_complete}) and the subsequent discussion, we observe that the graph $\ICG_n({d \in D_n\ |\ k \nmid d})$ can be represented as the tensor product of specific graphs.

\bigskip

In the following theorem we present another class of integral cicrculant graphs with composite order whose spectrum contains four distinct eigenvalues. It is noteworthy that this particular class of graphs can be viewed as a union of two distinct graphs, both of which are subgraphs of strongly regular integral circulant graphs of specific orders.
\begin{theorem}
    Let $n$ be an composite even number and $k$ be an odd divisor of $n$, such that $n-1\geq k\geq 2$. Then, the spectrum of the integral circulant graph $\ICG_n(D)$, for $D = \{d \in D_n\ |\ d\in 2\N,\ k \nmid d\}\cup \{d \in D_n\ |\ d\in 2\N+1,\ k \mid d\}$, posses four distinct eigenvalues. 
\end{theorem}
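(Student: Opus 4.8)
The plan is to follow the strategy of the proof of Theorem~\ref{thm: second class}: express the symbol set $D$ as an integer combination of ``principal'' divisor sets $\{d\in D_n\ :\ r\mid d\}=rD_{n/r}$, for which $\ICG_n(rD_{n/r})$ is, by Lemma~\ref{lem:ICG unconnected}, a disjoint union of $r$ copies of the complete graph $K_{n/r}$ and hence has the completely explicit spectrum $\mu^{(r)}_0,\dots,\mu^{(r)}_{n-1}$ (computed from~(\ref{ldef})) given by $\mu^{(r)}_j=\tfrac nr-1$ when $\tfrac nr\mid j$ and $\mu^{(r)}_j=-1$ otherwise, with the convention that an empty symbol ($n/r=1$) yields the all-zero spectrum. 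First I would record the arithmetic consequences of the hypotheses: since $n$ is even and $k$ is an odd divisor of $n$ with $2\le k\le n-1$, we have $k\le n/2$, $n\ge 2k\ge 6$, $2k\mid n$, and, crucially, $2k\mid d \iff 2\mid d$ and $k\mid d$, for every $d\mid n$. Using this last equivalence and a routine four-case check on the parity of $d$ and on whether $k\mid d$, one verifies the pointwise identity
\[
\mathbf 1[d\in D]=\mathbf 1[2\mid d]+\mathbf 1[k\mid d]-2\,\mathbf 1[2k\mid d],\qquad d\in D_n,
\]
which simply records that $D=(\{d:2\mid d\}\cup\{d:k\mid d\})\setminus\{d:2k\mid d\}$. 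Multiplying by $c(j,n/d)$ and summing over $d\in D_n$, and using $\{d\in D_n:r\mid d\}=rD_{n/r}$ together with $\lambda_j=\sum_{d\in D}c(j,n/d)$ from~(\ref{ldef}), this gives
\[
\lambda_j=\mu^{(2)}_j+\mu^{(k)}_j-2\,\mu^{(2k)}_j,\qquad 0\le j\le n-1.
\]

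Next I would evaluate the right-hand side by a short case analysis. Put $a=n/2$, $b=n/k$, $c=n/(2k)$, so that $a=kc$, $b=2c$, $\mathrm{lcm}(a,b)=n/\gcd(2,k)=n$, and $a\mid j$ (respectively $b\mid j$) forces $c\mid j$. The feasible divisibility patterns of $j$ by $a$, $b$, $c$ are: (i) $a\mid j$ and $b\mid j$, i.e.\ $n\mid j$, i.e.\ $j=0$, where $\lambda_0=(a-1)+(b-1)-2(c-1)=a+b-2c=\tfrac n2$; (ii) $a\mid j$, $b\nmid j$, which (because $k$ is odd, so $b\nmid\tfrac n2$) happens exactly for $j=\tfrac n2$, where $\lambda_j=(a-1)-1-2(c-1)=a-2c=\tfrac n2-\tfrac nk$; (iii) $a\nmid j$, $b\mid j$, where $\lambda_j=-1+(b-1)-2(c-1)=b-2c=0$, for the $k-1$ indices $j\in\{tn/k:1\le t\le k-1\}$; (iv) $a\nmid j$, $b\nmid j$, $c\mid j$, where $\lambda_j=-2-2(c-1)=-2c=-\tfrac nk$, for the $k-1$ indices $j=tc$ with $t$ odd and $k\nmid t$; and (v) $c\nmid j$, where $\lambda_j=-2-2(-1)=0$, for the remaining $n-2k$ indices. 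The index counts $1+1+(k-1)+(k-1)+(n-2k)=n$ tally, confirming that the five patterns are disjoint and exhaustive.

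It then remains to confirm that $\ICG_n(D)$ has exactly four distinct eigenvalues, namely $\tfrac n2$, $\tfrac n2-\tfrac nk$, $0$, $-\tfrac nk$, with multiplicities $1$, $1$, $n-k-1$, $k-1$ respectively. Since $k$ is odd with $3\le k\le n/2$ and $n\ge 6$, we have $2\le n/k\le n/3<n/2$, so $-\tfrac nk<0<\tfrac n2-\tfrac nk<\tfrac n2$ are pairwise distinct; moreover $k-1\ge 2$ and $n-k-1\ge n/2-1\ge 2$, so all four multiplicities are positive and sum to $n$. I expect the only genuinely delicate point is the bookkeeping of step two, in particular verifying that the zero eigenvalue produced jointly by subcases (iii) and (v) accounts for exactly $n-k-1$ indices and that the count of odd multiples $tc$ of $c$ with $k\nmid t$ in (iv) is exactly $k-1$; the rest is the same mechanism already used in the proof of Theorem~\ref{thm: second class}.
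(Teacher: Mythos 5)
Your proposal is correct, and the computation it yields (eigenvalues $\tfrac n2$, $\tfrac n2-\tfrac nk$, $0$, $-\tfrac nk$ with multiplicities $1$, $1$, $n-k-1$, $k-1$) agrees with the spectrum obtained in the paper. The route, however, is not quite the paper's. The paper splits $D$ into its two defining \emph{disjoint} pieces, applies Lemma~\ref{lem:ICG unconnected} to each — the even-divisor piece is two copies of $\ICG_{n/2}(\{d\in D_{n/2}\,:\,k\nmid d\})$, whose three-eigenvalue (strongly regular, complete multipartite) spectrum is imported from the proof of Theorem~\ref{thm: second class}, and the odd-divisor piece is $k$ copies of the complete bipartite graph $\ICG_{n/k}(\{d\in D_{n/k}\,:\,2\nmid d\})$ — and then adds the two spectra indexwise via (\ref{ldef}). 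You instead decompose the \emph{indicator} of $D$ as the signed combination $\mathbf 1[2\mid d]+\mathbf 1[k\mid d]-2\,\mathbf 1[2k\mid d]$ (valid because $\gcd(2,k)=1$), so that every building block is merely a disjoint union of complete graphs $\ICG_n(rD_{n/r})$, and then run a divisibility case analysis on $j$ modulo $n/2$, $n/k$, $n/(2k)$. Both arguments rest on the same mechanism (linearity of the Ramanujan-sum spectrum in the symbol set plus Lemma~\ref{lem:ICG unconnected}), but yours is more self-contained — it does not need the strongly regular spectrum from Theorem~\ref{thm: second class} nor the complement argument hidden inside it — and it is more complete at the end: you verify pairwise distinctness of the four values, positivity of all multiplicities (including the degenerate case $2k=n$, handled by your empty-symbol convention), and the tally of indices, points the paper leaves implicit. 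The cost is the inclusion–exclusion bookkeeping, which you carry out correctly.
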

\begin{proof}
Let $\lambda_0,\ldots, \lambda_{n-1}$ be the spectrum of $\ICG_n(D)$, $\nu_0,\ldots, \nu_{n-1}$ be the spectrum of $\ICG_n(\{d \in D_n\ |\ d\in 2\N,\ k \nmid d\})$ and $\eta_0,\ldots, \eta_{n-1}$ be the spectrum of $\ICG_n(\{d \in D_n\ |\ d\in 2\N+1,\ k \mid d\})$, obtained by (\ref{ldef}). It is clear that $\lambda_i=\nu_i+\eta_i$, for $0\leq i\leq n-1$.
Based on Lemma \ref{lem:ICG unconnected}, it can be observed that the graph $\ICG_n(\{d \in D_n\ |\ d\in 2\mathbb{N},\ k \nmid d\})$ is isomorphic to two copies of the graph $\ICG_{n/2}(\{d \in D_{\frac{n}{2}}\ |\ k \nmid d\})$. Additionally, as deduced from the proof of Theorem \ref{thm: second class}, the spectrum of this graph consists of three distinct eigenvalues: $\{\frac{n}{2}-\frac{n}{2k}, -\frac{n}{2k}, 0\}$. Consequently, we have $\nu_0=\nu_{\frac{n}{2}}=\frac{n}{2}-\frac{n}{2k}$, $\nu_i=-\frac{n}{2k}$ for $\frac{n}{2k}\mid i$ and $\frac{n}{2}\nmid i$, and $\nu_i=0$ otherwise.    

In the same fashion, we can conclude that $\ICG_n(\{d \in D_n\ |\ d\in 2\N+1,\ k \mid d\})$ is isomorphic to $k$ copies of
$\ICG_{\frac{n}{k}}(\{d \in D_{\frac{n}{k}}\ |\ 2\nmid d\})$. The spectrum of 
$\ICG_{\frac{n}{k}}(\{d \in D_{\frac{n}{k}}\ |\ 2\nmid d\})$ contains three distinct eigenvalues $\{\frac{n}{k}-\frac{n}{2k}, -\frac{n}{2k}, 0\}$. Moreover, it holds that $\eta_i=\frac{n}{2k}$, for $\frac{n}{k}\mid i$, $\eta_i=-\frac{n}{2k}$ for
$\frac{n}{2k}\mid i$ and $\frac{n}{k}\nmid i$, and $\eta_i=0$, otherwise. Therefore, we see that $\lambda_0=\frac{n}{2}-\frac{n}{2k}+\frac{n}{2k}=\frac{n}{2}$, $\lambda_{\frac{n}{2}}=\frac{n}{2}-\frac{n}{2k}-\frac{n}{2k}=\frac{n}{2}-\frac{n}{k}$,
$\lambda_i=-\frac{n}{k}$, for $\frac{n}{2k}\mid i$, $\frac{n}{2}\nmid i$ and $\frac{n}{k}\nmid i$, and $\lambda_i=0$ otherwise.

    \qed
\end{proof}

Observing the complement of the graph $\ICG_n(D)$, for $D = \{d \in D_n\ |\ d\in 2\N,\ k \nmid d\}\cup \{d \in D_n\ |\ d\in 2\N+1,\ k \mid d\}$, we can found that its spectrum consists of four distinct eigenvalues:
$\mu_0=n-1-\frac{n}{2}=\frac{n}{2}-1$, $\mu_{\frac{n}{2}}=-1-\frac{n}{2}+\frac{n}{k}$,
$\mu_i=\frac{n}{k}-1$, for $\frac{n}{2k}\mid i$, $\frac{n}{2}\nmid i$ and $\frac{n}{k}\nmid i$, and $\mu_i=-1$ otherwise.

\medskip

In this section, thus far, we have identified classes of graphs, both circulant and non-circulant, whose spectra exhibit four distinct eigenvalues having composite order. As a result, we are now prepared to provide a characterization of all circulant graphs with prime order that possess four distinct eigenvalues in the subsequent statement.

\begin{theorem}
\label{thm:four eigenvalues prime}
For a prime number $p$ the spectrum of a circulant graph $G(p;S)$ posses four distinct eigenvalues in its spectrum if and only if $S$ is a set of all cubic residues modulo $p$
 or all cubic non-residues modulo $p$ and $p\in 3\N+1$.
    
\end{theorem}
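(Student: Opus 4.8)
The plan is to follow the skeleton of the proof of Theorem~\ref{thm:strongly-regular prime}, replacing the index‑two stabilizer attached to quadratic residues by an index‑three stabilizer attached to cubic residues, and to organize everything through the multiplicative action of $\Z_p^*$ on subsets of $\{1,\dots,p-1\}$. First I would note that, $p$ being prime and $0\notin S$, the graph $G(p;S)$ is connected, so $\lambda_0=|S|$ is a simple eigenvalue and therefore differs from every $\lambda_j$ with $1\le j\le p-1$. Hence $G(p;S)$ has four distinct eigenvalues exactly when $\lambda_1,\dots,\lambda_{p-1}$ realize precisely three distinct values, which by Lemma~\ref{lem:eigenvalues equality} happens exactly when the family $\{S_i\mid 1\le i\le p-1\}$, with $S_i=\{r_{i,s}\mid s\in S\}$, has precisely three members. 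The key point is that $S_i=i\cdot S$, the dilation of $S$ by $i$ modulo $p$, so this family is the orbit of $S$ under the action of the cyclic group $\Z_p^*$ on subsets of $\{1,\dots,p-1\}$ by multiplication. Writing $H$ for the stabilizer of $S$, the orbit has size $[\Z_p^*:H]$, so ``four distinct eigenvalues'' is equivalent to $[\Z_p^*:H]=3$; in particular $3\mid p-1$.

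Next I would pin down $H$ and the admissible sets $S$. Since $\Z_p^*$ is cyclic, its unique subgroup of index three is the group of nonzero cubic residues $\{x^3\bmod p\mid x\in\Z_p^*\}$, which by Theorem~\ref{thm:residues} equals $\{a\mid a^{(p-1)/3}\equiv_p1\}$. As $H$ fixes $S$ under dilation, $S$ is a union of cosets of $H$, of which there are exactly three; discarding $S=\emptyset$ and $S=\{1,\dots,p-1\}$ (the latter giving $K_p$, with two eigenvalues), $S$ must be a union of one or two cosets of $H$. Moreover the undirectedness condition $S=p-S$ comes for free here: $3\mid p-1$ together with $p$ odd forces $p\equiv1\pmod6$, so $(p-1)/3$ is even, whence $(-1)^{(p-1)/3}=1$ and $-1\in H$; therefore $-S=(-1)\cdot S=S$ for any $H$‑invariant $S$.

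It then remains to package the cases and prove the converse. If $S$ is a single coset of $H$, it is either $H$ itself (all cubic residues) or a nontrivial coset $gH$; applying the circulant multiplier isomorphism $G(p;S)\cong G(p;a\cdot S)$ with $a=g^{-1}$ (the same type of map used in the discussion after Theorem~\ref{thm:strongly-regular prime}) reduces the second situation to the first, so up to isomorphism $S$ is the set of cubic residues. If $S$ is a union of two cosets, it is the complement in $\{1,\dots,p-1\}$ of a single coset, and the same multiplier argument reduces it to $S=\{1,\dots,p-1\}\setminus H$, i.e.\ the cubic non‑residues. Conversely, when $3\mid p-1$ and $S$ is taken to be the set of cubic residues (resp.\ non‑residues), the orbit $\{i\cdot S\mid i\in\Z_p^*\}$ is manifestly $\{H,gH,g^2H\}$ (resp.\ the three complements), hence has exactly three members, and so $G(p;S)$ has exactly four distinct eigenvalues; the condition $S=p-S$ holds by the preceding paragraph.

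The main obstacle—and the real difference from the quadratic case—is the bookkeeping over which concrete symbols qualify. In the quadratic setting the length‑two orbit consists of two single cosets that literally are the quadratic residues and non‑residues, whereas in the cubic setting a single nontrivial coset $gH$ also yields an orbit of length three without being ``all cubic residues'' or ``all cubic non‑residues'', so one must invoke the multiplier isomorphism to collapse the six admissible symbols into two graphs. If one instead insists on an argument paralleling Theorem~\ref{thm:strongly-regular prime} verbatim—deriving $\prod_{s\in S}r_{i,s}=\prod_{s\in S}r_{j,s}$ from $\lambda_i=\lambda_j$, hence $i^{|S|}\equiv_p j^{|S|}$, and reading off cubic‑residue information via Theorem~\ref{thm:residues}—then the delicate step becomes showing that the three $S_i$‑classes are pairwise disjoint, which is harder than in the two‑set situation because an element of $\{1,\dots,p-1\}$ could a priori lie in two of the three sets; the orbit–stabilizer viewpoint above is exactly what removes this difficulty.
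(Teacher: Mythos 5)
Your proposal is correct, and it takes a genuinely different route from the paper's. The paper transplants the quadratic argument of Theorem~\ref{thm:strongly-regular prime}: from Lemma~\ref{lem:eigenvalues equality} it obtains the three sets $S,T,R$, argues they have equal size, cover $\{1,\ldots,p-1\}$ and are pairwise disjoint, then compares products $\prod_{s\in S}r_{i,s}$, uses $i^{(p-1)/3}\equiv_p j^{(p-1)/3}$ together with Theorem~\ref{thm:residues} and multiplicativity of the cubic symbol to pin $S$ down as one of the cosets $H$, $xH$, $x^2H$ of the cubic residues, and finally (as you also must) appeals to a multiplier isomorphism to collapse these to the cubic-residue graph. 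You instead note $S_i=i\cdot S$, so the family $\{S_i\}$ is the orbit of $S$ under $\Z_p^*$, and ``exactly three distinct $S_i$'' becomes ``the stabilizer has index $3$''; since $\Z_p^*$ is cyclic, that stabilizer is the unique index-$3$ subgroup, namely the cubic residues $H$, so $S$ is a nonempty proper union of $H$-cosets, $3\mid p-1$ falls out at once, $-1\in H$ gives $S=p-S$ for free, and the converse is immediate from the orbit having exactly three members. What your route buys, besides brevity: it bypasses the product/Euler-criterion computation and, crucially, the pairwise-disjointness case analysis, and it correctly covers the two-coset symbols (complements of a single coset, i.e.\ the cubic non-residues and their dilates), for which $S$, $xS$, $x^2S$ are \emph{not} pairwise disjoint; the paper's disjointness step in effect excludes precisely this case even though ``all cubic non-residues'' appears in the statement (for instance $p=7$, $S=\{2,3,4,5\}$ yields four distinct eigenvalues while $S\cap 2S=\{3,4\}\neq\emptyset$). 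So your orbit--stabilizer treatment is not only a valid alternative but also more faithful to the full statement, at the cost of being less ``hands-on'' number-theoretically; like the paper, you still rely on the multiplier isomorphism to reduce the six admissible symbol sets to the two named in the theorem, which is the honest reading of the theorem ``up to isomorphism.''
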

\begin{proof}
Let $G(p;S)$ exhibits four distinct eigenvalues in its spectrum. Since $\lambda_0=|S|$, it follows that the sequence of eigenvalues $\lambda_1,\lambda_2,...,\lambda_{p-1}$, given by (\ref{eq:eigenvalues unwigted}), must consist of exactly three distinct eigenvalues. We retain the same notation as in the proof of Theorem \ref{thm:strongly-regular prime}.
Since $S_1=S$, the number of distinct eigenvalues in the spectrum of $G(p;S)$ is equal to four if and
only if $\{S_i|\ 1\leq i\leq p-1\}=\{S,T,R\}$ for some $T,R\subseteq \{1,\ldots,p-1\}$.
Similarly as in the proof of Theorem \ref{thm:strongly-regular prime}, we conclude that $|S_i|=|S|=|T|=|R|$, for $1\leq i\leq p-1$.
Furthermore, for a given $s\in S$, we can conclude that $\{r_{i,s}|\ 1\leq i\leq p-1\}=\{1,\ldots,p-1\}$, which implies that $S\cup T\cup R=\{1,\ldots,p-1\}$.

We will now show  that the sets $S$, $T$ and $R$ are pairwise disjoint.
Suppose there exists an element $c$ that belongs to the intersection of $S$, $T$, and $R$. Following a similar approach as in the proof of Theorem \ref{thm:strongly-regular prime}, we can establish that $S=T=R=\{1,\ldots,p-1\}$. However, this leads to a contradiction. 
Now, without loss of generality, we can assume that there exists some $c\in S\cap T$ and $c\not\in R$.
This means that $c\in S_i$ for $\{1\leq i\leq p-1\ |\ S_i\in\{S,T\}\}$. 
Therefore, for each such $i$, there exists $s\in S$ such that $c=r_{i,s}$, and consequently, $s\equiv_p c\cdot i^{-1}$. 
This implies that $\{c\cdot i^{-1}\ |\ 1\leq i\leq p-1,\ S_i\in\{S,T\}\}\subseteq S$.
Let $1\leq j\leq p-1$ be an arbitrary index such that $S_j=R$. If $c\cdot j^{-1}\in S$, then $j\cdot c\cdot j^{-1}\in S_j$, and hence $c\in R$, leading to a contradiction. Thus, we conclude that $\{c \cdot i^{-1} | 1 \leq i \leq p-1, S_i \in \{S,T\}\} = S$. Let $i$ and $j$ be  indices such that $S_i=S$ and $S_j=T$. Since, $S_i\neq S_j$, then there exist elements $ca^{-1}$ and $cb^{-1}$ from the set $\{c\cdot i^{-1}\ |\ 1\leq i\leq p-1,\ S_i\in\{S,T\}\}$ such that $cia^{-1}\not\equiv_{p} cjb^{-1}$.
We can distinguish three cases: $S_a=S$ and $S_b=S$, $S_a=S$ and $S_b=T$, and $S_a=T$ and $S_b=T$. Suppose that $S_a=S$ and $S_b=S$. From  $cia^{-1}\not\equiv_{p} cjb^{-1}$, we have that $cai^{-1}\not\equiv_{p} cbj^{-1}$, which means that $S_a\neq S_b$, as  $cai^{-1}\in S_a$ and $cbj^{-1}\in S_b$. This leads us to a contradiction. In the similar fashion the same conclusion can be deduced for the case $S_a=T$ and $S_b=T$. Finally, suppose that $S_a=S$ and $S_b=T$.
From  $cia^{-1}\not\equiv_{p} cjb^{-1}$, we have that $cba^{-1}\not\equiv_{p} cji^{-1}$. This implies that $S_b\neq S_j$, as $cba^{-1}$ belongs to $S_b$ and $cji^{-1}$ belongs to $S_j$. However, this contradicts our assumption that $S_b=S_j=T$.

Since $|S|=|T|=|R|$, $S\cup T\cup R=\{1,\ldots,p-1\}$ and $S$, $T$ and $R$ are pairwise disjoint, we obtain that $|S|=|T|=|R|=\frac{p-1}{3}$. Based on Lemma \ref{eq:eigenvalues equality}, the equation $\lambda_i=\lambda_j$ implies $\prod_{s\in S}r_{i,s}=\prod_{s\in S}r_{j,s}$, which further leads to $\prod_{s\in S}is\equiv_p\prod_{s\in S}js$ and subsequently $i^{\frac{p-1}{3}}\equiv_pj^{\frac{p-1}{3}}$.
By examining the values of $i^{\frac{p-1}{3}}$ modulo $p$, which can take the forms $1$, $x$, or $x^2$, for some $x$, $x^2+x+1\equiv_p 1$, we can deduce that if $\lambda_i=\lambda_j$, then the values of $i$ and $j$ are both  cubic residues modulo $p$ (as stated in Theorem \ref{thm:residues}), or they satisfy the condition $i^{\frac{p-1}{3}}\equiv_pj^{\frac{p-1}{3}}\equiv_p x$ or $i^{\frac{p-1}{3}}\equiv_pj^{\frac{p-1}{3}}\equiv_p x^2$. If  one of the last two conditions is satisfied, then it is easy to see that $i\equiv_p xi_1$ or $i\equiv_p x^2i_1$, respectively, for some cubic residue $i_1$ modulo $p$. 
Since $S_1=S$ it follows $\{i|\ S_i=S,\
1\leq i\leq p-1\}\subseteq \{i|\
\big{[}\frac{i}{p}\big{]}_3=1,\
1\leq i\leq p-1\}$. Similarly, we conclude that $\{i|\ S_i=T,\
1\leq i\leq p-1\}\subseteq \{x\cdot i|\
\big{[}\frac{i}{p}\big{]}_3=1,\
1\leq i\leq p-1\}$ and $\{i|\ S_i=R,\
1\leq i\leq p-1\}\subseteq \{x^2\cdot i|\
\big{[}\frac{i}{p}\big{]}_3=1,\
1\leq i\leq p-1\}$. 
Given that $\{i|\
\big{[}\frac{i}{p}\big{]}_3=1,\
1\leq i\leq p-1\}=|\{x\cdot i|\
\big{[}\frac{i}{p}\big{]}_3=1,\
1\leq i\leq p-1\}|=|\{x^2\cdot i|\
\big{[}\frac{i}{p}\big{]}_3=1,\
1\leq i\leq p-1\}|=\frac{p-1}{3}$ and $\{i|\ S_i=S,\
1\leq i\leq p-1\}\cup\{i|\ S_i=T,\
1\leq i\leq p-1\}\cup\{i|\ S_i=R,\
1\leq i\leq p-1\}=\{1,\ldots, p-1\}$, we have that $\{i|\ S_i=S,\
1\leq i\leq p-1\}= \{i|\
\big{[}\frac{i}{p}\big{]}_3=1,\
1\leq i\leq p-1\}$, $\{i|\ S_i=T,\
1\leq i\leq p-1\}= \{x\cdot i|\
\big{[}\frac{i}{p}\big{]}_3=1,\
1\leq i\leq p-1\}$ and $\{ i|\ S_i=R,\
1\leq i\leq p-1\}= \{x^2\cdot i|\
\big{[}\frac{i}{p}\big{]}_3=1,\
1\leq i\leq p-1\}$.

Suppose there exists $s\in S$ such that $\big{[}\frac{s}{p}\big{]}_3=1$. For every $1\leq i\leq p-1$ such that $\big{[}\frac{i}{p}\big{]}_3=1$, we have that 
$S_i=S$ and $r_{i,s}\in S_i$, thereby implying that $r_{i,s}\in S$. 
In other words, we have that $\{r_{i,s}\ |\ \big{[}\frac{i}{p}\big{]}_3=1,\  1\leq i\leq p-1\}\subseteq S$. 
Since 
$\big{[}\frac{is}{p}\big{]}_3=\big{[}\frac{i}{p}\big{]}_3\big{[}\frac{s}{p}\big{]}_3=1$, we conclude that $\{r_{i,s}\ |\ \big{[}\frac{i}{p}\big{]}_3=1,\ 1\leq i\leq p-1 \}\subseteq \{i|\
\big{[}\frac{i}{p}\big{]}_3=1,\
1\leq i\leq p-1\}$. 
Moreover, from the fact that $i\neq j$ implies $r_{i,s}\neq r_{j,s}$, for $1\leq i,j\leq p-1$,
we further get that $\{r_{i,s}\ |\ \big{[}\frac{i}{p}\big{]}_3=1,\  1\leq i\leq p-1\}= \{i|\
\big{[}\frac{i}{p}\big{]}_3=1,\
1\leq i\leq p-1\}$. 
Finally, from the preceding discussion it can be concluded that 
$\{i|\ \big{[}\frac{i}{p}\big{]}_3=1,\ 1\leq i\leq p-1\}\subseteq S$ and since $|\{i|\
\big{[}\frac{i}{p}\big{]}_3=1,\
1\leq i\leq p-1\}|=|S|=\frac{p-1}{3}$, it holds that $\{i|\
\big{[}\frac{i}{p}\big{]}_3=1,\
1\leq i\leq p-1\}= S$. 
If we assume that there exists $s\in S$ such that $s\equiv_{p}x\cdot s_1$ or  $s\equiv_{p}x^2\cdot s_1$, where
$\big{[}\frac{s_1}{p}\big{]}_3=1$, it can be proven in a similar fashion $S= \{x \cdot i|\
\big{[}\frac{i}{p}\big{]}_3=1,\
1\leq i\leq p-1\}$ or $S= \{x^2 \cdot i|\
\big{[}\frac{i}{p}\big{]}_3=1,\
1\leq i\leq p-1\}$. For all the obtained cases concerning the values of $S$, it can be concluded that the graphs $G(p;S)$ are mutually isomorphic.

    \qed
\end{proof}

We can proceed with determining the spectrum of the graph $G(p;S)$, where $p$ is a prime number of the form $3k+1$ and $S$ represents the set of cubic residues modulo $p$. In the previous theorem, we have established that the eigenvalues $\lambda_1,\ldots,\lambda_{p-1}$ of $G(p;S)$ include three distinct values, denoted as $\theta$, $\tau$, and $\eta$. These values can be calculated by considering the following sums: $\theta=\sum_{s\in S} \omega_{p}^s$, $\tau=\sum_{s\in S} \omega_{p}^{xs}$, and $\eta=\sum_{s\in S} \omega_{p}^{x^2s}$, where $x$ represents a cubic non-residue modulo $p$.
Given that $p\in 3\N+1$, according to  well-known theorem of Fermat, there exist unique integers $a$ and $b$, up to sign, such that $4p=a^2+27b^2$ (for more details see \cite{Cox89}).
The sums $\theta$, $\tau$, and $\eta$  are closely related to so called Gauss cubic sums and represent  the roots of the polynomial $t^3+t^2-\frac{p-1}{3}t-\frac{ap+3p-1}{27}$ (this result can be found in \cite[p.~460--461]{Hasse50}).

\subsection {Construction of regular graphs with four distinct eigenvalues using line operator}

In this section, we find novel classes of graphs whose spectra posses four distinct eigenvalues by applying the line graph operator $L$ on the class of unitary Cayley graphs.
First, we aim to characterize all unitary Cayley graphs whose spectra exhibit four distinct eigenvalues.

\begin{theorem}
\label{thm:ucg-4 eigenvalues} Unitary Cayley graph $X_n$ has four
distinct eigenvalues if and only if $n$ is the product of two
distinct primes.
\end{theorem}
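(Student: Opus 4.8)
The plan is to compute the spectrum of $X_n=\ICG_n(\{1\})$ explicitly from the Ramanujan-function description and then count distinct eigenvalues by a short case analysis on the number $k$ of distinct prime divisors of $n$. First I would specialize (\ref{ldef}) to $D=\{1\}$, obtaining $\lambda_j=c(j,n)=\mu(t_{n,j})\frac{\varphi(n)}{\varphi(t_{n,j})}$; since $t_{n,j}=n/\gcd(n,j)$ ranges over all divisors of $n$ as $j$ ranges over $\{0,\dots,n-1\}$, the set of distinct eigenvalues of $X_n$ is exactly
\[
E_n=\Big\{\,\mu(t)\tfrac{\varphi(n)}{\varphi(t)}\ :\ t\mid n\,\Big\}.
\]
A non-squarefree divisor contributes the value $0$, and such a divisor exists if and only if $n$ is not squarefree; a squarefree divisor has the form $t=\prod_{i\in I}p_i$ for some $I\subseteq\{1,\dots,k\}$ and contributes $(-1)^{|I|}\varphi(n)/\prod_{i\in I}(p_i-1)$. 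So the whole question becomes counting how many of these numbers are distinct.

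For the ``if'' direction I would take $n=p_1p_2$ with $p_1<p_2$; then $n$ is squarefree, so $0\notin E_n$, and the subsets $\emptyset,\{1\},\{2\},\{1,2\}$ give $E_n=\{(p_1-1)(p_2-1),\,-(p_2-1),\,-(p_1-1),\,1\}$. The two positive entries coincide only if $(p_1-1)(p_2-1)=1$, i.e. $p_1=p_2=2$, which is excluded, and the two negative entries coincide only if $p_1=p_2$, again excluded; hence $|E_n|=4$.

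For the ``only if'' direction I would rule out every other $n$. If $k=1$, then $n=p^\alpha$ and $E_n$ has two elements for $\alpha=1$ and three for $\alpha\ge2$ (this is essentially Theorem \ref{thm:sruc}), so never four. If $k=2$ but $n$ is not squarefree, then $E_n$ contains the five pairwise distinct numbers $\varphi(n)>0$, $-\varphi(n)/(p_1-1)<-\varphi(n)/(p_2-1)<0$, the positive value $\varphi(n)/((p_1-1)(p_2-1))$ (distinct from $\varphi(n)$ because $(p_1-1)(p_2-1)>1$), and $0$, so $|E_n|\ge5$. If $k\ge3$, then already the subsets $\emptyset,\{1\},\{2\},\{3\},\{1,2\}$ yield the five values $\varphi(n)$, $-\varphi(n)/(p_1-1)$, $-\varphi(n)/(p_2-1)$, $-\varphi(n)/(p_3-1)$, $\varphi(n)/((p_1-1)(p_2-1))$, which are pairwise distinct (two positives, different since $(p_1-1)(p_2-1)>1$; three negatives, different since $p_1<p_2<p_3$), so $|E_n|\ge5$. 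Thus the only surviving case is $k=2$ with $n$ squarefree, i.e. $n=p_1p_2$, which finishes the proof.

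The argument is essentially bookkeeping; the only place that needs a little care---and the closest thing to an obstacle---is checking in each case that the candidate eigenvalues listed are genuinely pairwise distinct, which each time reduces to a sign comparison together with the elementary fact that $(p_i-1)(p_j-1)\ge2>1$ for any two distinct primes.
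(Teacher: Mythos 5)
Your proposal is correct and follows essentially the same route as the paper: compute $\lambda_j=c(j,n)$ via the Ramanujan function, observe that the distinct eigenvalues are indexed by the divisors $t=t_{n,j}$ of $n$, and run a case analysis on the number of prime factors and squarefreeness, exhibiting at least five distinct values in every case other than $n=p_1p_2$. Your parametrization by divisors $t\mid n$ is a slightly cleaner bookkeeping of the same computation the paper performs with chosen indices $j$.
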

\begin{proof}
Suppose that $X_n$ has exactly four distinct eigenvalues and that
there exists prime $p_i$ in the prime factorization of
$n=p_1^{\alpha_1}\cdots p_k^{\alpha_k}$, such that $\alpha_i\geq 2$.
If $k=1$, according to Theorem \ref{thm:sruc}, $X_n$ is strongly
regular which is a contradiction, so we assume that $k\geq 2$.

Let $S=\{p_1^{\beta_1}\cdots p_k^{\beta_k}<n\ |\
\beta_i\in\{\alpha_i-1,\alpha_i\}\}$ and $m=p_1p_2\cdots p_k$. Since
$k\geq 2$, we obtain $|S|\geq 3$. For $j\in S$, where
$j=p_1^{\beta_1}\cdots p_k^{\beta_k}$ we conclude that
$t_{n,j}=\frac{n}{\gcd(n,j)}= p_1^{\alpha_1-\beta_1}\cdots
p_k^{\alpha_k-\beta_k}$, $\varphi(t_{n,j})=\prod_{\beta_i=\alpha_i-1} (p_i-1)$ and
\begin{eqnarray*}
\lambda_j=c(j,n)&=&\mu(t_{n,j})\prod_{\beta_i=\alpha_i-1}p_i^{\alpha_i-1}\prod_{\beta_i=\alpha_i}p_i^{\alpha_i-1}(p_i-1)=\mu(t_{n,j})\frac{n}{m}\prod_{\beta_i=\alpha_i}(p_i-1)\neq
0.
\end{eqnarray*}
By the definition of $j\in S$, there exists a $k-$tuple
$(\beta_1,\ldots,\beta_k)$ such that $j=p_1^{\beta_1}\cdots
p_k^{\beta_k}$, so it can be written in the following form
$j=j(\beta_1,\ldots,\beta_k)$, for a given $n$. Also, by $j_i$, $1\leq i\leq k$, we
denote the index $j\in S$ such that
$j_i=j(\alpha_1-1,\alpha_2-1,\ldots,\alpha_i-1,\alpha_{i+1},\alpha_{i+2},\ldots,\alpha_k)$.
It is easy to see that for $1\leq i_1<i_2\leq k$, it holds that
$j_{i_1}>j_{i_2}$ and $|\lambda_{j_{i_1}}|>|\lambda_{j_{i_2}}|$.
Therefore, if $k\geq 3$ we have at least three distinct values among
the eigenvalues $\lambda_{j_1}, \lambda_{j_2},\ldots, \lambda_{j_k}$
and together with the regularity $\lambda_0$ and
$\lambda_1=\mu(n)=0$, we conclude that $X_n$ has at least five
eigenvalues, which is a contradiction. If $k=2$ then it is obvious
that $|S|=3$. These eigenvalues are equal to $\frac{n}{m},
-\frac{n}{m}(p_1-1), -\frac{n}{m}(p_2-1)$, so they are mutually
distinct. Similarly, together with the regularity $\lambda_0$ and
$\lambda_1=\mu(n)=0$, we conclude that $X_n$ has at least five
eigenvalues, which is a contradiction.

Now, suppose that $n$ is a square-free number. If $n$ is prime, then
$X_n$ has exactly two distinct eigenvalues, which is a
contradiction. So, we assume that $n$ has the following prime
factorization $n=p_1\cdots p_k$, for $k\geq 2$. Now, after a calculation we obtain that $t_{n,p_i}=\frac{n}{\gcd(n,p_i)}=\frac{n}{p_i}$ and
$\lambda_{p_i}=(-1)^{k-1}(p_i-1)$,  for $1\leq i\leq k$. Furthermore, for $1\leq i,j\leq
k$ and $i\neq j$, it is clear that $\lambda_{p_i}\neq
\lambda_{p_j}$. If $k\geq 3$,  we have at least three distinct
values among the eigenvalues $\lambda_{p_1}, \lambda_{p_2},\ldots,
\lambda_{p_k}$ and together with the regularity $\lambda_0$ and
$\lambda_1=\mu(n)=(-1)^k$, we conclude that $X_n$ has at least five
eigenvalues, which is a contradiction.

For $k=2$, $n=p_1p_2$ and $\gcd (j,n)=1$, we see that
$t_{n,j}=p_1p_2$ and $\lambda_j=1$. If $\gcd(j,n)=p_1$ then
$t_{n,j}=p_2$ and $\lambda_j=-(p_1-1)$. Similarly, if
$\gcd(j,n)=p_2$ then $\lambda_j=-(p_2-1)$. Thus, we conclude that
together with the regularity $\lambda_0$, $X_n$ has exactly four
distinct eigenvalues.

 \qed
\end{proof}

\begin{theorem}
\label{thm:4 eigenvalues-line of ucg} Let $X_n$ be a unitary Cayley
graph of the order $n$. Then the line graph of $X_n$ has exactly four
eigenvalues if and only if either $n=2p$ or $n=p^{\alpha}$, for some
prime number $p\geq 3$ and $\alpha\geq 2$.
\end{theorem}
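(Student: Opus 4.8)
The plan is to feed $X_n$, which is a connected $\varphi(n)$-regular graph on $n$ vertices, into Sachs' description of line-graph spectra (Theorem~\ref{lg-regular}), thereby reducing the problem to counting the distinct eigenvalues of $X_n$ together with a check of whether $-\varphi(n)$ is one of them. Write $k=\varphi(n)$. First I would dispose of the degenerate cases: $n=1,2$ give a line graph with at most one vertex, hence not four eigenvalues; and for $n\in\{3,4,6\}$ one has $\tfrac{kn}{2}-n=0$ and $X_n\simeq C_n\simeq L(X_n)$, so that $L(X_3)=C_3$, $L(X_4)=C_4$, $L(X_6)=C_6$ have $2$, $3$, $4$ distinct eigenvalues respectively; thus exactly $n=6=2\cdot 3$ survives here.

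For all remaining $n$ (equivalently $k\geq 3$) we have $\tfrac{kn}{2}-n>0$, so by Theorem~\ref{lg-regular} the spectrum of $L(X_n)$ consists of $-2$ with positive multiplicity together with the numbers $k+\lambda-2$ as $\lambda$ ranges over the eigenvalues of $X_n$. The map $\lambda\mapsto k+\lambda-2$ is injective, so if $X_n$ has $d$ distinct eigenvalues then $L(X_n)$ has exactly $d$ of them when $-k$ is an eigenvalue of $X_n$ and exactly $d+1$ otherwise. Consequently $L(X_n)$ has exactly four distinct eigenvalues if and only if either $d=3$ and $-k\notin Sp(X_n)$, or $d=4$ and $-k\in Sp(X_n)$; the values $d=2$ and $d\geq 5$ are ruled out at once.

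It then remains to invoke the two classification results already proved. If $d=3$ then $X_n$ is a connected regular graph with three eigenvalues, hence strongly regular, so Theorem~\ref{thm:sruc} forces $n=p^{\alpha}$ with $\alpha\geq 2$; by~(\ref{sopstvenese za jako regularni unitarni}) its eigenvalues are $p^{\alpha-1}(p-1)$, $0$, $-p^{\alpha-1}$, and $k=p^{\alpha-1}(p-1)$, so $-k\in Sp(X_n)$ exactly when $p=2$, and the requirement $-k\notin Sp(X_n)$ singles out $p\geq 3$. If $d=4$ then Theorem~\ref{thm:ucg-4 eigenvalues} forces $n=p_1p_2$ with $p_1\neq p_2$ primes, whose eigenvalues are $(p_1-1)(p_2-1)$, $1$, $-(p_1-1)$, $-(p_2-1)$ with $k=(p_1-1)(p_2-1)$; here $-k$ equals one of the remaining three (necessarily $-(p_1-1)$ or $-(p_2-1)$) exactly when one of $p_1,p_2$ equals $2$, i.e. $n=2p$ with $p$ an odd prime. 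Combining the three contributions ($n=6$; $n=p^{\alpha}$ with $p\geq 3$, $\alpha\geq 2$; $n=2p$ with $p\geq 5$) yields precisely $n=2p$ or $n=p^{\alpha}$ with $p\geq 3$ prime and $\alpha\geq 2$; since all the invoked facts are equivalences, the converse direction follows along the same lines.

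No serious obstacle is expected: the whole argument rests on the single elementary fact about how the line operator acts on the spectrum of a regular graph. The only place demanding care is the bookkeeping of when $-\varphi(n)$ collides with another eigenvalue of $X_n$ — which is exactly what collapses the eigenvalue count from $d+1$ to $d$ — and making sure the borderline instance $n=6$, where Sachs' formula degenerates because $X_6\simeq L(X_6)$, is absorbed consistently into the ``$n=2p$'' branch.
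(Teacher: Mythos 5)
Your proposal is correct and follows essentially the same route as the paper: reduce via Sachs' theorem (Theorem \ref{lg-regular}) to the classification of $X_n$ with three or four distinct eigenvalues (Theorems \ref{thm:sruc} and \ref{thm:ucg-4 eigenvalues}), and then decide whether the shifted eigenvalue $-\varphi(n)+\varphi(n)-2$ collides with the extra $-2$. Your bookkeeping via the injective shift $\lambda\mapsto k+\lambda-2$ and the explicit treatment of the degenerate cases $n\in\{1,2,3,4,6\}$ is merely a tidier organization of the same argument.
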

\begin{proof}
Suppose that $L(X_n)$ has exactly four distinct eigenvalues. According to
Theorem \ref{lg-regular}, it holds that $X_n$ has either three or
four distinct eigenvalues. Now, suppose that $X_n$ has four distinct
eigenvalues. According to Theorem \ref{thm:ucg-4 eigenvalues}, $n$ is
equal to $p_1p_2$ for some primes $p_1$ and $p_2$. 
Using Theorem \ref{thm:ucg-4 eigenvalues}, we show that the spectrum of $X_n$ is equal to $\{(p_1-1)(p_2-1), -(p_1-1), -(p_2-1),1\}$, for the eigenvalues
$\lambda_i$ of $L(X_n)$, we see $\lambda_i\in\{2(p_1-1)(p_2-1)-2,
(p_1-1)(p_2-2)-2, (p_1-2)(p_2-1)-2, (p_1-1)(p_2-1)-1, -2\}$, according
Theorem \ref{lg-regular}. 
As
$p_1<p_2$, $L(X_n)$ can have four distinct eigenvalues if and only if
$(p_1-2)(p_2-1)-2=-2$ and $p_1=2$.

If $X_n$ is strongly regular, then $n=p^{\alpha}$, for some prime $p$, $\alpha\geq 2$,
the eigenvalues of $X_n$ are $\{p^{\alpha-1}(p-1), 0,
-p^{\alpha-1}\}$ (according to Theorem \ref{thm:sr-line of ucg}).
From Theorem \ref{lg-regular}, we get that any eigenvalue of $L(X_n)$ takes one of the following values
$\{2p^{\alpha-1}(p-1)-2, p^{\alpha-1}(p-1)-2, p^{\alpha-1}(p-2)-2,
-2 \}$. Therefore, $L(X_n)$ has exactly four distinct eigenvalues
only if $p^{\alpha-1}(p-2)-2\neq -2$, i.e. $p\neq 2$.

\qed
\end{proof}

\bigskip

%Similar to what we accomplished in the previous section, 
We will show that the newly founded classes of graphs with four eigenvalues (determined in Theorem \ref{thm:4 eigenvalues-line of ucg}) are not circulants. Specifically, we establish that $L(X_{2p})$ and $L(X_{p^{\alpha}})$ are not circulants, except for $L(X_6)$, where $p\geq 3$ and $\alpha\geq 2$.

\medskip

First, we show that $L(X_{2p})$, $p\geq 3$, is not circulant.
According to Theorem \ref{line graphs and circulants} it is
sufficient to prove that $X_{2p}$, $p\geq 3$, is neither the cycle
$C_{2p}$ (with the exception $C_6$), nor complete bipartite graph
$K_{a,b}$ such that $\gcd(a,b)=1$. If $X_{2p}\simeq C_{2p}$ for some
$p\geq 3$, then these graphs have equal number of the edges, so the
equality $\frac{2p\cdot\varphi(2p)}{2}=2p$ holds, which is satisfied only
for $p=3$. In the second case, if $X_{2p}\simeq K_{a,b}$, from the
equality of the number of edges we obtain that $p(p-1)=ab$, whence
without loss of generality we conclude that $p\mid a$. But from the
equality of the orders of the graphs it follows that $a+b=2p$, and
this is true only for $a=b=p$. This is a contradiction since $a$ and
$b$ must be relatively prime numbers.

\medskip

Now, suppose that $L(X_{p^{\alpha}})$, $p\geq 3$ and $\alpha\geq 2$,
is circulant. If $X_{p^{\alpha}}\simeq C_{p^{\alpha}}$ for some $p\geq 3$
and $\alpha\geq 2$, then these graphs have equal number of the
edges, so the equality
$\frac{p^{\alpha}\varphi(p^{\alpha})}{2}=p^{\alpha}$ holds, which is
never satisfied, because the left hand side is always greater than
the right hand side, for $p\geq 3$ and $\alpha\geq 2$. In the second
case, if $X_{p^{\alpha}}\simeq K_{a,b}$, from the equality of the
number of edges we obtain that $\frac{p^{2\alpha-1}(p-1)}{2}=ab$, whence
without loss of generality we conclude that $p^{2\alpha-1}\mid a$,
since $\gcd(a,b)=1$. From the equality of the orders of the graphs
it follows that $a+b=p^{\alpha}$, which is a contradiction since the
left hand side is greater than $p^{2\alpha-1}$.

\bigskip

By the following theorem we find new classes of regular graphs with
four different eigenvalues.

\begin{theorem}
\label{thm:L2-4 eigens} Let $X_n$ be an unitary Cayley graph of the order
$n$. Then $L^2(X_n)$ has exactly four eigenvalues if and only if $n$
is either prime greater than $3$ or a power of two greater than $4$
or equal to $6$.
\end{theorem}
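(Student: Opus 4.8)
The plan is to reduce everything to counting the distinct eigenvalues of the intermediate graph $L(X_n)$ and then applying Sachs' theorem (Theorem \ref{lg-regular}) a second time, since $L^2(X_n)=L(L(X_n))$ and $L(X_n)$ is again connected and regular. By the observation following Theorem \ref{lg-regular}, the number of distinct eigenvalues of $L(X_n)$ is at most that of $L^2(X_n)$; so if $L^2(X_n)$ has exactly four distinct eigenvalues, then $L(X_n)$ has two, three, or four of them, and I treat these cases separately. If $L(X_n)$ has two distinct eigenvalues it is complete, which — as $L(X_n)$ is regular — forces $X_n\in\{K_2,K_3\}$, i.e.\ $n\in\{2,3\}$, and a direct check shows that neither $L^2(X_2)$ nor $L^2(X_3)=K_3$ has four distinct eigenvalues; so this case contributes nothing.

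Suppose next that $L(X_n)$ has exactly three distinct eigenvalues; being connected and regular, it is then strongly regular, so Theorem \ref{thm:sr-line of ucg} yields that $n$ is a prime greater than $3$ or $n=2^{\alpha}$ with $\alpha\ge 2$. Feeding the three eigenvalues of $L(X_n)$ into Theorem \ref{lg-regular} shows that $L^2(X_n)$ has at most four distinct eigenvalues, and by the observation following Theorem \ref{lg-regular} it has at least three. As $L^2(X_n)$ is connected and regular, Lemma \ref{lem:srg_eig} says it has exactly three distinct eigenvalues iff it is strongly regular, which by Theorem \ref{thm:L^2-sr} happens only for $n=4$. Hence within this case $L^2(X_n)$ has exactly four distinct eigenvalues precisely when $n$ is a prime greater than $3$ or $n=2^{\alpha}$ with $\alpha\ge 3$.

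The main case is when $L(X_n)$ has exactly four distinct eigenvalues; by Theorem \ref{thm:4 eigenvalues-line of ucg} this means $n=2p$ with an odd prime $p$, or $n=p^{\alpha}$ with an odd prime $p$ and $\alpha\ge 2$. Write $k'$ for the degree of $L(X_n)$ and $\mu^{(1)}=k'>\mu^{(2)}>\mu^{(3)}>\mu^{(4)}$ for its distinct eigenvalues. By Theorem \ref{lg-regular}, the distinct eigenvalues of $L^2(X_n)$ are the four pairwise distinct numbers $k'+\mu^{(i)}-2$ ($1\le i\le 4$), together with $-2$ whenever $L(X_n)$ has strictly more edges than vertices. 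Consequently $L^2(X_n)$ has exactly four distinct eigenvalues iff either $\mu^{(4)}=-k'$ — so $k'+\mu^{(4)}-2=-2$ and no fifth value appears — or $-2$ is not an eigenvalue at all, i.e.\ $L(X_n)$ is $2$-regular hence a single cycle; equivalently, iff $L(X_n)$ is bipartite or is a cycle. Now for every $n$ in the two families above except $n=6$ one has $\varphi(n)\ge 3$, so $X_n$ has a vertex of degree at least $3$, which produces a triangle in $L(X_n)$: thus $L(X_n)$ is non-bipartite, and it has degree $2(\varphi(n)-1)>2$ hence is not a cycle, so $L^2(X_n)$ has five distinct eigenvalues. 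The single exception is $n=6$, where $X_6\simeq C_6$, $L(X_6)\simeq C_6$ and $L^2(X_6)\simeq C_6$ has exactly the four eigenvalues $2,1,-1,-2$. (Alternatively, applying Theorem \ref{lg-regular} twice to the explicit spectra of $X_{2p}$ and of $X_{p^{\alpha}}$ and simplifying produces five distinct eigenvalues directly whenever $p\ge 5$, respectively $p\ge 3$.)

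Putting the three cases together gives that $L^2(X_n)$ has exactly four distinct eigenvalues if and only if $n$ is a prime greater than $3$, a power of two greater than $4$, or equal to $6$. The step I expect to be the main obstacle is the four-eigenvalue case: one must decide carefully when the eigenvalue $-2$ created by the outer line operation merges with a shifted eigenvalue $k'+\mu^{(i)}-2$ of $L(X_n)$ rather than producing a fifth value — the bipartiteness/single-cycle dichotomy — and then verify this against the two families delivered by Theorem \ref{thm:4 eigenvalues-line of ucg}.
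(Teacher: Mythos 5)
Your proposal is correct and reaches the theorem's statement, and its skeleton (splitting on the number of distinct eigenvalues of $L(X_n)$ and invoking Theorems \ref{thm:sr-line of ucg}, \ref{thm:4 eigenvalues-line of ucg} and \ref{lg-regular}) is the same as the paper's; but you resolve the two main cases quite differently. The paper computes the explicit eigenvalue lists of $L^2(X_n)$ in every case and checks coincidences by hand: $\{4p-10,3p-10,2p-8,-2\}$ and $\{2^{\alpha+1}-6,3\cdot 2^{\alpha-1}-6,2^{\alpha}-6,-2\}$ when $L(X_n)$ is strongly regular, and the five candidate values for $n=2p$ and $n=p^{\alpha}$ when $L(X_n)$ has four eigenvalues, isolating $p=3$ (i.e.\ $n=6$) in the first family and excluding the second. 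You instead argue structurally: in the three-eigenvalue case you sandwich the count between three and four and quote Theorem \ref{thm:L^2-sr} to rule out ``exactly three'' except at $n=4$, avoiding any computation; and in the four-eigenvalue case you observe that a fifth value fails to appear iff the extra $-2$ merges with $k'+\mu^{(4)}-2$ or has multiplicity zero, i.e.\ iff $L(X_n)$ is bipartite or $2$-regular, and then kill both possibilities (outside $n=6$) by the triangle created at any vertex of degree $\geq 3$ and by $2(\varphi(n)-1)>2$. This relies on the standard fact that a connected $k$-regular graph has $-k$ as an eigenvalue iff it is bipartite, which the paper never uses but which is available in \cite{GoRo01}; you also treat the degenerate ``two distinct eigenvalues of $L(X_n)$'' possibility explicitly, which the paper passes over. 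What each approach buys: yours is shorter and less error-prone because no spectra are compared numerically, while the paper's computation has the side benefit of producing the explicit spectra of $L^2(X_p)$ and $L^2(X_{2^{\alpha}})$, which are reused immediately afterwards in the $L^3$ theorem and in the non-circulancy discussion; your argument would need to be supplemented by those computations at that later point.
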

\begin{proof}
Suppose that $L^2(X_n)$ has exactly four distinct eigenvalues. This means
$L(X_n)$ has either three or four distinct eigenvalues.

If $L(X_n)$ is strongly regular, according to the proof of Theorem
\ref{thm:sr-line of ucg} we conclude
that the spectrum of $L(X_n)$ is
$\{2(p-1)-2,p-4,-2\}$, if $n$ is  prime greater than $3$ and
$\{2^{\alpha}-2, 2^{\alpha-1}-2,-2\}$, if $n$
is a power of two greater than $2$. Furthermore, from Theorem \ref{lg-regular} we see that the eigenvalues of $L^2(X_n)$ are
$\{4p-10, 3p-10,2p-8,-2\}$, if $n$ is  prime greater than $3$ and
$\{2^{\alpha+1}-6,3\cdot 2^{\alpha-1}-6, 2^{\alpha}-6,-2\}$, if $n$
is a power of two greater than $2$. Therefore, $L^2(X_n)$ has four
eigenvalues if $n$ is either prime greater than $3$ or a power of
two greater than $4$.

If $L(X_n)$ has four distinct eigenvalues, then we distinguish two
cases depending on the values of $n$. For $n=2p$, where $p\geq 3$ is
prime, the distinct eigenvalues of $L(X_n)$ are \\$\{2p-4, p-4, p-2,
-2\}$, according to Theorem \ref{thm:4 eigenvalues-line of ucg}.
Thus, every eigenvalue $\lambda_i$ of $L^2(X_n)$ satisfies
$\lambda_i\in\{2(2p-4)-2, 3p-10, 3p-8, 2p-8, -2\}$, whence we
conclude that $L^2(X_n)$ has four distinct values only if $p=3$ and,
that is, for $n=6$.
If $n=p^{\alpha}$, for some prime $p\geq 3$ and $\alpha\geq 2$, then
the eigenvalues of $L(X_n)$ are $\{2p^{\alpha-1}(p-1)-2,
p^{\alpha-1}(p-1)-2, p^{\alpha-1}(p-2)-2, -2 \}$, according to Theorem \ref{thm:4 eigenvalues-line of ucg}.
Therefore, every eigenvalue $\lambda_i$ of $L^2(X_n)$ satisfies
$\lambda_i\in\{4p^{\alpha-1}(p-1)-6, \\3p^{\alpha-1}(p-1)-6,
p^{\alpha-1}(3p-4)-6, 2p^{\alpha-1}(p-1)-6, -2\}$. Any two values
from this set are mutually distinct, since $p>3$, whence we conclude
that $L^2(X_n)$ has not four distinct values.

 \qed
\end{proof}

\begin{theorem}
Let $X_n$ be unitary Cayley graph of order $n$. Then $L^3(X_n)$ has
exactly four eigenvalues if and only if $n=6$.
\end{theorem}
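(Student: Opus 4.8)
The plan is to follow the same pattern as in the proofs of Theorems \ref{thm:L^2-sr} and \ref{thm:L2-4 eigens}. For the ``if'' direction, note that $X_6\simeq C_6$ and that the line graph of a cycle $C_m$ is again $C_m$, so $L^3(X_6)\simeq C_6$, whose distinct eigenvalues are $2,1,-1,-2$ --- exactly four. For the ``only if'' direction, write $L^3(X_n)=L(L^2(X_n))$ and use the observation following Theorem \ref{lg-regular} that the line operator never decreases the number of distinct eigenvalues of a connected regular graph (and $X_n$, hence all its iterated line graphs, is connected and regular for $n\ge 3$; the orders $n\le 2$ give a degenerate $L^2(X_n)$ and are dismissed directly). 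Thus, if $L^3(X_n)$ has exactly four distinct eigenvalues, $L^2(X_n)$ has at most four, and I would split into cases according to that number (which is at least two for $n\ge 3$).

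If $L^2(X_n)$ has exactly two distinct eigenvalues it is complete, so by monotonicity $L(X_n)$ is complete as well, which by the computation in the proof of Theorem \ref{thm:L^2-sr} happens only for $n=3$; then $L^3(X_3)\simeq K_3$ has two distinct eigenvalues. If $L^2(X_n)$ has exactly three distinct eigenvalues it is strongly regular by Lemma \ref{lem:srg_eig}, so Theorem \ref{thm:L^2-sr} forces $n=4$; but then $X_4\simeq C_4$ and $L^3(X_4)\simeq C_4$ has three distinct eigenvalues. Neither subcase yields four.

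It remains to treat $L^2(X_n)$ with exactly four distinct eigenvalues, which by Theorem \ref{thm:L2-4 eigens} means $n$ is a prime greater than $3$, a power of two greater than $4$, or $6$; the order $6$ already appeared above and does satisfy the statement. For $n=p$ prime and $n=2^{\alpha}$ I would take the spectra of $L^2(X_n)$ computed inside the proof of Theorem \ref{thm:L2-4 eigens}, namely $\{4p-10,\,3p-10,\,2p-8,\,-2\}$ with regularity $4p-10$, and $\{2^{\alpha+1}-6,\,3\cdot 2^{\alpha-1}-6,\,2^{\alpha}-6,\,-2\}$ with regularity $2^{\alpha+1}-6$, and apply Theorem \ref{lg-regular}: the eigenvalues of $L^3(X_n)$ are $-2$ together with $k+\lambda_i-2$, where $k$ is the regularity of $L^2(X_n)$ and $\lambda_i$ runs over its eigenvalues. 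This produces the candidate values $\{8p-22,\,7p-22,\,6p-20,\,4p-14,\,-2\}$ when $n=p$, and $\{2^{\alpha+2}-14,\,7\cdot 2^{\alpha-1}-14,\,3\cdot 2^{\alpha}-14,\,2^{\alpha+1}-10,\,-2\}$ when $n=2^{\alpha}$. I would then verify that the five entries of each list are pairwise distinct in the admissible range: each potential coincidence reduces to a linear equation in $p$, respectively in $2^{\alpha}$, whose solutions are excluded (no prime $p>3$, respectively no $\alpha\ge 3$). Hence $L^3(X_n)$ has five distinct eigenvalues in both families, and $n=6$ remains as the only order producing exactly four.

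The only real difficulty is the bookkeeping in the last step: reading off the (up to five) candidate eigenvalues supplied by Theorem \ref{lg-regular} and checking that they do not collapse for $p>3$ or $\alpha\ge 3$ --- in particular that the ``line-graph'' eigenvalue $-2$ is attained and stays separate from the others. Everything else is a direct application of Lemma \ref{lem:srg_eig} together with Theorems \ref{lg-regular}, \ref{thm:L^2-sr} and \ref{thm:L2-4 eigens}.
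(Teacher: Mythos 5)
Your proposal is correct and follows essentially the same route as the paper: reduce via Theorem \ref{lg-regular} to the possible spectra of $L^2(X_n)$, invoke Theorems \ref{thm:L^2-sr} and \ref{thm:L2-4 eigens} to restrict $n$, and then check that the candidate eigenvalue lists $\{8p-22,7p-22,6p-20,4p-14,-2\}$ and $\{2^{\alpha+2}-14,7\cdot 2^{\alpha-1}-14,3\cdot 2^{\alpha}-14,2^{\alpha+1}-10,-2\}$ do not collapse, leaving only $n=6$. Your explicit handling of the case where $L^2(X_n)$ has two distinct eigenvalues (hence is complete, forcing $n=3$) is a small tidiness improvement over the paper, which folds that case away implicitly, but it is not a different argument.
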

\begin{proof}
Suppose that $L^3(X_n)$ has four distinct eigenvalues.
According Theorem \ref{lg-regular}, the graph $L^2(X_n)$ is strongly regular or its spectrum has four distinct eigenvalues.

If
$L^2(X_n)$ is strongly regular, using Theorem \ref{thm:L^2-sr} we have
that $X_n\simeq X_4\simeq C_4$ and $L^k(X_4)\simeq C_4$ (for $k\geq 1$), which
is strongly regular. Using Theorem \ref{thm:L2-4 eigens}, if $L^2(X_n)$ has four different eigenvalues
then either $n$ is prime greater than $3$ or $n$ is a power of $2$
greater than $4$ or $n=6$.

For prime $p>3$, according to the proof of Theorem \ref{thm:L2-4
eigens}, the spectrum of $L^2(X_n)$ is equal to $\{4p-10, 3p-10,2p-8,-2\}$. Moreover, 
every eigenvalue $\lambda_i$ of $L^3(X_n)$ satisfies
$\lambda_i\in\{8p-22, 7p-22, 6p-20, 4p-14, -2\}$. Any two values
from this set are mutually distinct, since $p>3$, whence we conclude
that $L^3(X_p)$ has five distinct eigenvalues.

For $n=2^{\alpha}$ and $\alpha\geq 3$,
according to the proof of
Theorem \ref{thm:L2-4 eigens}, 
the spectrum of $L^2(X_n)$ is equal to $\{2^{\alpha+1}-6, 3\cdot 2^{\alpha-1}-6,2^{\alpha}-6,-2\}$.
furthermore, 
every eigenvalue $\lambda_i$ of
$L^3(X_n)$ satisfies $\lambda_i\in\{2^{\alpha+2}-14,
7\cdot2^{\alpha-1}-14, 3\cdot 2^{\alpha}-14, 2^{\alpha+1}-10, -2\}$.
Any two values from this set are mutually distinct, since
$\alpha\geq 3$, whence we conclude that $L^3(X_n)$ has five distinct eigenvalues.

For $n=6$, according to the proof Theorem  \ref{thm:L2-4 eigens} and Theorem \ref{lg-regular} we see that
the both spectra of the graphs $L^2(X_6)$ and $L^3(X_6)$ are equal to $\{2,-1,1,-2\}$.
 \qed
\end{proof}

It can be easily seen that
the graph $X_6\simeq C_6$ is the only one that can be found at the intersection of the classes mentioned in the Theorems \ref{thm:4 eigenvalues-line of ucg} and \ref{thm:L2-4 eigens}.
%It can be easily seen that the intersection of the classes that are
%found in Theorems \ref{thm:4 eigenvalues-line of ucg} and
%\ref{thm:L2-4 eigens} is the only graph $X_6\simeq C_6$. 
Indeed, by
Theorem \ref{thm:4 eigenvalues-line of ucg} we found two classes of graphs that have four distinct values in the spectra:
$L(X_{2p})$, for prime $p\geq 3$, with the set of the eigenvalues
$\{2p-4, p-4, p-2, -2\}$ and $L(X_{p^{\alpha}})$, for prime $p\geq
3$ and $\alpha\geq 2$, with the set of the eigenvalues
$\{2p^{\alpha-1}(p-1)-2, p^{\alpha-1}(p-1)-2, p^{\alpha-1}(p-2)-2,
-2 \}$. On the other hand, by Theorem \ref{thm:L2-4 eigens} we also
found two classes:  $L^2(X_p)$, for prime $p>3$, with the set of the
eigenvalues $\{4p-10, 3p-10,2p-8,-2\}$ and $L^2(X_{2^{\alpha}})$,
for $\alpha\geq 3$, with the set of the eigenvalues
$\{2^{\alpha+1}-6,3\cdot 2^{\alpha-1}-6, 2^{\alpha}-6,-2\}$. If
there are graphs $G_1$ and $G_2$ that belong to the classes obtained by
Theorem \ref{thm:4 eigenvalues-line of ucg} and Theorem
\ref{thm:L2-4 eigens}, respectively, such that $G_1\simeq G_2$, then
they must be cospectral. First, we conclude that $G_1$ has two odd and two even, or three even and one odd
distinct values in its spectrum. On the other hand, 
$G_2$ has four even, or three even and one odd
distinct values in its spectrum.
By comparing the parity of the distinct values in the spectra of
$G_1$ and $G_2$,
we conclude that they have three even and one odd
distinct values in their spectra. This means that
there exit primes
$p\geq 3$, $q>3$ and integer $\alpha\geq 2$ such that $G_1\simeq
L(X_{p^{\alpha}})$ and  $G_2\simeq L^2(X_q)$. In this case, the
regularity and the only odd eigenvalues of the graphs $G_1$ and
$G_2$ must be equal, so the following equalities hold
$2p^{\alpha-1}(p-1)-2=4q-10$ and $ p^{\alpha-1}(p-2)-2=3q-10$. By the substraction of the second
equation from the first, we obtain that $p^{\alpha}=q$, which is
never satisfied, so we obtain a contradiction.

\bigskip

We can show that the classes
of the graphs $L^2(X_p)$ and $L^2(X_{2^{\alpha}})$ (for prime $p>3$
and integer $\alpha\geq 3$), obtained in Theorem
\ref{thm:L2-4 eigens}, are not circulants. According to Theorem
\ref{line graphs and circulants}, we easily see that the classes
$L(X_p)$ and $L(X_{2^{\alpha}})$ neither can be the cycles nor
complete bipartite graphs. Indeed, the  eigenvalues of $L(X_p)$
and $L(X_{2^{\alpha}})$ are $\{2p-4, p-4, -2\}$ and $\{2^{\alpha}-2,
2^{\alpha-1}-2, -2\}$, respectively, and all of them are integers
not equal to $0$. On the other hand, complete bipartite graph of
the order $\frac{p(p-1)}{2}$ has $0$ as an eigenvalue and the cycle
$C_{\frac{p(p-1)}{2}}$ has the irrational eigenvalue $2\cos (\frac{4\pi}{p(p-1)})$,
for $p>3$.

\medskip

\section{Concluding remarks}

In this paper, we establish novel classes of strongly regular graphs and graphs with spectra that comprise four distinct eigenvalues. These classes encompass both circulant and non-circulant connected graphs.
We achieve this by employing specific constructions based on graph operations such as line operations, tensor product, and complement, starting from graphs that possess two or three distinct eigenvalues in their spectra. We further characterize the class of circulant graphs with prime order, including both strongly regular graphs and graphs with spectra exhibiting four distinct eigenvalues.
These findings represent an advancement in the study of characterizing strongly regular circulant graphs, which was initially initiated in \cite{ChRaGu12} and further extended in \cite{Ba22}. It has been proven that circulant graphs with integral spectra that are strongly regular must have composite order. Moreover, it has been noted that the task of classifying the class of integral circulant graphs with four distinct eigenvalues is considerably more challenging compared to characterizing strongly regular integral circulant graphs. This classification likely involves examination of a significantly larger number of cases.
Some proof demonstrations require thorough examination, specifically when dealing with the characterization of circulant graphs possessing three or four eigenvalues. 

However,
the problem of finding a characterization of circulant graphs with four distinct eigenvalues
is still interesting if the discussion is restricted to some special classes of circulant graphs of prescribed order.
Therefore, we believe that it is worthwhile to carry out further investigation in the class of circulant graphs
which are incident graphs of some symmetric block-designs (these graphs have four distinct eigenvalues, as elaborated in \cite{GoRo01}).
Moreover, we think that an effective approach for characterizing such graphs lies in utilizing the statement that a graph is the incident graph of a symmetric block-design if and only if it is a distance-regular graph with a diameter of three \cite{GoRo01}. The classification of all circulant graphs with a prescribed diameter of three represents the most challenging aspect of the proof.  
To illustrate this assertion, we can refer to the proof of Theorem 14 in \cite{Ba15}, which encompasses numerous cases considering very  specific classes of integral circulant graphs.
Moreover, finding graphs with maximal diameter in the class of integral circulant graphs with a prescribed order is a more demanding problem, partially addressed in \cite{Ba23}.  
However, it is possible to characterize certain subclasses of circulant graphs which are incident graphs of symmetric block designs. Specifically, it has been proven that the unitary Cayley graph $X_n$ is the incidence graph of a symmetric block design if and only if $n=2p$, for some $p>2$ (the parameters $(p,p-1,p-2)$ determine the symmetric block design).

 \bigskip {\bf Acknowledgment. }

This research was supported by the research project of the Ministry of Education, Science and Technological Development of the Republic of Serbia (number 451-03-47/2023-01/ 200124).


\begin{thebibliography}{99}

%\bibitem{Gu01}
%    I. Gutman,
%    \textit{The energy of a graph: old and new results},
%    Algebraic Combinatorics and Applications, Springer Berlin, 2001, 196--211.

%\bibitem{StIl09}
%    D. Stevanovi\' c, A. Ili\' c,
%    \textit{Distance spectral radius of trees with fixed maximum degree},
%    Electron. J. Linear Algebra 20 (2010), 168--179.

%\bibitem{Ba13}
%    M. Ba\v si\'c, \textit{Characterization of circulant graphs having perfect state transfer},
%     Quantum Inf. Process., 12 (2013), 345--364.

\bibitem{Ba14} 
M. Ba\v si\'c, \textit{Which weighted circulant networks have perfect state transfer?},
Inf. Sciences, Volume 257, (2014), 193--209.

\bibitem{Ba15} 
M. Ba\v si\'c, \textit{Perfect state transfer between non-antipodal vertices in integral circulant graphs},
Ars Comb., 122 (2015), 65--78.


\bibitem{Ba22} 
M. Ba\v si\'c, \textit{Characterization of strongly regular integral circulant graphs by spectral approach},
Appl. Anal. Discrete Math., Volume 16, (2022) 288--306.

\bibitem{Ba23}
M. Ba\v si\'c, A. Ili\'c, A. Stamenkovi\'c, \textit{Maximal diameter of integral circulant graphs}, Inf. Comput,
accepted for publication (2023), https://arxiv.org/abs/2307.09081.

%\bibitem{BaIl09}
%    M. Ba\v si\'c, A. Ili\' c,
%    \textit{On the clique number of integral circulant graphs},
%    Appl. Math. Letters, 22 (2009), 1406--1411.


%\bibitem{BaIl11}
%    M. Ba\v si\' c, A. Ili\' c,
%    \textit{On the automorphism group of integral circulant graphs},
%    Electron. J. Combin. 18 (2011), \#R68

\bibitem{BrHo12}
J. Brown, R. Hoshino, \textit{Line graphs and circulants}, Ars
Combinatoria, 105 (2012), 463--476.

\bibitem{ChRa18}
T. Chelman, S. Raja, \textit{Integral circulant graphs with four distinct eigenvalues},
Discrete Math Algorithms Appl., 10(5) (2018) \#1850062.


\bibitem{ChRaGu12}
T. T. Chelvam, S. Raja, I. Gutman, \textit{Strongly regular integral
circulant graphs and their enegies}, Bull. Inter. Math. Virtual
Inst.   (2012), 9--16.


\bibitem{Cox89}
D. A. Cox, \textit{Primes of the form $x^2 + n y^2$}, New York: Wiley
(1989).

\bibitem{Cv80}
D. Cvetkovi\'c, \textit{A note on construction of graphs by means of
their spectra}, Publications de L'Institut Mathematiquh,  27 (41) (1980),  27--30.

%\bibitem{CvDoSa80}
%D. Cvetkovic, M. Doob, H. Sachs, \textit{Spectra of Graphs: Theory
%and Application}, Academic Press, New York, (1980).


\bibitem{Da95}
E. R. van Dam, \textit{Regular Graphs With Four Eigenvalues}, Linear
Algebra Appl., 226--228 (1995), 139--162.

\bibitem{Da98}
E. R. van Dam, \textit{Nonregular Graphs with Three Eigenvalues},
Journal of Combinatorial Theory, Series B 73(2) (1998), 101--118.

\bibitem{Davis70}
P. J.  Davis, \textit{Circulant Matrices}, Wiley, New York, (1970).


\bibitem{DeDe95}
J. B. Dence, T. P. Dence, \textit{Cubic and quadratic residues modulo a prime}, Mo. J. Math. Sci.,
7(2), (1995).

\bibitem{Ge11}
Y. Ge, B. Greenberg, O. Perez, C. Tamon, \textit{Perfect state transfer, graph products, equitable partitions}, Int J Quantum Inf., 9 (3) (2011), 823--842.


\bibitem{GoRo01}
C. Godsil, G. Royle, \textit{ Algebraic graph theory},
Springer-Verlag New York (2001).

\bibitem{GeSa99}
J.C. George, R.S. Sanders, \textit{When is a Tensor Product of Circulant Graphs
Circulant?}, 	arXiv:math/9907119, (1999).

\bibitem{HardyWright}
G.H. Hardy, E.M. Wright, D.R. Heath-Brown, J.H. Silverman \textit{An introduction to the Theory of
Numbers}, 6th ed, Oxford University Press (2008).



\bibitem{Hasse50}
H. Hasse, \textit{Vorlesungen über Zahlentheorie}, Springer-Verlag Berlin (1950).


%\bibitem{Hil05}
%A.J. Hildebrand, \textit{Introduction to Analytic Number Theory},
%http://www.math.uiuc.edu/
%hildebr/ant/main1.pdf, (2005).

\bibitem{Hwang03}
F.K. Hwang, \textit{A survey on multi-loop networks}, Theor Comput
Sci. 299 (2003), 107--121.

%\bibitem{Il09}
%    A. Ili\' c,
%    \textit{The energy of unitary Cayley graphs},
%    Linear Algebra Appl. 431 (2009), 1881--1889.


%\bibitem{Il10}
   % A. Ili\' c,
    %\textit{Distance spectra and Distance energy of Integral Circulant Graphs},
    %Linear Algebra Appl. 433 (2010), 1005--1014.


%\bibitem{IlBa10}
    %A. Ili\' c, M Ba\v si\' c,
    %\textit{On the chromatic number of integral circulant graphs},
    %Comp. Math. Appl. 60 (2010), 144--150.

%\bibitem{Re13}
%A. Satyanarayana Reddy, \textit{Adjacency Algebra of Unitary Cayley
%Graph}, Journal of Global Research in Mathematical Archives 1(1)
%(2013), 77--84.


\bibitem{KlSa07}
    W. Klotz, T. Sander,
    \textit{Some properties of unitary Cayley graphs},
    Electron. J. Combin. 14 (2007), \#R45.

%\bibitem{Ma89}
%D. Maru\v si\'c, \textit{Strong regularity and circulant graphs},
%Discrete Mathematics 78 (1989) 119--125.

\bibitem{Sa67}
H. Sachs,  \textit{\"{U}ber Teiler, Faktoren und charakteristische
Polynome von Graphen}, Teil II, Wiss. Z. TH Ilmenau 13 (1967),
405--412.

\bibitem{KlSa10}
W. Klotz, T. Sander, \textit{Integral Cayley graphs over abelian groups}, Electron. J. Comb., 17 (2010)
\#R81.


\bibitem{SaSa15}
J.W. Sander, T. Sander, \textit{On So's conjecture for integral circulant graphs},
Appl. Anal. Discrete Math., Volume 9, (2015) 59--72.
%\bibitem{Tu01}
% W. T. Tutte, \textit{Graph Theory}, Cambridge University Press,
 %(2001).

\bibitem{wasin}

W. So, \textit{Integral circulant graphs}, Discrete Math. 306
(2006), 153--158.

\end{thebibliography}
\end{document}